\def\ps@pprintTitle{%
	\let\@oddhead\@empty
	\let\@evenhead\@empty
	\def\@oddfoot{}%
	\let\@evenfoot\@oddfoot}
\newcommand{\ii}{{\vec{i}}}
\newcommand{\tightindex}{( { j_1^{\ii}, j_2^{\ii}, \dots, j^{\ii}_{l^{\ii}} }) }
\newcommand{\tightpower}{( {\alpha_1^{\ii}, \alpha_2^{\ii}, \dots, \alpha^{\ii}_{l^{\ii}} }) }
\newcommand{\R}{\mathbb{R}}
\newcommand{\dsum}{\displaystyle \sum}
\newcommand{\newi}{\mathbbm{i}}
\newcommand{\card}{\operatorname{card}}
\newcommand{\tr}{^\intercal}
\begin{document}

\title{A new characterization of symmetric $H^+$-tensors and $M$-tensors\thanks{This research did not receive any specific grant from funding agencies in the public, commercial, or not-for-profit sectors.}
	}



\author{Xin Shi         \and
        Luis F. Zuluaga 
}


\institute{X. Shi  \at
              Department of Industrial and Systems Engineering, P.C. Rossin College of Engineering \& Applied Science, Lehigh University, PA, 18015 \\
              \email{shi321xin@gmail.com}           
           \and
            L. F. Zuluaga \at
              Department of Industrial and Systems Engineering, P.C. Rossin College of Engineering \& Applied Science, Lehigh University, PA, 18015 \\
\email{luis.zuluaga@lehigh.edu}           
}

\date{Received: date / Accepted: date}

\maketitle

\begin{abstract}
In this work, we present a new characterization of symmetric $H^+$-tensors, also referred as generalized diagonally dominant tensors with nonnegative diagonals. Namely, by exploring their diagonal dominance property, we derive new necessary and sufficient conditions for a symmetric tensor to be an $H^+$-tensor. Based on these conditions, we propose a novel method that allows to identify if a tensor is a symmetric $H^+$-tensor in polynomial time, by solving a power cone optimization problem. Further, we show how this result can be used to efficiently compute the minimum $H$-eigenvalue of symmetric $M$-tensors and to provide tighter lower bounds for the minimum $H$-eigenvalue of the Fan product of two symmetric $M$-tensors. Throughout the article, numerical experiments are used to benchmark and illustrate the applications of our results.
\keywords{$H^+$-tensors \and Generalized Diagonally Dominant tensors \and Power Cone Optimization \and Minimum $H$-eigenvalues}
\subclass{15A69 \and 65F15}
\end{abstract}

\section{Introduction}
\label{sec:introduction}
Tensors can be regarded as a high-order generalization of matrices and they arise in applications in many disciplines of science, engineering and social sciences~\citep[see, e.g.][]{landsberg2012tensors}.
For $m,n \in \mathbb{N}$, an $m$-order $n$-dimensional real tensor is a multidimensional array with the form
\[
\mathcal{A} = (a_{i_1 i_2 \dots i_m}), \quad a_{i_1i_2 \dots i_m} \in \R, \quad 1 \leq i_1 ,i_2,\dots, i_m \leq n.
\]
Matrices are tensors with order $m=2$. Denote $ \mathbb{T}_{m,n}$ as the space of all real tensors with order$~m$ and dimension $n$. Then 
\begin{equation*}
	\mathbb{T}_{m,n} = \underbrace { \R^{n} \otimes \R^{n}  \otimes \cdots \otimes \R^{n} }_{m},
\end{equation*}
where $\otimes $ is the outer product. Denote $[n] = \{1,2,\dots, n\}$. Tensor $\mathcal{A} = (a_{i_1  \dots i_m}) \in  \mathbb{T}_{m,n}$ is called {\em symmetric} if its entries~$a_{i_1  \dots i_m} $ are invariant under any permutation of $(i_1, \dots, i_m)$ for~$i_j \in [n], j \in [m]$. Denote $\mathbb{S}_{m,n}$ as the set of symmetric tensors in $ \mathbb{T}_{m,n}$. The entries~$a_{ii\dots i}$ for any $i \in [n]$ are called {\em diagonal elements (or entries)} of $\mathcal{A}$. 

Following~\citep{cartwright2013number,lim2005singular,qi2005eigenvalues}, for $\mathcal{A} \in \mathbb{T}_{m,n}$, $\lambda \in \mathbb{C}$ is called an {\em eigenvalue} of $\mathcal{A}$, if there exists an {\em eigenvector} $x \in \mathbb{C}^n\backslash \{0 \}$ such that $\mathcal{A}x^{m-1}= \lambda x^{[m-1]}$, where $\mathcal{A}x^{m-1} \in \mathbb{C}^n$ is defined by
\begin{flalign*}
	(\mathcal{A}x^{m-1} )_i =  \sum_{i_2,\dots, i_m = 1}^n a_{ii_2 \dots i_m } x_{i_2}\cdots x_{i_m},
\end{flalign*}
and $x^{[m-1]}\in\mathbb{C}^n \backslash \{0 \}$ is defined by $(x^{[m-1]})_i = x_i^{m-1}$ for all $i \in [n] $. In particular, if $x$ is real, then~$\lambda$ is also real. In this case, we say that $\lambda$ is an {\em H-eigenvalue} of~$\mathcal{A}$.

The {\em comparison tensor} of $\mathcal{A} \in \mathbb{T}_{m,n}$, denoted as $M(\mathcal{A})$, is defined in~\citep{ding2013m,kannan2015some} as follows:
\begin{equation}
	\label{eq:comptensor}
	M(\mathcal{A})_{i_1  \dots i_m} = \begin{cases}
		|a_{i_1  \dots i_m}| & \text{if } i_1 = \cdots = i_m, \\
		- |a_{i_1 \dots i_m}| & \text{otherwise}.
	\end{cases}
\end{equation}
Following~\citep{ding2013m,kannan2015some}, we introduce the next classes of tensors. 
A tensor is called a {\em nonnegative tensor} if all its entries are nonnegative and a tensor is called a {\em diagonal tensor} if all its off-diagonal elements are zero. A tensor~$\mathcal{A}  \in \mathbb{T}_{m,n}$ is said to be a {\em $Z$-tensor} if there exists a nonnegative tensor~$\mathcal{D}  \in \mathbb{T}_{m,n}$ and a nonnegative scalar~$s$ such that $\mathcal{A} = s\mathcal{I} - \mathcal{D}$, where $\mathcal{I}  \in \mathbb{T}_{m,n}$ is a diagonal tensor with all diagonal elements equal to one. For tensor $\mathcal{A}$, denote~$\rho(\mathcal{A})$ as the largest modulus of its eigenvalues. A~$Z$-tensor~$\mathcal{A} = s\mathcal{I} - \mathcal{D}$ is said to be an {\em $M$-tensor} if $s \geq \rho(\mathcal{D})$. If $s > \rho(\mathcal{D})$, then $\mathcal{A}$ is called a {\em strong $M$-tensor}. A tensor is called an {\em $H$-tensor} if its comparison tensor is an $M$-tensor. From the definition of $M$-tensors, the diagonal elements of an $M$-tensor are always nonnegative and the off-diagonal elements are always nonpositive. Thus, the comparison tensor of an $M$-tensor is always itself. If a tensor is an $M$-tensor, then it is also an $H$-tensor with nonnegative diagonal elements. A tensor is called a {\em strong $H$-tensor} if its comparison tensor is a strong $M$-tensor. An $H$-tensor with nonnegative diagonal elements is called an {\em $H^+$-tensor}. The definition of $H^+$-tensors constitutes a natural generalization of the concept of $H^+$-matrices, as introduced in~\citep{boman2005factor} and it matches the definition used in~\citep{luo2015doubly} and~\citep{kannan2015some} where they are referred as $H$-tensors with nonnegative diagonals. Also, as we will discuss later (see Corollary~\ref{coro:gdd_h} and Definition~\ref{def:dd_sdd}), $H$-tensors are equivalent to {\em generalized diagonally dominant tensors}~\citep[][Thm.~4.9]{kannan2015some}. Thus this definition of $H^+$-tensors also matches with the set of generalized diagonally dominant tensors with nonnegative diagonals in \cite{ahmadi2019dsos}. It is worth noting that in~\citep{wang2021accelerated,wang2020global} an alternative definition is used, which does not include $H$-tensors with $0$ elements in the diagonal. However, our results on identifying $H$-tensors with nonnegative diagonals can be straightforwardly applied to detect these tensors as well.

A symmetric tensor is an $H$-tensor if and only if it is a generalized diagonally dominant tensor~\citep[][Thm.~4.9]{kannan2015some}. The matrix version (i.e., when $m$ = 2) of this result is proved in~\citep[][Thm.~8]{boman2005factor} and~\citep{varga1963matrix}.
As a result, it follows that 
a symmetric matrix is an~$H^+$-matrix if and only if it can be written as the sum of a number of positive semidefinite matrices which have a special sparse structure~\citep{boman2005factor}. 
From this fact, it follows
that symmetric~$H^+$-matrices can be identified in polynomial time by solving a {\em second-order cone optimization}~\citep[see, e.g.,][]{lobo1998applications, ahmadi2019dsos}.


$M$-tensors and $H$-tensors have emerged as crucial tools across diverse mathematical and engineering fields, including  hypergraph analysis~\citep{fan2019eigenvectors,sun2019spectral,galuppi2023spectral}, tensor complementarity problems~\citep{huang2017formulating,luo2017sparsest}, multilinear systems~\citep{luo2017sparsest,li2015solving,ding2016solving,wang2019neural}, optimal control problems~\citep{azimzadeh2019high}, high-order Markov chains~\citep{li2014limiting,liu2018tensor}, and as discussed in detail in Example~\ref{example:pop}, have the potential to impact results in polynomial optimization. Next, we provide a brief overview of  these areas of applications.

\begin{itemize}
		\item The Laplacian tensor of a hypergraph is an $M$-tensor. Researchers are actively investigating the spectral properties of hypergraphs by leveraging the properties of $M$-tensors~\citep{sun2019spectral}. Specifically, the chromatic number of a hypergraph is bounded using the largest $H$-eigenvalue of the adjacency tensor~\citep{sun2019spectral,cooper2012spectra}, which can be determined using the methods proposed in this work. The analysis of properties of hypergraphs, such as their chromatic number, arises when modeling problems in areas as varied as 
		informatics, transportation, molecular biology, and telecommunications, to name just a few~\citep[see, e.g.,][]{bretto2013applications,zhang2016radio}. In Example~\ref{example:hypergraph}, we demonstrate how our method can be used to bound the chromatic number of a hypergraph.
			\item Tensor complementarity problems arise in diverse domains, including DNA microarrays, communication systems, and $n$-person non-cooperative games. Research has shown that solutions to tensor complementarity problems involving $M$-tensors and $H$-tensors exhibit desirable properties~\citep{wang2020global,luo2017sparsest}. In Example~\ref{example:tan}, we showcase our method's ability to identify $M$-tensors in a given problem, which allows us to leverage specialized algorithms to efficiently solve  tensor complementarity problems.
	\item While analyzing the existence of solutions for general multilinear systems presents significant challenges, the authors in~\citep{ding2016solving,wang2019existence} have demonstrated the existence of specific solutions for systems involving $M$-tensors and $H$-tensors. Furthermore, numerous efficient solution methods have been developed for such systems~\citep{wang2019neural,wang2020preconditioned}. Efficiently characterizing $M$-tensors and $H$-tensors facilitates the efficient solution of multilinear systems. In Example~\ref{example:pdf}, we illustrate an application of our method to recognize $M$-tensors in a multilinear system, enabling the use of specific algorithms to solve the system.
	\item For high-order Markov chain models, the transition probability tensors are nonnegative tensors. The problem of determining the limiting probability vectors of these tensors can be addressed by solving a nonlinear equation with $M$-tensors. The tensor splitting method proposed in~\citep{liu2018tensor} offers an effective approach to solve such equations. In Example~\ref{example:markov_chain}, we apply our proposed method to two real-world Markov chain models, demonstrating its effectiveness in obtaining their limiting probability vectors with the tensor splitting method proposed in~\citep{liu2018tensor}.
	\item Even order symmetric $H^+$-tensors define globally nonnegative polynomials~\citep{chen2015sos}. 
A recent trend in {\em polynomial optimization}~\citep{lasserre2015introduction} is the derivation of approaches to approximate polynomial optimization problems without the need to use sum of squares polynomials (SOS)~\citep[see, e.g.,][]{kuryatnikova2024reducing,ahmadi2019dsos}. This is mainly motivated by the prohibitively high computational effort needed to 
solve the semidefinite optimization problems associated with the use of SOS polynomials. As detailed in Example~\ref{example:pop}, from the results in~\citep{kuryatnikova2024reducing}, it follows that one can construct (convex) power cone optimization-based hierarchies to approximate any polynomial optimization problem with a compact feasible set. This approximation approach demonstrates the potential impact that our results can have in addressing the solution of practically relevant polynomial optimization problems in fields such as statistics and machine learning, derivative pricing, and control theory~\citep{ahmadi2019dsos}. 
\end{itemize}



In this work we generalize the results on symmetric $H^+$ matrices to symmetric $H^+$-tensors. Namely, we prove that a symmetric tensor is an~$H^+$-tensor if and only if it can be written as the sum of a number of tensors which have a special sparse structure (Theorem~\ref{theo:gdd_decomp}). Based on this, we obtain a novel characterization of symmetric $H^+$-tensors (Theorem~\ref{theo:gdd_characterization})  using {\em conic optimization}~\citep[see, e.g.,][]{wright1997primal} techniques. In particular, we show  
that symmetric~$H^+$-tensors can be identified in polynomial time
(Corollary~\ref{cor:F(A)} and~\eqref{eq:F_A_long})
by solving a {\em power cone optimization}~\citep[see, e.g.,][]{chares2009cones,hien2015differential} problem.

Many efforts have been made to characterize $H$-tensors~\citep[see, e.g.,][]{huang2019iterative,li2014criterions,li2017programmable,liu2017iterative,wang2017new, zhang2016h,zhao2016criterions,sun2020new,huang2019some,liu2020iterative}, by providing sufficient conditions for a tensor to be an $H$-tensor. While these algorithms can identify $H^+$-tensors, there remain $H^+$-tensors that elude detection through these methods. An exception is found in~\citep{luan2019new}, which leverages spectral theory to derive necessary and sufficient conditions for strong $H$-tensors and introduces an iterative algorithm for their identification with linear convergence. On the other hand, our approach 
allows us to take advantage of interior point algorithms for power cone optimization, which operate in polynomial time, and achieve at least a linear convergence rate~\citep{chares2009cones}, to detect symmetric $H^+$-tensors. Furthermore, the sufficient and necessary conditions we present for a symmetric tensor to be an $H^+$-tensor are derived by examining their diagonal dominance properties. This characterization not only facilitates the identification of symmetric $H^+$-tensors but also allows for direct optimization over the set of symmetric $H^+$-tensors. Next, we discuss some advantages of this type of characterization. For that purpose, let us revisit in a bit more detail, one of the applications mentioned earlier.

Consider the problem of computing the minimum $H$-eigenvalue of symmetric $M$-tensors, which plays an important role in a wide range of interesting applications~\citep[see,][and the references therein]{huang2018some}. 
Recent literature~\citep{he2014inequalities,huang2018some,li2013new,tian2010inequalities} focuses on obtaining bounds on the minimum $H$-eigenvalue of $M$-tensors. Our characterization can instead  compute the minimum $H$-eigenvalue of symmetric $M$-tensors in polynomial time by solving a power cone optimization problem (Corollary~\ref{cor:Meigen}). Not surprisingly, the values obtained in this way tighten the bounds provided in~\citep{he2014inequalities,huang2018some,li2013new,tian2010inequalities} (Table~\ref{tab:bounds}). Further, the values are computed in a time faster than the one required to compute the minimum $H$-eigenvalue of $M$-tensors with a more general algorithm~\citep{chen2016computing} that can be used for this purpose (Table~\ref{tab:comparison2}).
To illustrate the practical applications of these results, we show (Example~\ref{example:hypergraph}) how to obtain an upper bound on the chromatic number of a {\em hypergraph}~\citep[see, e.g.,][]{cooper2012spectra} by computing the minimum $H$-eigenvalue of its associated transformed {\em adjacency tensor}~\citep[see, e.g.,][]{chang2013survey}. 
As additional applications, in Examples~\ref{example:tan}, \ref{example:pdf}, and \ref{example:markov_chain}, we demonstrate how computing the minimum $H$-eigenvalues of $Z$-tensors enables us to determine whether specialized methods can be applied to find the sparsest solution of a tensor complementarity problem, obtain limiting probability vectors of high-order Markov chains, or, more generally, solve multilinear systems of equations.

Further, 
consider the problem of finding  the minimum $H$-eigenvalue of the {\em Fan product}~\citep{fan1964inequalities} of two symmetric $M$-tensors. 
One of the main characteristics of this product is that the Fan product of $M$-tensors is also an $M$-tensor~\citep{shen2019some}. Some bounds for the minimum $H$-eigenvalue of the Fan product of $Z$-matrices ($Z$-tensors) are proposed in~\citep{cheng2014new,fang2007bounds,shen2019some}. 
Our characterization can be used to obtain bounds that are theoretically and empirically tighter than any of the bounds provided in~\citep{shen2019some} (Table \ref{tab:fan_prod}).

The remaining of the article is organized as follows: Section \ref{part:pre} introduces additional notation, definitions and some basic results. In Section \ref{part:dd_sdd}, the characterizations of symmetric~$H^+$-tensors are presented. With these characterizations, we provide a way to identify if a tensor is a symmetric $H^+$-tensor in polynomial time. In Section \ref{sec:Meigenvalue}, we show how to obtain the minimum $H$-eigenvalue of a symmetric $M$-tensor by applying the methodology proposed in this work. We provide some applications of our characterizations of symmetric $H^+$-tensors and $M$-tensors in this chapter. In Section \ref{section:fan_prod}, we further apply our results to obtain lower bounds for the minimum $H$-eigenvalue of the Fan product of two symmetric $M$-tensors, that are tighter than the ones proposed in the related literature. 
Section \ref{part:conclusion} concludes the article with some final remarks.

All the computational experiments mentioned in this work were implemented in MATLAB
R2022b using the \texttt{Systems Polynomial Optimization Toolbox} (\texttt{SPOT})~\citep{SPOT}, and the solver \texttt{MOSEK 9.3.22}~\citep{mosek2022}, using an Intel computer Core i7-4770HQ with 2.20 GHz frequency and 16 GB RAM memory. The packages \texttt{
	allcomb(varargin) v4.2}~\citep{jos_allcomb_2025} and \texttt{Tensor Toolbox for MATLAB v3.1}~\citep{bader_tensor_toolbox_2019} are also employed to formulate the power cone optimization problems. 
The Github repository \href{https://github.com/XinEDprob/spotless}{https://github.com/XinEDprob/spotless} makes publicly available
all the data and code used to generate
the computational results presented in the article.


\section{Preliminaries}
\label{part:pre}

For ease of exposition, in what follows, we use small letters $a,b,\dots$ for scalars and vectors; capital letters $A,B, \dots$ for matrices; calligraphic letters~$\mathcal{A}, \mathcal{B}, \dots$ for tensors and~$\mathscr{A}, \mathscr{B},\dots $ for index sets; and blackboard bold letters $\mathbb{T}, \mathbb{D}, \dots$ for other kinds of sets or spaces in this work.

First we introduce additional notation and some fundamental properties of tensors. Let $\R[x]: = \R [ x_1,\dots, x_n ] $ be the set of polynomials in $n$ variables with real coefficients. A polynomial $p\in \R[x]$ is called a sum of squares (SOS) if it can be written as $p = \sum_{i} q_i^2$ for a finite number of polynomials $q_i \in \R[x]$. Tensor $\mathcal{A} =  (a_{i_1 i_2 \dots i_m}) \in \mathbb{S}_{m,n}$ is said to have an SOS-tensor decomposition if its corresponding polynomial 
\begin{equation}
	\label{eq:tensor_equation}
	{\mathcal{A}}x^m =  \sum_{i_1, i_2,\dots, i_m = 1}^n a_{i_1 i_2 \dots i_m } x_{i_1} x_{i_2}\cdots x_{i_m}
\end{equation}
is an SOS~\citep[see, e.g.,][]{luo2015linear}. A tensor is called a PSD tensor if its corresponding polynomial is globally nonnegative~\citep[see, e.g.,][]{luo2015linear}. The authors in~\citep{chen2015sos} show that every even order symmetric $H^+$-tensor has an SOS-tensor decomposition. 

\begin{theorem}[{\citep[][Thm. 3.7]{chen2015sos}}]
	\label{theo:H_plus_SOS}
	Let $m, n \in \mathbb{N}$ and $\mathcal{A} \in \mathbb{S}_{m,n}$ be an $H^+$-tensor. If $m$ is even, then $\mathcal{A}$ has an SOS-tensor decomposition.
\end{theorem}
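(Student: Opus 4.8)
The plan is to combine the known description of symmetric $H^+$-tensors as generalized diagonally dominant tensors with an explicit sum-of-squares certificate in the diagonally dominant case. Since $\mathcal{A}\in\mathbb{S}_{m,n}$ is an $H^+$-tensor, it is an $H$-tensor with nonnegative diagonal entries, hence a generalized diagonally dominant tensor (with nonnegative diagonal) by~\cite[Theorem~4.9]{kannan2015some}; so there is a vector $d\in\R^n$ with $d_k>0$ for all $k$ such that the tensor $\mathcal{B}\in\mathbb{S}_{m,n}$ with $b_{i_1\cdots i_m}=d_{i_1}\cdots d_{i_m}\,a_{i_1\cdots i_m}$ is diagonally dominant, i.e. $|b_{k\cdots k}|\geq\sum_{(i_2,\dots,i_m)\neq(k,\dots,k)}|b_{ki_2\cdots i_m}|$ for every $k\in[n]$. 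Note that $b_{k\cdots k}=d_k^m\,a_{k\cdots k}\geq 0$ and that $\mathcal{B}x^m=\mathcal{A}(Dx)^m$ with $D=\operatorname{diag}(d)$. As $D$ is an invertible diagonal matrix, $\mathcal{A}x^m$ is an SOS if and only if $\mathcal{B}x^m$ is an SOS (the substitution $x\mapsto D^{\pm1}x$ turns an SOS representation of one into an SOS representation of the other). Hence it suffices to prove the claim when $\mathcal{A}$ itself is symmetric, diagonally dominant, with nonnegative diagonal, which I assume from now on.

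Next I would expand $\mathcal{A}x^m$ and regroup it by monomials. Write $\binom{m}{\beta}$ for the multinomial coefficient of a multi-index $\beta\in\mathbb{Z}_{\geq0}^n$ with $|\beta|=m$; by symmetry, every monomial $x^\beta$ of degree $m$ occurs in $\mathcal{A}x^m$ with coefficient $\binom{m}{\beta}a_\beta$, where $a_\beta$ is the common value of the entries $a_{i_1\cdots i_m}$ whose index multiset equals $\beta$. Counting the tuples $(k,i_2,\dots,i_m)$ realizing a given multiset shows that diagonal dominance at $k$ is precisely $a_{k\cdots k}\geq\sum_{\beta}\frac{\beta_k}{m}\binom{m}{\beta}|a_\beta|$, where the sum runs over multi-indices $\beta$ with $|\beta|=m$ that are not pure powers. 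Since $\sum_{k}\frac{\beta_k}{m}=1$, this gives the identity
\[
\mathcal{A}x^m=\sum_{k=1}^n\Big(a_{k\cdots k}-\sum_{\beta}\frac{\beta_k}{m}\binom{m}{\beta}|a_\beta|\Big)x_k^m+\sum_{\beta}\binom{m}{\beta}|a_\beta|\Big(\sum_{k=1}^n\frac{\beta_k}{m}x_k^m+\operatorname{sgn}(a_\beta)\,x^\beta\Big).
\]
Every coefficient in the first sum is nonnegative by diagonal dominance, and $x_k^m=(x_k^{m/2})^2$ since $m$ is even, so the first sum is an SOS. For the second sum it is enough that each $\sum_{k}\frac{\beta_k}{m}x_k^m\pm x^\beta$ be an SOS; and substituting $x_j\mapsto-x_j$ for some $j$ with $\beta_j$ odd leaves every $x_k^m$ fixed (because $m$ is even) while flipping the sign of $x^\beta$, so the whole matter reduces to showing: for $m$ even and $|\beta|=m$, the polynomial $P_\beta(x):=\frac{1}{m}\sum_{k=1}^n\beta_k\,x_k^m-x^\beta$ is an SOS.

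The heart of the argument is this last claim. The polynomial $P_\beta$ is nonnegative on $\R^n$ by the weighted arithmetic--geometric mean inequality, $\frac1m\sum_k\beta_k|x_k|^m\geq\prod_k|x_k|^{\beta_k}=|x^\beta|$ (using $x_k^m=|x_k|^m$), and I would produce an explicit SOS certificate recursively: split the size-$m$ multiset $\beta$ into two halves $\gamma,\delta$ with $|\gamma|=|\delta|=m/2$, use $x^\beta=x^\gamma x^\delta=\tfrac12\big((x^\gamma)^2+(x^\delta)^2\big)-\tfrac12(x^\gamma-x^\delta)^2$, and then dominate $(x^\gamma)^2$ and $(x^\delta)^2$ by suitable nonnegative combinations of the monomials $x_k^m$ --- another instance of the same inequality, now on monomials of half the degree --- so that each step contributes an explicit square; evenness of $m$ is exactly what makes $x^\gamma$, $x^\delta$ and $x_k^{m/2}$ genuine polynomials. (Alternatively, this claim is of the type covered by Reznick's analysis of forms derived from the arithmetic--geometric inequality, the relevant instances here being those whose Newton-polytope vertices lie on the coordinate axes.) Making this recursion terminate cleanly --- in particular when $m/2$ is odd, where $\beta$ admits no splitting into two equal even halves and monomials must be peeled off more carefully --- is the step I expect to be the main obstacle; the rest is bookkeeping. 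Granting the claim, $\mathcal{A}x^m$ is a sum of SOS polynomials, hence an SOS, and reversing the diagonal substitution yields the desired SOS decomposition of the original $H^+$-tensor.
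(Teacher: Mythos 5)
The paper does not actually prove this statement: Theorem~\ref{theo:H_plus_SOS} is imported verbatim from \cite{chen2015sos} (their Theorem~3.7), so there is no in-paper proof to compare yours against. Judged on its own, your reduction chain is sound and is essentially the standard argument (and, in substance, the one used in \cite{chen2015sos}): pass from $H^+$ to GDD$^+$ via \cite[Theorem 4.9]{kannan2015some}, absorb the scaling matrix $D$ into an invertible change of variables (which preserves the SOS property), regroup the diagonally dominant case by monomials using $\binom{m-1}{\beta-e_k}=\frac{\beta_k}{m}\binom{m}{\beta}$, and reduce everything to the weighted arithmetic--geometric forms $P_\beta=\frac1m\sum_k\beta_k x_k^m-x^\beta$. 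The sign-flip reduction is fine except when every $\beta_j$ is even, where there is no odd coordinate to flip; but in that case $x^\beta=(x^{\beta/2})^2$ is itself a square, so the ``$+$'' instance is immediate and only $P_\beta$ remains, as you claim.

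The one genuine gap is the one you flag yourself: that $P_\beta$ is a sum of squares for even $m$. This is true --- it is the Hurwitz--Reznick theorem on AGI forms whose Newton polytope is the simplex with vertices $m e_1,\dots,m e_n$ (Reznick, \emph{Forms derived from the arithmetic--geometric inequality}, Math.\ Ann.\ 283 (1989)) --- but your halving recursion does not obviously terminate. Writing $\beta=\gamma+\delta$ with $|\gamma|=|\delta|=m/2$ gives $P_\beta=\frac12P_{2\gamma}+\frac12P_{2\delta}+\frac12\left(x^\gamma-x^\delta\right)^2$ with $2\gamma,2\delta$ even, but the subsequent splittings of even degree-$m$ lattice points can cycle (for $m=6$, the points $(4,2,0)$ and $(2,4,0)$ mediate each other and neither reduces directly to a vertex), and Reznick resolves this not by structural recursion but by proving that every lattice point mediated by the even vertex set yields an SOS form, via an averaging/linear-system argument; he also proves that for the simplex framework every lattice point of the right degree is so mediated. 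You should either cite that theorem outright or reproduce the mediated-set argument; as written, the key lemma is asserted rather than proved. Everything upstream of it is correct.
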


From Theorem \ref{theo:H_plus_SOS}, it follows that an even order symmetric $H^+$-tensor is also a PSD tensor. On the other hand, symmetric $H^+$-tensors can be characterized using the notion of diagonally dominant tensors (see Definition \ref{def:dd_sdd}). Most of the work related to $H^+$-tensors makes use of the diagonal dominance property~\citep[see, e.g.,][]{huang2019iterative,li2014criterions,li2017programmable,wang2017new, zhao2016criterions}. We will also make use of this property in our results; hence, we present some related definitions.

\begin{definition}[{\citep[][Def. 6.5]{luo2016completely}}]
	\label{def:dd_sdd}
	Let $m, n \in \mathbb{N}$ and $ \mathcal{A}= (a_{i_{1} \dots i_m}) \in \mathbb{T}_{m,n}$.
	\begin{enumerate}[label=(\roman*)]
		\item $\mathcal{A}$ is called a diagonally dominant (DD) tensor if 
		\begin{equation}
			\label{inner1:diag}
			|a_{ii\dots i}| \geq \sum_{(i_2,\dots,i_m) \neq (i,\dots,i)} |a_{i i_2 \dots i_m}|, \ \ \forall \  i \in [n].
		\end{equation}
		\item $\mathcal{A}$ is called a generalized diagonally dominant (GDD) tensor if there exists a positive diagonal matrix $D$ such that the tensor $\mathcal{A} D^{1-m}D\cdots D$ defined as 
		\begin{equation}
			\label{eq:gdd_def1}
			(\mathcal{A}D^{1-m}D\cdots D)_{i_1\dots i_m} = a_{i_1 \dots i_m} d_{i_1}^{1-m} d_{i_2}\cdots d_{i_m}, \ \ \forall  i_1, \dots, i_m \in [n],
		\end{equation}
		is diagonally dominant, where $d_{i} = D_{ii}$ is the $i$th diagonal element of $D$.
		\label{defitemii}
	\end{enumerate}
\end{definition}

From the definition of DD tensors and GDD tensors, one can derive 
the following equivalent definition of GDD tensors.

\begin{proposition}
	\label{prop: alter_gdd_def}
	Let $m, n \in \mathbb{N}$, then $\mathcal{A} \in \mathbb{T}_{m,n}$ is a GDD tensor if and only if there exists a positive diagonal matrix $D$ such that the tensor $\mathcal{A} DD\dots D$ defined as 
	\begin{equation}
		\label{eq:gdd_def2}
		(\mathcal{A}DD\cdots D)_{i_1\dots i_m} = a_{i_1 \dots i_m} d_{i_1}d_{i_2}\cdots d_{i_m}, \ \ \forall  i_1,  \dots, i_m \in [n],
	\end{equation}
	is diagonally dominant, where $d_{i} = D_{ii}$ is the $i$th diagonal element of $D$. If~$\mathcal{A} \in \mathbb{S}_{m,n}$, then $\mathcal{A} DD\cdots D \in \mathbb{S}_{m,n}$.
\end{proposition}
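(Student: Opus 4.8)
The plan is to reduce both membership conditions --- the one in Definition~\ref{def:dd_sdd}\ref{defitemii} and the one asserted here --- to the \emph{same} explicit system of $n$ scalar inequalities in the positive diagonal entries $d_1,\dots,d_n$ of $D$; once this is done, the equivalence holds verbatim with the \emph{same} matrix $D$ in the two statements, so no auxiliary rescaling of $D$ is needed. Throughout, fix a positive diagonal matrix $D$ and write $d_i=D_{ii}>0$.

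First I would unwind Definition~\ref{def:dd_sdd}\ref{defitemii}. Put $\mathcal{B}:=\mathcal{A}D^{1-m}D\cdots D$, with entries $b_{i_1\dots i_m}=a_{i_1\dots i_m}\,d_{i_1}^{1-m}d_{i_2}\cdots d_{i_m}$ as in~\eqref{eq:gdd_def1}. On the diagonal $i_1=\dots=i_m=i$ the scalar factor is $d_i^{1-m}d_i^{m-1}=1$, so $b_{ii\cdots i}=a_{ii\cdots i}$; for every other multi-index the factor $d_i^{1-m}d_{i_2}\cdots d_{i_m}$ is strictly positive and therefore commutes with $|\cdot|$. Hence the diagonal-dominance inequality~\eqref{inner1:diag} for $\mathcal{B}$ says $|a_{ii\cdots i}|\ge d_i^{1-m}\sum_{(i_2,\dots,i_m)\neq(i,\dots,i)}|a_{ii_2\cdots i_m}|\,d_{i_2}\cdots d_{i_m}$, and multiplying through by $d_i^{m-1}>0$ it becomes
\[
|a_{ii\cdots i}|\, d_i^{\,m-1}\ \ge\ \sum_{(i_2,\dots,i_m)\neq(i,\dots,i)}|a_{ii_2\cdots i_m}|\,d_{i_2}\cdots d_{i_m},\qquad i\in[n],
\]
a system I will call $(\star)$. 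Next I would carry out the analogous computation for $\mathcal{C}:=\mathcal{A}DD\cdots D$, with entries $c_{i_1\dots i_m}=a_{i_1\dots i_m}d_{i_1}d_{i_2}\cdots d_{i_m}$ from~\eqref{eq:gdd_def2}: now $c_{ii\cdots i}=a_{ii\cdots i}d_i^m$ and $|c_{ii_2\cdots i_m}|=|a_{ii_2\cdots i_m}|\,d_id_{i_2}\cdots d_{i_m}$, so~\eqref{inner1:diag} for $\mathcal{C}$ reads $|a_{ii\cdots i}|\,d_i^m\ge d_i\sum_{(i_2,\dots,i_m)\neq(i,\dots,i)}|a_{ii_2\cdots i_m}|\,d_{i_2}\cdots d_{i_m}$, which after division by $d_i>0$ is once more exactly $(\star)$. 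Therefore, for each fixed positive diagonal $D$, the tensor $\mathcal{A}D^{1-m}D\cdots D$ is DD if and only if $\mathcal{A}DD\cdots D$ is DD; applying the existential quantifier over $D$ to both sides yields the stated characterization of GDD tensors.

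For the symmetry claim, suppose $\mathcal{A}\in\mathbb{S}_{m,n}$. For any permutation $\pi$ of $\{1,\dots,m\}$,
\[
c_{i_{\pi(1)}\cdots i_{\pi(m)}}=a_{i_{\pi(1)}\cdots i_{\pi(m)}}\,d_{i_{\pi(1)}}\cdots d_{i_{\pi(m)}}=a_{i_1\cdots i_m}\,d_{i_1}\cdots d_{i_m}=c_{i_1\cdots i_m},
\]
since $\mathcal{A}$ is invariant under permutation of its indices and the product $d_{i_1}\cdots d_{i_m}$ does not depend on the order of its factors; thus $\mathcal{A}DD\cdots D\in\mathbb{S}_{m,n}$. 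This is precisely the point of the reformulation: the asymmetric factor $d_{i_1}^{1-m}$ in~\eqref{eq:gdd_def1} would spoil symmetry, whereas~\eqref{eq:gdd_def2} preserves it.

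I do not anticipate a genuine obstacle here --- the argument is entirely bookkeeping of exponents. The only points requiring care are that every scalar multiplier that appears ($d_i^{1-m}$, $d_i^{m-1}$, $d_i$, $d_i^m$) is strictly positive and hence passes through $|\cdot|$ unchanged; that the cancellations ($d_i^{1-m}d_i^{m-1}=1$ and division by $d_i$) are valid for all $m$, including low-order cases; and, most importantly, that the \emph{same} $D$ serves in both formulations, so the equivalence is immediate and does not require constructing a new diagonal matrix from the old one.
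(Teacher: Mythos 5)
Your proof is correct and follows essentially the same route as the paper's: for a fixed positive diagonal $D$, the diagonal-dominance conditions for $\mathcal{A}D^{1-m}D\cdots D$ and for $\mathcal{A}DD\cdots D$ are both reduced to the same scalar system (the paper normalizes by multiplying through by $d_i^m$, you by $d_i^{m-1}$, which is the identical computation), so the existential quantifier over $D$ transfers verbatim. You additionally spell out the symmetry claim, which the paper's proof leaves implicit; that is a harmless and welcome addition.
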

\begin{proof}
	From Definition~\ref{def:dd_sdd}\ref{defitemii}, if $\mathcal{A} = (a_{i_{1} \dots i_m}) \in \mathbb{T}_{m,n}$ is a GDD tensor, then there exists a positive diagonal matrix $D $ such that $\mathcal{A}D^{1-m}D\cdots D$ is a DD tensor. That is for all~$i \in [n]$,
	\begin{equation}
		\label{eq:prop1_prof1}
		|(\mathcal{A}D^{1-m}D\cdots D)_{i\dots i}| \geq \sum_{(i_2,\dots,i_m) \neq (i,\dots,i)}  |(\mathcal{A}D^{1-m}D\cdots D)_{i i_2\dots i_m}| .
	\end{equation}
	Note that \eqref{eq:prop1_prof1} is equivalent to 
	\begin{equation}
		\label{eq:prop1_prof2}
		|a_{i\dots i}| \geq \sum_{(i_2,\dots,i_m) \neq (i,\dots,i)}  |a_{i \dots i_m} d_{i}^{1-m} d_{i_2}\cdots d_{i_m}|.
	\end{equation}
	Considering that $d_{i} >0 $ for all $i \in [n]$, and multiplying by $d_i^m$ on both sides of~\eqref{eq:prop1_prof2}, we have that
	\begin{equation}
		\label{eq:prop1_prof3}
		|a_{i\dots i} | d_i^m \geq \sum_{(i_2,\dots,i_m) \neq (i,\dots,i)}  |a_{i \dots i_m} | d_{i} d_{i_2}\cdots d_{i_m}, 
	\end{equation}
	for all $ i \in [n]$. Thus, the tensor $\mathcal{A} DD\dots D$ defined by \eqref{eq:gdd_def2} is a DD tensor.
	
	For the another direction, if the tensor $\mathcal{A} DD\cdots D$ defined by \eqref{eq:gdd_def2} is a DD tensor for a positive diagonal matrix~$D$, then inequality \eqref{eq:prop1_prof3} holds for all $i \in [n]$. Dividing both sides of \eqref{eq:prop1_prof3} by $d_i^m > 0$,  we have inequality \eqref{eq:prop1_prof2}, which is equivalent to \eqref{eq:prop1_prof1}, 	for all $ i \in [n]$ and shows that $\mathcal{A}$ is a GDD tensor.
\end{proof}

For the remainder of the article, denote by ${DD}_{m,n}$ and ${GDD}_{m,n}$ the set of DD tensors and the set of GDD  tensors in $\mathbb{S}_{m,n}$, respectively. DD and GDD tensors with nonnegative diagonal elements will be referred as DD$^+$ and GDD$^+$ tensors, respectively. Also, denote by $DD^+_{m,n}$ and $GDD^+_{m,n}$ the set of DD$^+$ tensors and the set of GDD$^+$ tensors in $\mathbb{S}_{m,n}$, respectively. The set of PSD tensors in $\mathbb{S}_{m,n}$ is denoted as $PSD_{m,n}$.

For $n \in \mathbb{N}$, a set $\mathbb{W} \subset \R^n$ is called a cone if $0 \in \mathbb{W}$ and~$x \in \mathbb{W}$ implies~$\lambda x \in \mathbb{W}$ for any $\lambda \geq 0 $. A set $\mathbb{W}$ is called a convex cone if $\lambda x + \mu y \in \mathbb{W}$ for any $x,y \in \mathbb{W}$ and any $\lambda, \mu \geq 0$. Given a set $\mathbb{W}$, let $cone(\mathbb{W}) = \{ \lambda x \ | \ x \in \mathbb{W}, \lambda \geq 0 \}$ be the {\em conic hull} of $\mathbb{W}$; and $convex(\mathbb{W}) = \{ \lambda x + \mu y \ | \ x, y \in \mathbb{W}, \lambda ,\mu \geq  0, \lambda + \mu = 1 \}$ be the {\em convex hull} of $\mathbb{W}$.

Clearly, for $m,n \in \mathbb{N}$, $DD^+_{m,n}$ is a convex cone. We will show that~$GDD^+_{m,n}$ is also a convex cone later (see Proposition \ref{prop:convex_cone_GDD}). Next we present a characterization of symmetric $H$-tensors via symmetric GDD tensors.

\begin{theorem}[{\citep[][Thm.~4.9]{kannan2015some}}]
	\label{them:gdd_h}
	Let $m, n \in \mathbb{N}$ and $\mathcal{A} \in \mathbb{S}_{m,n}$. Then $\mathcal{A}$ is an~$H$-tensor if and only if $\mathcal{A} \in GDD_{m,n}$.
\end{theorem}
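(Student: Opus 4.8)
The plan is to route the equivalence through the comparison tensor $M(\mathcal{A})$ together with the spectral characterization of $M$-tensors. I would first record that $M(\mathcal{A})$ is always a $Z$-tensor: its diagonal entries $M(\mathcal{A})_{ii\dots i}=|a_{ii\dots i}|\ge 0$ and all its off-diagonal entries are nonpositive, so setting $s=\max_{i\in[n]}|a_{ii\dots i}|$ and $\mathcal{D}=s\mathcal{I}-M(\mathcal{A})\ge 0$ gives $M(\mathcal{A})=s\mathcal{I}-\mathcal{D}$. Hence, by the definitions of $H$-tensor and $M$-tensor, $\mathcal{A}$ is an $H$-tensor if and only if this specific $Z$-tensor $M(\mathcal{A})$ is an $M$-tensor. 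The engine of the argument is then the known equivalence coming from Perron--Frobenius theory for nonnegative tensors (see~\cite{ding2013m,kannan2015some}): a $Z$-tensor $\mathcal{B}\in\mathbb{T}_{m,n}$ is an $M$-tensor if and only if there exists an entrywise positive $x\in\R^n$ with $\mathcal{B}x^{m-1}\ge 0$.

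The second ingredient is an elementary ``bridge'' identity matching this semipositivity condition to diagonal dominance of a scaled copy of $\mathcal{A}$. Let $D$ be a positive diagonal matrix with diagonal $d=(d_1,\dots,d_n)$ and put $x=d$. Since $d_j>0$ for all $j$ and $(\mathcal{A}D^{1-m}D\cdots D)_{ii\dots i}=a_{ii\dots i}$, the $i$-th inequality defining diagonal dominance of $\mathcal{A}D^{1-m}D\cdots D$ in Definition~\ref{def:dd_sdd}\ref{defitemii} reads
\[
|a_{ii\dots i}|\;\ge\; d_i^{\,1-m}\!\!\sum_{(i_2,\dots,i_m)\ne(i,\dots,i)}\!\!|a_{ii_2\dots i_m}|\,d_{i_2}\cdots d_{i_m},
\]
and multiplying both sides by $d_i^{\,m-1}>0$ rewrites it as exactly $(M(\mathcal{A})x^{m-1})_i\ge 0$. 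Therefore $\mathcal{A}D^{1-m}D\cdots D$ is a DD tensor if and only if $M(\mathcal{A})x^{m-1}\ge 0$ for $x=\operatorname{diag}(D)>0$.

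Combining the two ingredients closes both implications. If $\mathcal{A}$ is an $H$-tensor, then $M(\mathcal{A})$ is an $M$-tensor, so there is $x>0$ with $M(\mathcal{A})x^{m-1}\ge 0$; taking $D=\operatorname{diag}(x)$ and applying the bridge identity shows $\mathcal{A}D^{1-m}D\cdots D$ is DD, so $\mathcal{A}\in GDD_{m,n}$. Conversely, if $\mathcal{A}\in GDD_{m,n}$, choose a positive diagonal $D$ witnessing the diagonal dominance of $\mathcal{A}D^{1-m}D\cdots D$; the bridge identity yields $M(\mathcal{A})x^{m-1}\ge 0$ with $x=\operatorname{diag}(D)>0$, and since $M(\mathcal{A})$ is a $Z$-tensor the spectral characterization gives that $M(\mathcal{A})$ is an $M$-tensor, i.e., $\mathcal{A}$ is an $H$-tensor. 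Symmetry of $\mathcal{A}$ is used only to restate the conclusion through the form $\mathcal{A}DD\cdots D$ of Proposition~\ref{prop: alter_gdd_def} and, more essentially, in the step discussed next.

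I expect the only genuinely nontrivial step to be the spectral characterization of $M$-tensors among $Z$-tensors invoked in the first paragraph; everything else is bookkeeping with the bridge identity. The delicate point there is producing a \emph{strictly} positive $x$ (the GDD definition insists on a positive diagonal $D$), rather than merely a nonzero nonnegative vector such as a Perron vector of $\mathcal{D}$. For strong $M$-tensors strict positivity is immediate; in the general $M$-tensor case it is precisely where one exploits the symmetry of $\mathcal{A}$ --- equivalently of $\mathcal{D}=s\mathcal{I}-M(\mathcal{A})$ --- to upgrade a nonnegative scaling vector to a positive one, relying on the Perron--Frobenius theory for nonnegative tensors and the Collatz--Wielandt-type bound $\rho(\mathcal{D})\le\max_i (\mathcal{D}x^{m-1})_i/x_i^{m-1}$ valid for every positive $x$. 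That is the one place where more than elementary algebra enters; the remainder of the proof is routine.
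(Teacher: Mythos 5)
The paper does not actually prove this statement: it is imported verbatim from Theorem~4.9 of \cite{kannan2015some}, so there is no in-paper argument to compare against, and your attempt has to be judged on its own. Your architecture is the natural one: reduce to the comparison tensor, and observe the ``bridge identity'' that diagonal dominance of $\mathcal{A}D^{1-m}D\cdots D$ at index $i$ becomes, after multiplying through by $d_i^{m-1}>0$, exactly $(M(\mathcal{A})d^{m-1})_i\ge 0$ with $d=\operatorname{diag}(D)$. That identity is correct, and it makes the direction $\mathcal{A}\in GDD_{m,n}\Rightarrow \mathcal{A}$ is an $H$-tensor essentially complete: a positive $x$ with $M(\mathcal{A})x^{m-1}\ge 0$ gives $(M(\mathcal{A})+\epsilon\mathcal{I})x^{m-1}>0$ for every $\epsilon>0$, so $M(\mathcal{A})+\epsilon\mathcal{I}$ is a strong $M$-tensor for all $\epsilon>0$ and hence $M(\mathcal{A})$ is an $M$-tensor; no symmetry is needed there.

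The gap is in the converse. The lemma you invoke --- a $Z$-tensor $\mathcal{B}\in\mathbb{T}_{m,n}$ is an $M$-tensor if and only if there is an entrywise positive $x$ with $\mathcal{B}x^{m-1}\ge 0$ --- is false as stated: already for $m=2$, the $Z$-matrix with rows $(0,0)$ and $(-1,0)$ is a (singular, reducible, nonsymmetric) $M$-matrix admitting no such $x$. What is true, and what Theorem~4.9 of \cite{kannan2015some} amounts to via your bridge identity, is the \emph{symmetric} version of this semipositivity statement; producing a strictly positive scaling vector from the mere hypothesis $s\ge\rho(\mathcal{D})$ for a symmetric, possibly reducible and singular $M$-tensor is precisely the hard content of the theorem. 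Your closing paragraph acknowledges the issue but does not supply the argument: the Collatz--Wielandt-type bound $\rho(\mathcal{D})\le\max_i(\mathcal{D}x^{m-1})_i/x_i^{m-1}$ holds for every positive $x$ and therefore runs in the easy direction (it certifies that a given good $x$ forces $\rho$ to be small); it does not produce a positive $x$ achieving the value $s$, especially when the relevant infimum over $x>0$ is not attained. As written, the proposal reduces the theorem to an unproved claim of essentially the same depth; to close it you would need the genuine Perron--Frobenius/decomposition argument for symmetric nonnegative tensors, or simply cite the result as the paper does.
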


\begin{corollary}
	\label{coro:gdd_h}
	Let $m, n \in \mathbb{N}$ and $\mathcal{A} \in \mathbb{S}_{m,n}$. Then $\mathcal{A}$ is an $H^+$-tensor if and only if $\mathcal{A} \in GDD^+_{m,n}$.
\end{corollary}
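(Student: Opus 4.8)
The plan is to derive Corollary~\ref{coro:gdd_h} directly from Theorem~\ref{them:gdd_h} by simply tracking what the ``$+$'' superscript adds to each side of the equivalence. Recall that an $H^+$-tensor is by definition an $H$-tensor with nonnegative diagonal elements, and that $GDD^+_{m,n}$ is by definition the set of tensors in $GDD_{m,n}$ whose diagonal elements are nonnegative. So the statement will follow if I can show that, for a symmetric tensor $\mathcal{A}$, the condition ``$\mathcal{A}$ has nonnegative diagonal elements'' is consistent on both sides once the $H$-tensor / $GDD$ equivalence of Theorem~\ref{them:gdd_h} is in hand.

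First I would argue the forward direction: suppose $\mathcal{A} \in \mathbb{S}_{m,n}$ is an $H^+$-tensor. Then by definition $\mathcal{A}$ is an $H$-tensor, so Theorem~\ref{them:gdd_h} gives $\mathcal{A} \in GDD_{m,n}$; and since $\mathcal{A}$ is an $H^+$-tensor, its diagonal elements are nonnegative. Combining these two facts yields $\mathcal{A} \in GDD^+_{m,n}$. For the converse, suppose $\mathcal{A} \in GDD^+_{m,n}$. Then $\mathcal{A} \in GDD_{m,n}$, so Theorem~\ref{them:gdd_h} gives that $\mathcal{A}$ is an $H$-tensor; and the diagonal elements of $\mathcal{A}$ are nonnegative by the definition of $GDD^+_{m,n}$. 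Hence $\mathcal{A}$ is an $H$-tensor with nonnegative diagonal entries, i.e.\ an $H^+$-tensor.

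There is essentially no obstacle here: the corollary is a routine bookkeeping consequence of Theorem~\ref{them:gdd_h} together with the parallel definitions of ``$H^+$'' and ``$GDD^+$'' as ``$H$ (resp.\ $GDD$) plus nonnegative diagonal.'' The only point worth a sentence of care is that adding the diagonal-nonnegativity constraint to both sides of an equivalence preserves the equivalence precisely because it is \emph{the same} constraint on both sides (it refers only to the entries $a_{i\dots i}$ of $\mathcal{A}$ itself, not to any transformed tensor), so no interaction with the diagonal scaling in Definition~\ref{def:dd_sdd}\ref{defitemii} needs to be checked. Thus the proof is just the two short implications above.
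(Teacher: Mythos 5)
Your proof is correct and matches the paper's intent exactly: the paper states Corollary~\ref{coro:gdd_h} without proof as an immediate consequence of Theorem~\ref{them:gdd_h} together with the parallel definitions of $H^+$-tensors and of $GDD^+_{m,n}$, which is precisely the two-implication bookkeeping you carry out. Your closing remark that the diagonal-nonnegativity condition refers to the entries of $\mathcal{A}$ itself (and is in any case preserved under positive diagonal scaling) is a sensible sanity check and introduces no gap.
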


From Theorem \ref{theo:H_plus_SOS} and Corollary \ref{coro:gdd_h}, if $m$ is even, we have the following inclusion relationships:
\begin{equation*}
	DD^+_{m,n} \subseteq GDD^+_{m,n} \subseteq PSD_{m,n}.
\end{equation*}
In light of Corollary \ref{coro:gdd_h}, in what follows, we will take the liberty to use both symmetric $H^+$ and GDD$^+$ interchangeably to refer to symmetric $H^+$-tensors. 

Denote $\card(A)$ as the cardinality of the set $A$. For $m, n \in \mathbb{N}$, define the index sets

\begin{equation*}
	\begin{split}
		\mathscr{D}_{n}^m = \{(i_1, \dots, i_m) \ | \ 1 \leq i_1 \leq \cdots \leq i_m \leq n \} \cap \\
		\{(i_1, \dots , i_m) \ | \ \card(\{i_1, \dots, i_m\}) > 1\} ,
	\end{split}
\end{equation*}

and
\[
\mathscr{F}_{n}^m = \{ (\underbrace{i,i,\dots, i}_{m}) \ | \ i \in [n] \}.
\]

For any index $(i_1,\dots, i_m) \in \mathscr{D}_{n}^m \cup \mathscr{F}_{n}^m$,  denote $\mathscr{P}_{i_1  \dots i_m} $ as the set of all permutations of~$i_1, \dots, i_m$ and denote 
\[
\mathscr{Q}_{i_1  \dots i_m} = \{(\underbrace{p,p, \dots, p}_{m}) \ | \ p \in \{ i_1, \dots i_m  \}\}.\]
Also, for $(i_1,\dots,i_m) \in \mathscr{D}_{n}^m \cup \mathscr{F}_{n}^m$, let $\mathbb{D}_{m,n}^{i_1  \dots i_m} \in  \mathbb{S}_{m,n}$ be the set of sparse tensors defined as follows:

\begin{equation}
	\label{eq:def_D_mn_i}
	\begin{split}
		\mathbb{D}_{m,n}^{i_1 \dots i_m} = \{ (a_{j_1 \dots j_m}) \in \mathbb{S}_{m,n} \ | \ a_{j_1 \dots j_m} = 0 \text{ \ if \ } \\
		(j_1, \dots, j_m) \notin \mathscr{P}_{i_1 \dots i_m} \cup \mathscr{Q}_{i_1 \dots i_m} \}.
	\end{split}
\end{equation}

Further, let
\[
\mathbb{D}_{m,n} =\bigcup_{(i_1,\dots, i_m) \in \mathscr{D}_n^m} \mathbb{D}_{m,n}^{i_1  \dots i_m}.
\]	

To proceed, we introduce the following class of tensors. 
\begin{definition}
	\label{def:two_subclass}
	For $m, n \in \mathbb{N}$ and any $(i_1, \dots, i_m) \in \mathscr{D}_{n}^m $, $c \in \{0 ,1\}$, denote $\mathcal{V}^{c, i_1 \dots i_m} = (v^{c, i_1  \dots i_m}_{j_1  \dots j_m}) \in \mathbb{D}_{m,n}^{i_1  \dots i_m}$,  as the tensor satisfying:
	\begin{enumerate}[label=(\roman*)]
		\item $v^{c, i_1   \dots i_m  }_{j_1\dots j_m} = (-1)^c$ if $(j_1,\dots ,j_m) \in \mathscr{P}_{i_1 \dots i_m}$.
		\item  The value of $j$-th diagonal element is equal to the sum of the absolute values of the off-diagonal entries on the $j$-th slice (the diagonal elements are excluded in the sum); that is \\
		\[v_{j j  \dots j  }^{c,i_1 \dots i_m} = \sum_{ (j_2, \dots, j_m ) \neq (j, \dots, j ) } |v_{j j_2  \dots j_m  }^{c,i_1  \dots i_m}|, \forall  \ j\in [n].\]
	\end{enumerate}
	Further, for all $i \in [n]$, denote $\mathcal{V}^{0,i i \dots i}$ as the tensor where the only nonzero entry is $v^{0,ii\dots i}_{ii\dots i} = 1$; and $\mathcal{V}^{1,i i \dots i}$ as the tensor with all entries set to 0. Also, let $\mathbb{E}_{m,n} = \{  \mathcal{V}^{c, i_1 \dots i_m}   \ | \ c \in \{0, 1\},  (i_1, \dots, i_m) \in \mathscr{D}_{n}^m \cup \mathscr{F}_{n}^m   \}  $.
\end{definition}

From Definition \ref{def:two_subclass}, it follows that for all $(i_1, \dots, i_m) \in \mathscr{D}_{n}^m \cup \mathscr{F}_{n}^m$ and~$c \in \{0,1\}$, $\mathcal{V}^{c,i_1 \dots i_m} \in DD_{m,n}^+$. For example, when $m = 2$ and $n = 4$, we have
\begin{equation*}
	\mathcal{V}^{0,12} = \begin{bmatrix}
		1    & 1 & 0 & 0 \\
		1       & 1 & 0 &0 \\
		0  &  0& 0 & 0\\
		0  &  0& 0 &0
	\end{bmatrix}, \quad \quad 
	\mathcal{V}^{1,13} = \begin{bmatrix}
		1    & 0 & -1 & 0 \\
		0       & 0 & 0 &0 \\
		-1  &  0& 1 & 0\\
		0  &  0& 0 &0
	\end{bmatrix}.
	\quad \quad 
\end{equation*}

For ease of exposition, we also introduce an auxiliary notation for indices. For $m, n \in \mathbb{N}$, index $ \ii  :=(i_1, i_2,\dots, i_m) \in  \mathscr{D}_n^m $ and some $l^{\ii} \in[m]$, we call 
\[
(\tightindex, \tightpower) \in { [n]^{l^{\ii}} \times [m]^{l^{\ii}}}
\]
as the {\em tight pair} of $\ii$ if $\tightindex$ and $\tightpower$ satisfy 
\begin{equation}
	\label{eq:tightpair}
	x_{i_1} x_{i_2} \dots x_{i_m} = x_{j_1^{\ii}}^{\alpha_1^{\ii}} x_{j_2^{\ii}}^{\alpha_2^{\ii}} \dots x_{j_{l^{\ii}}^{\ii}}^{\alpha_{l^{\ii}}^{\ii}},
\end{equation}
where $ 1 \leq j_1^{\ii} < j_2^{\ii} < \dots < j_{l^{\ii}}^{\ii} \leq n$. We will refer to~$\tightindex$ as the {\em tight index} and to $\tightpower$ as the tight power.

The following example illustrates the tight pair notation and shows the benefit of introducing it. 

\begin{example}
	\label{ex:tightpair} 
	Assume index~$ \ii   = (i_1, i_2,i_3, i_4, i_5, i_6) = (1, 1, 1, 2, 2, 3) \in \mathscr{D}_3^6$, then the tight index of $\ii$ is $(j_1^{\ii}, j_2^{\ii}, j_3^{\ii}) = (1, 2, 3)$ and the tight power of $\ii$  is $( \alpha_1^{\ii}, \alpha_2^{\ii}, \alpha_2^{\ii}   ) = (3,2,1)$ as 
	\[
	x_{i_1} x_{i_2} x_{i_3} x_{i_4} x_{i_5} x_{i_6} = x_1^3 x_2^2 x_3.
	\] 
	
	Let $\mathcal{A} = (a_{i_1 i_2 i_3 i_4 i_5 i_6})\in \mathbb{S}_{6,3}$, one can easily obtain that the coefficient of $x_1^3 x_2^2 x_3$ in the corresponding polynomial $\mathcal{A} x^m$ is 
	\[
	\binom{m}{\alpha_1^{\ii}, \alpha_2^{\ii}, \alpha_3^{\ii}} a_{111223}= \binom{6}{3, 2, 1} a_{111223}.
	\]
	Besides, it is clear that the element $a_{111223}$ in $\mathcal{A}$ is multiplied by $x_1^3 x_2^2 x_3$ in the polynomial $\mathcal{A} x^m$.
\end{example}

It is important to note, however, that we will routinely drop the upper index $\vec{i}$ in the notation when the $\vec{i}$ we are referring to is clear from (or fixed in) the context. Further, denote $e_j $ as the unitary vector in the $j$th direction of appropriate dimensions.

\section{New characterization of symmetric $H^+$-tensors}
\label{part:dd_sdd}

In this section, we present a new characterization of symmetric $H^+$-tensors, or equivalently GDD$^+$ tensors (cf., Corollary \ref{coro:gdd_h}),  based on the {\em power cone}~\citep{chares2009cones,hien2015differential}. First, we characterize the set of DD$^+$ tensors.

\begin{proposition}
	\label{prop:DDT_cone}
	For $m, n \in \mathbb{N}$, $DD^+_{m,n} = convex(cone(\mathbb{E}_{m,n}))$ and each tensor in $\mathbb{E}_{m,n}$ generates an extreme ray of $ DD^+_{m,n} $.
\end{proposition}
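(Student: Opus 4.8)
The plan is to establish the two claims separately: first that $DD^+_{m,n} = convex(cone(\mathbb{E}_{m,n}))$, and second that each element of $\mathbb{E}_{m,n}$ generates an extreme ray. For the equality of sets I would prove the two inclusions. The inclusion $convex(cone(\mathbb{E}_{m,n})) \subseteq DD^+_{m,n}$ is the easy direction: every generator $\mathcal{V}^{c,i_1\dots i_m}$ lies in $DD^+_{m,n}$ by the remark following Definition~\ref{def:two_subclass} (the diagonal entries were constructed precisely to make \eqref{inner1:diag} hold with equality, and they are nonnegative), and $DD^+_{m,n}$ is a convex cone, so it is closed under the conic and convex hull operations. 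The substantive direction is $DD^+_{m,n} \subseteq convex(cone(\mathbb{E}_{m,n}))$, equivalently $DD^+_{m,n} \subseteq cone(\mathbb{E}_{m,n})$ since a conic hull of finitely many rays is already convex; it suffices to show every $\mathcal{A} \in DD^+_{m,n}$ is a nonnegative combination of the $\mathcal{V}^{c,\vec{i}}$.

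For that, given $\mathcal{A} = (a_{i_1\dots i_m}) \in DD^+_{m,n}$, I would write, for each off-diagonal orbit $\vec{i} = (i_1,\dots,i_m) \in \mathscr{D}_n^m$, its (common, by symmetry) entry $a_{i_1\dots i_m}$ as $a_{i_1\dots i_m} = \lambda_{\vec{i}}(-1)^{c_{\vec{i}}}$ with $\lambda_{\vec{i}} = |a_{i_1\dots i_m}| \geq 0$ and $c_{\vec{i}} \in \{0,1\}$ chosen according to the sign, and form $\mathcal{B} = \sum_{\vec{i} \in \mathscr{D}_n^m} \lambda_{\vec{i}} \mathcal{V}^{c_{\vec{i}},\vec{i}}$. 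By construction $\mathcal{B}$ agrees with $\mathcal{A}$ on every off-diagonal entry. Then $\mathcal{A} - \mathcal{B}$ is a diagonal tensor, and its $i$-th diagonal entry is $a_{ii\dots i} - \sum_{(j_2,\dots,j_m)\neq(i,\dots,i)} |a_{ij_2\dots j_m}|$, which is $\geq 0$ precisely because $\mathcal{A}$ is diagonally dominant. Hence $\mathcal{A} - \mathcal{B} = \sum_{i\in[n]} \mu_i \mathcal{V}^{0,ii\dots i}$ with $\mu_i \geq 0$, and adding this to $\mathcal{B}$ exhibits $\mathcal{A}$ as a nonnegative combination of tensors in $\mathbb{E}_{m,n}$. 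One technical point to handle cleanly: I must be careful that when passing from the symmetric entry $a_{i_1\dots i_m}$ to the tensor $\mathcal{V}^{c,\vec{i}}$, the "slice sums" defining the diagonal entries of $\mathcal{V}^{c,\vec{i}}$ add up, across all orbits $\vec{i}$, to exactly $\sum_{(j_2,\dots,j_m)\neq(i,\dots,i)} |a_{ij_2\dots j_m}|$ — this is a bookkeeping check that each off-diagonal index $(i,i_2,\dots,i_m)$ appears in the permutation set of exactly one canonical orbit $\vec{i} \in \mathscr{D}_n^m$.

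For the extreme-ray claim, fix a generator $\mathcal{V} := \mathcal{V}^{c,\vec{i}} \in \mathbb{E}_{m,n}$ and suppose $\mathcal{V} = \mathcal{A}_1 + \mathcal{A}_2$ with $\mathcal{A}_1, \mathcal{A}_2 \in DD^+_{m,n}$; I want to conclude $\mathcal{A}_1, \mathcal{A}_2$ are nonnegative multiples of $\mathcal{V}$. The support of $\mathcal{V}$ (its off-diagonal part) consists of a single orbit $\mathscr{P}_{i_1\dots i_m}$; since $|(\mathcal{A}_1)_{j_1\dots j_m}| + |(\mathcal{A}_2)_{j_1\dots j_m}| \geq |(\mathcal{A}_1 + \mathcal{A}_2)_{j_1\dots j_m}| = |\mathcal{V}_{j_1\dots j_m}|$ with equality forced off that orbit (both summands vanish there because $DD^+$ tensors have no sign constraint but the sum is zero and... actually one must argue via diagonal dominance: an off-diagonal entry of $\mathcal{A}_k$ that is nonzero off the support of $\mathcal{V}$ would have to be cancelled, forcing $\mathcal{A}_1$ and $\mathcal{A}_2$ to have opposite-sign entries there, and then I push this through the diagonal dominance inequalities and the nonnegativity of diagonals to derive a contradiction). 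On the single orbit, $(\mathcal{A}_1)_{i_1\dots i_m}$ and $(\mathcal{A}_2)_{i_1\dots i_m}$ are scalars summing to $(-1)^c$; diagonal dominance plus the fact that the total diagonal of $\mathcal{V}$ equals the orbit-size weight means neither $\mathcal{A}_k$ can "overshoot," pinning $(\mathcal{A}_k)_{i_1\dots i_m} = t_k(-1)^c$ with $t_1 + t_2 = 1$, $t_k \geq 0$, and then diagonal dominance forces each diagonal entry of $\mathcal{A}_k$ to be exactly $t_k$ times that of $\mathcal{V}$ (any slack would be incompatible with summing to $\mathcal{V}$'s diagonal, which has no slack). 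The purely diagonal generators $\mathcal{V}^{0,ii\dots i}$ are handled by the standard fact that the nonnegative coordinate directions are extreme rays of the nonnegative orthant, restricted to the diagonal subspace. \emph{The main obstacle} I anticipate is the extreme-ray argument — specifically, ruling out decompositions $\mathcal{V} = \mathcal{A}_1 + \mathcal{A}_2$ in which $\mathcal{A}_1$ and $\mathcal{A}_2$ individually have off-diagonal entries outside the support of $\mathcal{V}$ that cancel each other; this requires a careful simultaneous use of the diagonal-dominance inequalities for $\mathcal{A}_1$, for $\mathcal{A}_2$, and for $\mathcal{V}$, together with the tightness (equality) in the defining relation of $\mathcal{V}$, to show such entries are impossible.
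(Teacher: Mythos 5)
Your proof of the set equality $DD^+_{m,n} = convex(cone(\mathbb{E}_{m,n}))$ is correct and is essentially the paper's own argument: the paper writes your decomposition $\mathcal{A}=\mathcal{B}+(\mathcal{A}-\mathcal{B})$ in a single display (its equation~\eqref{eq:DD_cone}), splitting the off-diagonal orbits by sign into $\mathscr{P}_+$ and $\mathscr{P}_-$ and assigning the diagonal slack $a_{ii\dots i}-\sum_{(i_2,\dots,i_m)\neq(i,\dots,i)}|a_{ii_2\dots i_m}|$ to the generators $\mathcal{V}^{0,ii\dots i}$. The bookkeeping point you flag is the right one to check and it does go through, since every off-diagonal position lies in the permutation class of exactly one $\vec{i}\in\mathscr{D}_n^m$. (One small slip: with the paper's literal definition, $cone(\mathbb{E}_{m,n})$ is a \emph{union} of rays and is not convex, so ``$DD^+_{m,n}\subseteq cone(\mathbb{E}_{m,n})$'' is not the right reformulation; what you actually prove, and what is needed, is that every $\mathcal{A}\in DD^+_{m,n}$ is a finite nonnegative combination of generators.)

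The genuine gap is in the extreme-ray claim, at exactly the point you flag as the main obstacle. (For what it is worth, the paper's proof stops after the set equality and never addresses the extreme-ray assertion at all, so you have attempted strictly more than the authors prove.) The cancellation issue can be resolved, but not by an unspecified ``push through the inequalities''; here is a concrete way to close it. Let $\mathcal{V}^{c,\vec{i}}=\mathcal{A}_1+\mathcal{A}_2$ with $\mathcal{A}_1,\mathcal{A}_2\in DD^+_{m,n}$ and $\vec{i}\in\mathscr{D}_n^m$ with tight index $(j_1,\dots,j_l)$. For $j\notin\{j_1,\dots,j_l\}$ the $j$-th diagonal entries of $\mathcal{A}_1,\mathcal{A}_2$ are nonnegative and sum to $0$, hence vanish, and~\eqref{inner1:diag} then forces every entry of each $\mathcal{A}_k$ in slice $j$ to vanish; by symmetry this eliminates every off-diagonal entry carrying an index outside $\{j_1,\dots,j_l\}$. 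For $j=j_k$, add the two inequalities~\eqref{inner1:diag} for $\mathcal{A}_1$ and $\mathcal{A}_2$ on slice $j_k$ and compare with the defining relation of $\mathcal{V}^{c,\vec{i}}$ in Definition~\ref{def:two_subclass}, which holds with \emph{equality}: the resulting sandwich forces equality in both inequalities and, termwise, $|(\mathcal{A}_1)_p|+|(\mathcal{A}_2)_p|=|(\mathcal{V}^{c,\vec{i}})_p|$ at every off-diagonal position $p$ of that slice. Hence entries at positions outside $\mathscr{P}_{i_1\dots i_m}$ vanish, entries on $\mathscr{P}_{i_1\dots i_m}$ share the sign $(-1)^c$, and the diagonal entries are pinned to the corresponding slice sums. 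Since $\mathcal{A}_k$ is symmetric, its entries on the single orbit $\mathscr{P}_{i_1\dots i_m}$ all equal one scalar $s_k(-1)^c$ with $s_k\ge 0$, and the pinned diagonals then give $\mathcal{A}_k=s_k\mathcal{V}^{c,\vec{i}}$. Your handling of the diagonal generators $\mathcal{V}^{0,ii\dots i}$ is fine by the same slice-vanishing argument. Without an argument of this kind, the extreme-ray half of your proof is not complete.
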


\begin{proof}
	First, from Definition \ref{def:two_subclass}, it follows that $\mathbb{E}_{m,n} \subseteq DD^+_{m,n}$. This, together with the fact that $DD_{m,n}^+$ is a convex cone, implies that $convex(cone(\mathbb{E}_{m,n})) \subseteq DD^+_{m,n} $.
	
	Second, for  $\mathcal{A} = (a_{i_1\dots i_m}) \in DD^+_{m,n}$, denote $$  \mathscr{P}_+ =  \{(i_1, \dots, i_m) \in \mathscr{D}_n^m \ | \   a_{i_1i_2\dots i_m} \geq 0  \}$$ and $$ \mathscr{P}_- =  \{(i_1,\dots, i_m)\in \mathscr{D}_n^m \ | \  a_{i_1i_2\dots i_m} < 0  \}.$$ Then 
	\begin{flalign}
		\label{eq:DD_cone}
		\mathcal{A} = &\sum_{i = 1}^n \left(a_{i i \dots i} - \sum_{(i_2,\dots,i_m) \neq (i,\dots,i)} |a_{i i_2 \dots i_m}|\right) \mathcal{V}^{0, i i \dots i} \\
		+ &\sum_{ (i_1,i_2,\dots, i_m) \in \mathscr{P}_+ } a_{i_1 i_2 \dots  i_m} \mathcal{V}^{0, i_1i_2 \dots i_m}  +  \sum_{ (i_1,i_2,\dots, i_m) \in \mathscr{P}_- }( - a_{i_1 i_2 \dots  i_m}) \mathcal{V}^{1, i_1i_2 \dots i_m} \nonumber.
	\end{flalign}
	
	Since $\mathcal{A} \in DD_{m,n}^+$, $a_{i i \dots i} \geq  \sum_{(i_2,\dots,i_m) \neq (i,\dots,i)} |a_{i i_2 \dots i_m}|$ for all $i \in [n]$. Thus, $\mathcal{A} $ is in the convex hull of the conic hull of $\mathbb{E}_{m,n}$, after noticing that all the coefficients in the right hand side of \eqref{eq:DD_cone} are nonnegative. That is $DD^+_{m,n} \subseteq convex(cone(\mathbb{E}_{m,n}))$.
\end{proof}

To give a similar characterization for GDD$^+$ tensors, we need the following results first.

\begin{theorem}[{\citep[][Thm.~1(a)]{qi2013symmetric}}]
	\label{lemma:nonnegative_tensor_pho}
	For $m, n \in \mathbb{N}$, if $\mathcal{D} \in \mathbb{S}_{m,n}$ is a nonnegative tensor, then $\rho(\mathcal{D})$ is an $H$-eigenvalue of $\mathcal{D}$. 
\end{theorem}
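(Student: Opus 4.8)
The statement to prove is that for a nonnegative symmetric tensor $\mathcal{D}$, its spectral radius $\rho(\mathcal{D})$ is an $H$-eigenvalue — that is, it is attained with a real (indeed nonnegative) eigenvector. Since this is quoted as a known theorem from \cite{qi2013symmetric}, the plan is to reconstruct the standard Perron–Frobenius-type argument for nonnegative tensors.

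\medskip

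The plan is to proceed as follows. First I would recall the Perron--Frobenius theory for nonnegative tensors: for a nonnegative tensor $\mathcal{D} \in \mathbb{T}_{m,n}$, the quantity
\[
\rho(\mathcal{D}) = \max\left\{ \frac{x\tr(\mathcal{D}x^{m-1})}{x\tr x^{[m-1]}} \ \Big| \ x \in \R^n_+ \setminus\{0\} \right\}
\]
can be characterized variationally (a Collatz--Wielandt / Rayleigh-quotient type formula), and the maximum is attained at some $x^\ast \geq 0$ by a compactness argument on the simplex $\{x \geq 0 : \sum_i x_i^m = 1\}$, which is compact. Then I would show that this maximizer $x^\ast$ is in fact an eigenvector associated with the eigenvalue $\rho(\mathcal{D})$: writing down the first-order (KKT) optimality conditions for the constrained maximization of $x\tr(\mathcal{D}x^{m-1})$ over the simplex, and using nonnegativity together with the homogeneity of degree $m$ of the numerator, one obtains $\mathcal{D}(x^\ast)^{m-1} = \rho(\mathcal{D}) (x^\ast)^{[m-1]}$, at least on the support of $x^\ast$; a short argument (using nonnegativity of all entries of $\mathcal{D}$) extends the identity to all coordinates. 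Hence $\rho(\mathcal{D})$ is an eigenvalue with a real nonnegative eigenvector, i.e.\ an $H$-eigenvalue. Finally I would verify that no eigenvalue can exceed $\rho(\mathcal{D})$ in modulus — which is immediate from the definition of $\rho$ as the largest modulus of eigenvalues — so $\rho(\mathcal{D})$ is genuinely the spectral radius and is itself realized as an $H$-eigenvalue. The symmetry of $\mathcal{D}$ is what makes the Rayleigh-quotient formulation legitimate: for symmetric $\mathcal{D}$, $\nabla(x\tr \mathcal{D}x^{m-1}) = m\,\mathcal{D}x^{m-1}$, which is exactly what couples the optimality conditions to the eigenvalue equation.

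\medskip

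The main obstacle I expect is the step that upgrades the KKT stationarity conditions on the support of the maximizer to the full eigenvalue equation $\mathcal{D}(x^\ast)^{m-1} = \rho(\mathcal{D})(x^\ast)^{[m-1]}$ on every coordinate, particularly when $x^\ast$ has zero components (the reducible case). In the matrix case this is handled by passing to irreducible blocks or by a perturbation trick (replace $\mathcal{D}$ by $\mathcal{D} + \epsilon \mathcal{E}$ with $\mathcal{E}$ the all-ones tensor, which is positive hence irreducible, get a positive Perron vector and eigenvalue $\rho(\mathcal{D}+\epsilon\mathcal{E})$, and let $\epsilon \downarrow 0$ using continuity of $\rho$ and compactness of the set of normalized eigenvectors). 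I would adopt the perturbation approach for tensors as well: it is the cleanest way to avoid a case analysis on the zero pattern of $x^\ast$, and it only requires continuity of the spectral radius as a function of the tensor entries plus the already-established fact that a positive tensor has a positive eigenpair with eigenvalue equal to its spectral radius. Since the statement is cited verbatim from \cite{qi2013symmetric}, the write-up can legitimately be brief and simply invoke that reference, but the sketch above is the argument it rests on.
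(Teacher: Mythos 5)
The paper does not prove this statement at all: it is imported verbatim as Theorem~1(a) of \cite{qi2013symmetric} and used as a black box (e.g.\ in Proposition~\ref{prop:nonnegative_tensor_sum} and Lemma~\ref{lemma:H-eigenvalue_Z-tensor}), so there is no in-paper argument to compare against. Your reconstruction is essentially the standard one from Qi's paper: maximize $\mathcal{D}x^m$ over the compact set $\{x\in\R^n_+ : \sum_i x_i^m=1\}$, read off the eigenvalue equation from the KKT conditions, and handle zero coordinates of the maximizer. On that last point your first instinct is the better one: at a coordinate with $x^*_i=0$, stationarity gives $(\mathcal{D}(x^*)^{m-1})_i\le 0$ while nonnegativity of $\mathcal{D}$ and $x^*$ gives $(\mathcal{D}(x^*)^{m-1})_i\ge 0$, so the eigenvalue equation holds there trivially; the perturbation detour through $\mathcal{D}+\epsilon\mathcal{E}$ is unnecessary and quietly relies on continuity of $\rho$ in the tensor entries, which is itself not free.

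The one genuine soft spot is your closing step. You first \emph{define} the variational maximum to be $\rho(\mathcal{D})$ and then say that no eigenvalue exceeds $\rho(\mathcal{D})$ in modulus ``immediately from the definition of $\rho$ as the largest modulus of eigenvalues.'' As written this is circular: in the paper's convention $\rho(\mathcal{D})$ is the largest eigenvalue modulus, and what must be \emph{proved} is that the variational maximum $\lambda^*$ (which the KKT argument certifies as an $H$-eigenvalue) actually dominates $|\mu|$ for every eigenvalue $\mu$, so that $\lambda^*=\rho(\mathcal{D})$. That step is where symmetry and nonnegativity are really used: for any eigenpair $(\mu,y)$ with $y\in\mathbb{C}^n\setminus\{0\}$, taking moduli gives $|\mu|\,|y_i|^{m-1}=|(\mathcal{D}y^{m-1})_i|\le(\mathcal{D}|y|^{m-1})_i$, and multiplying by $|y_i|$ and summing yields
\begin{equation*}
|\mu|\sum_{i=1}^n |y_i|^m \;\le\; \mathcal{D}|y|^m \;\le\; \lambda^*\sum_{i=1}^n |y_i|^m ,
\end{equation*}
whence $|\mu|\le\lambda^*$. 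With that inequality supplied, your sketch is a correct and complete proof of the cited theorem.
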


Denote the largest $H$-eigenvalue of tensor $\mathcal{A} \in \mathbb{S}_{m,n}$ as $\lambda_{\max}(\mathcal{A})$. 
\begin{theorem}[{\citep[][Thm.~2]{qi2013symmetric}}]
	\label{lemma:nonnegative_tensor_max}
	For $m, n \in \mathbb{N}$, if $\mathcal{A} \in \mathbb{S}_{m,n}$ is a nonnegative tensor, then
	\[
	\lambda_{\max}(\mathcal{A}) = {\max} \left\{  \mathcal{A} x^m: x\in \R^n_+, \sum_{i=1}^n x_i^m = 1 \right\}.
	\]
\end{theorem}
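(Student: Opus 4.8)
The plan is to prove the two inequalities "$\le$" and "$\ge$" separately. First note that the feasible set $S := \{x \in \R^n_+ : \sum_{i=1}^n x_i^m = 1\}$ is nonempty (it contains $e_1$) and compact (closed, and bounded since $0 \le x_i \le 1$ on $S$), while $x \mapsto \mathcal{A}x^m$ is a polynomial, hence continuous; therefore the maximum $\mu := \max\{\mathcal{A}x^m : x \in S\}$ is attained at some $x^\ast \in S$, and in particular $x^\ast \neq 0$, so $J := \{i \in [n] : x_i^\ast > 0\}$ is nonempty.

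For $\mu \le \lambda_{\max}(\mathcal{A})$, I would apply the first-order (KKT) optimality conditions at $x^\ast$ to the program $\max\{\mathcal{A}x^m : g(x) = 1,\ -x \le 0\}$ with $g(x) = \sum_{i=1}^n x_i^m$. The gradients of the active constraints at $x^\ast$, namely $\nabla g(x^\ast) = m\,(x^\ast)^{[m-1]}$ (nonzero, supported on $J$) together with $\{e_i : i \notin J\}$, are linearly independent, so a constraint qualification holds and KKT applies. Using $\nabla(\mathcal{A}x^m) = m\,\mathcal{A}x^{m-1}$ (valid because $\mathcal{A}$ is symmetric), stationarity and complementarity yield $(\mathcal{A}(x^\ast)^{m-1})_i = \lambda\,(x_i^\ast)^{m-1}$ for $i \in J$, and $(\mathcal{A}(x^\ast)^{m-1})_i \le 0$ for $i \notin J$; but $\mathcal{A} \ge 0$ and $x^\ast \ge 0$ force $(\mathcal{A}(x^\ast)^{m-1})_i \ge 0$, so those entries equal $0 = \lambda\,(x_i^\ast)^{m-1}$. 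Hence $\mathcal{A}(x^\ast)^{m-1} = \lambda\,(x^\ast)^{[m-1]}$, so $\lambda$ is an $H$-eigenvalue of $\mathcal{A}$; contracting with $x^\ast$ and using $\sum_i (x_i^\ast)^m = 1$ gives $\mu = \mathcal{A}(x^\ast)^m = \lambda \le \lambda_{\max}(\mathcal{A})$.

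For the reverse inequality $\mu \ge \lambda_{\max}(\mathcal{A})$, I would invoke Theorem~\ref{lemma:nonnegative_tensor_pho}: since $\mathcal{A}$ is nonnegative, $\rho(\mathcal{A})$ is an $H$-eigenvalue, and because every eigenvalue has modulus at most $\rho(\mathcal{A})$ this forces $\lambda_{\max}(\mathcal{A}) = \rho(\mathcal{A})$. The companion Perron--Frobenius statement for nonnegative tensors (the remaining parts of Theorem~1 in \cite{qi2013symmetric}) provides a nonnegative eigenvector $y \ge 0$, $y \neq 0$, with $\mathcal{A}y^{m-1} = \lambda_{\max}(\mathcal{A})\,y^{[m-1]}$; rescaling so that $\sum_i y_i^m = 1$ places $y$ in $S$, and contracting with $y$ gives $\mathcal{A}y^m = \lambda_{\max}(\mathcal{A})$, so $\mu \ge \lambda_{\max}(\mathcal{A})$. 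Combining the two bounds gives the claim.

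The main obstacle I expect is the boundary behaviour in the KKT step: when $x^\ast$ has zero coordinates the stationarity equation cannot be written naively on all coordinates, so one must either verify the constraint qualification carefully as above, or, alternatively, perturb $\mathcal{A}$ to a strictly positive tensor $\mathcal{A}_\varepsilon$ whose maximizer over $S$ is strictly positive (by strict Perron--Frobenius positivity), run the clean interior Lagrange argument for $\mathcal{A}_\varepsilon$, and pass to the limit $\varepsilon \downarrow 0$ using continuity of both $\mu$ and $\lambda_{\max}$ in $\mathcal{A}$. A secondary point is that Theorem~\ref{lemma:nonnegative_tensor_pho} as quoted only asserts that $\rho(\mathcal{A})$ is an $H$-eigenvalue; the \emph{nonnegativity} of an associated eigenvector, which is what makes the lower bound work, is the extra ingredient borrowed from Perron--Frobenius theory.
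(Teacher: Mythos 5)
The paper does not prove this statement at all---it is imported verbatim as Theorem~2 of \cite{qi2013symmetric}---so there is no internal proof to compare against. Your argument is correct and is essentially the standard one underlying the cited result: attainment on the compact simplex-like set, KKT at a maximizer (with the nonnegativity of $\mathcal{A}$ and of $x^\ast$ upgrading the inequality multipliers on the inactive coordinates to equalities, so the maximizer is a genuine $H$-eigenvector), and the Perron--Frobenius nonnegative eigenvector for $\rho(\mathcal{A})=\lambda_{\max}(\mathcal{A})$ to get the reverse inequality. You are right to flag that the last ingredient is not contained in Theorem~\ref{lemma:nonnegative_tensor_pho} as quoted here and must be borrowed from the fuller Perron--Frobenius theory for nonnegative tensors; with that citation in hand, the proof is complete.
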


Using an approach similar to the one used to prove {\citep[][Thm.~4.5]{zhang2014m}}, we can establish a slight generalization of that result in Proposition~\ref{prop:nonnegative_tensor_sum}.
\begin{proposition}
	\label{prop:nonnegative_tensor_sum}
	For $m, n \in \mathbb{N}$, if both $\mathcal{A} \in \mathbb{S}_{m,n}$ and $\mathcal{B} \in \mathbb{S}_{m,n}$ are nonnegative tensors, then $\rho(\mathcal{A} + \mathcal{B}) \leq \rho(\mathcal{A}) + \rho(\mathcal{B})$.
\end{proposition}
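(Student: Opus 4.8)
The plan is to use the variational characterization of the largest $H$-eigenvalue of nonnegative tensors provided by Theorem~\ref{lemma:nonnegative_tensor_max}, combined with Theorem~\ref{lemma:nonnegative_tensor_pho}, which tells us that for a nonnegative symmetric tensor the spectral radius is attained as an $H$-eigenvalue and hence equals $\lambda_{\max}$. Concretely, since $\mathcal{A}$, $\mathcal{B}$, and $\mathcal{A}+\mathcal{B}$ are all nonnegative symmetric tensors, Theorem~\ref{lemma:nonnegative_tensor_pho} gives $\rho(\mathcal{A}) = \lambda_{\max}(\mathcal{A})$, $\rho(\mathcal{B}) = \lambda_{\max}(\mathcal{B})$, and $\rho(\mathcal{A}+\mathcal{B}) = \lambda_{\max}(\mathcal{A}+\mathcal{B})$. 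Thus it suffices to prove the inequality $\lambda_{\max}(\mathcal{A}+\mathcal{B}) \le \lambda_{\max}(\mathcal{A}) + \lambda_{\max}(\mathcal{B})$.

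First I would fix a maximizer $x^* \in \R^n_+$ with $\sum_{i=1}^n (x^*_i)^m = 1$ achieving $\lambda_{\max}(\mathcal{A}+\mathcal{B}) = (\mathcal{A}+\mathcal{B})(x^*)^m$; such a maximizer exists because the feasible set is compact and the objective continuous. Then I would use the linearity of the map $\mathcal{C} \mapsto \mathcal{C}x^m$ (and of $x \mapsto \mathcal{C}x^m$ being a fixed multilinear form evaluated at $x$) to write $(\mathcal{A}+\mathcal{B})(x^*)^m = \mathcal{A}(x^*)^m + \mathcal{B}(x^*)^m$. Since $x^*$ is feasible for both of the individual optimization problems in Theorem~\ref{lemma:nonnegative_tensor_max}, we get $\mathcal{A}(x^*)^m \le \lambda_{\max}(\mathcal{A})$ and $\mathcal{B}(x^*)^m \le \lambda_{\max}(\mathcal{B})$. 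Adding these gives $\rho(\mathcal{A}+\mathcal{B}) = \lambda_{\max}(\mathcal{A}+\mathcal{B}) \le \lambda_{\max}(\mathcal{A}) + \lambda_{\max}(\mathcal{B}) = \rho(\mathcal{A}) + \rho(\mathcal{B})$, which is the claim.

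I do not anticipate a serious obstacle here; the only point requiring a little care is making sure all three tensors qualify for the hypotheses of the cited theorems, which is immediate since the sum of two nonnegative (resp. symmetric) tensors is nonnegative (resp. symmetric), so $\mathcal{A}+\mathcal{B} \in \mathbb{S}_{m,n}$ is a nonnegative tensor and Theorems~\ref{lemma:nonnegative_tensor_pho} and~\ref{lemma:nonnegative_tensor_max} apply to it. One should also note that the equality $\rho(\mathcal{C}) = \lambda_{\max}(\mathcal{C})$ for a nonnegative symmetric $\mathcal{C}$ follows because $\rho(\mathcal{C})$ is a (real, nonnegative) $H$-eigenvalue by Theorem~\ref{lemma:nonnegative_tensor_pho} and $\rho(\mathcal{C})$ is the largest modulus of any eigenvalue, hence in particular an upper bound on every $H$-eigenvalue, so it coincides with the largest $H$-eigenvalue $\lambda_{\max}(\mathcal{C})$. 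With that identification in hand, the argument reduces to the elementary subadditivity of a pointwise maximum of linear functionals over a common feasible set.
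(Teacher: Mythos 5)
Your proposal is correct and follows essentially the same route as the paper: both identify $\rho$ with $\lambda_{\max}$ for nonnegative symmetric tensors via Theorem~\ref{lemma:nonnegative_tensor_pho}, and then apply the variational characterization of Theorem~\ref{lemma:nonnegative_tensor_max} to deduce subadditivity of $\lambda_{\max}$ (the paper phrases this as relaxing the constraint $x=y$ in a joint maximization, while you evaluate both forms at a common maximizer $x^*$ --- the same argument).
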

\begin{proof} 
	Let $\mathcal{D} \in \mathbb{T}_{m,n}$. From the definition of $\rho(\mathcal{D})$ and $\lambda_{\max}(\mathcal{D})$, it clearly follows that $\rho(\mathcal{D}) \geq \lambda_{\max}(\mathcal{D})$. If $\mathcal{D}$ is a symmetric nonnegative tensor, it then follows from Theorem \ref{lemma:nonnegative_tensor_pho} that 
	\begin{equation}
		\label{eq:rho_inequality_pf}
		\rho(\mathcal{D}) = \lambda_{\max}(\mathcal{D}).
	\end{equation}
	Let $\mathcal{A} \in \mathbb{S}_{m,n}$, and $\mathcal{B} \in \mathbb{S}_{m,n}$ be nonnegative tensors. Then we have from equation \eqref{eq:rho_inequality_pf} that $\rho(\mathcal{A}) = \lambda_{\max}(\mathcal{A})$ and $\rho(\mathcal{B}) = \lambda_{\max}(\mathcal{B})$. Furthermore, it follows from Theorem \ref{lemma:nonnegative_tensor_max} that
	\begin{align*}
		\lambda_{\max}(\mathcal{A} + \mathcal{B}) &= {\max} \left\{  (\mathcal{A} + \mathcal{B}) x^m: x\in \R^n_+, \sum_{i=1}^n x_i^m = 1 \right\} \\ 
		& = {\max} \left\{  \mathcal{A} x^m + \mathcal{B} y^m : x,y\in \R^n_+, \sum_{i=1}^n x_i^m = 1, \sum_{i=1}^n y_i^m = 1, x = y\right\} \\ 
		& \leq {\max} \left\{  \mathcal{A} x^m: x\in \R^n_+, \sum_{i=1}^n x_i^m = 1\right\}  \\
		&\quad + {\max} \left\{  \mathcal{B} y^m: y\in \R^n_+, \sum_{i=1}^n y_i^m = 1\right\} \\
		&	= \lambda_{\max}(\mathcal{A} ) + \lambda_{\max}(\mathcal{B}).
	\end{align*}
	
	To finish, notice that $\mathcal{A} + \mathcal{B}$ is a symmetric nonnegative tensor. Thus after using equation \eqref{eq:rho_inequality_pf} for the tensor $\mathcal{A} + \mathcal{B}$, we conclude that $\rho(\mathcal{A} + \mathcal{B}) = \lambda_{\max}(\mathcal{A} + \mathcal{B}) \leq  \lambda_{\max}(\mathcal{A}) +  \lambda_{\max}(\mathcal{B}) = \rho(\mathcal{A}) + \rho(\mathcal{B}) $.
\end{proof}

\begin{proposition}[{\citep[][Prop.~2.7]{kannan2015some}}]
	\label{prop:z_tensor_m_tensor}
	For $m, n \in \mathbb{N}$, let $\mathcal{B} \in \mathbb{S}_{m,n}$ be a $Z$-tensor such that $\mathcal{A} \leq \mathcal{B}$ where $\mathcal{A}$ is an $M$-tensor. Then $\mathcal{B}$ is also an $M$-tensor.
\end{proposition}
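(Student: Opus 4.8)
The plan is to build an explicit $M$-tensor decomposition of $\mathcal{B}$ by rewriting $\mathcal{A}$ and $\mathcal{B}$ with a common shift. Since $\mathcal{A}$ is an $M$-tensor, fix a scalar $s_1 \geq 0$ and a nonnegative tensor $\mathcal{D}_1$ with $\mathcal{A} = s_1 \mathcal{I} - \mathcal{D}_1$ and $s_1 \geq \rho(\mathcal{D}_1)$; here $\mathcal{D}_1 = s_1\mathcal{I} - \mathcal{A}$ is automatically in $\mathbb{S}_{m,n}$ because $\mathcal{A} \in \mathbb{S}_{m,n}$. As $\mathcal{B}$ is a $Z$-tensor, all of its off-diagonal entries are nonpositive, so for any scalar $t \geq \max\{\, s_1,\ \max_{i \in [n]} b_{ii\dots i}\,\}$ both $\mathcal{D}_{\mathcal{A}} := t\mathcal{I} - \mathcal{A}$ and $\mathcal{D}_{\mathcal{B}} := t\mathcal{I} - \mathcal{B}$ are symmetric nonnegative tensors, and this yields $Z$-tensor decompositions $\mathcal{A} = t\mathcal{I} - \mathcal{D}_{\mathcal{A}}$ and $\mathcal{B} = t\mathcal{I} - \mathcal{D}_{\mathcal{B}}$ sharing the same $t$. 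It then suffices to prove $\rho(\mathcal{D}_{\mathcal{B}}) \leq t$.

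First I would bound $\rho(\mathcal{D}_{\mathcal{A}})$: writing $\mathcal{D}_{\mathcal{A}} = (t - s_1)\mathcal{I} + \mathcal{D}_1$ as a sum of the symmetric nonnegative tensors $(t-s_1)\mathcal{I}$ and $\mathcal{D}_1$, Proposition~\ref{prop:nonnegative_tensor_sum} together with $\rho(c\mathcal{I}) = c$ for $c \geq 0$ gives $\rho(\mathcal{D}_{\mathcal{A}}) \leq (t - s_1) + \rho(\mathcal{D}_1) \leq t$. Next, the hypothesis $\mathcal{A} \leq \mathcal{B}$ is, after subtraction from $t\mathcal{I}$, exactly the entrywise inequality $0 \leq \mathcal{D}_{\mathcal{B}} \leq \mathcal{D}_{\mathcal{A}}$ between symmetric nonnegative tensors. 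The crux is the monotonicity of the spectral radius under this domination, and the variational formula supplies it: by Theorem~\ref{lemma:nonnegative_tensor_max}, together with $\rho = \lambda_{\text max}$ for symmetric nonnegative tensors (equation~\eqref{eq:rho_inequality_pf}), for every $x \in \R^n_+$ with $\sum_{i=1}^n x_i^m = 1$ all monomials $x_{i_1}\cdots x_{i_m}$ are nonnegative, so $\mathcal{D}_{\mathcal{B}}x^m \leq \mathcal{D}_{\mathcal{A}}x^m$; maximizing over this set gives $\rho(\mathcal{D}_{\mathcal{B}}) = \lambda_{\text max}(\mathcal{D}_{\mathcal{B}}) \leq \lambda_{\text max}(\mathcal{D}_{\mathcal{A}}) = \rho(\mathcal{D}_{\mathcal{A}}) \leq t$. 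Hence $\mathcal{B} = t\mathcal{I} - \mathcal{D}_{\mathcal{B}}$ with $\mathcal{D}_{\mathcal{B}} \geq 0$ and $t \geq \rho(\mathcal{D}_{\mathcal{B}})$, so $\mathcal{B}$ is an $M$-tensor.

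The main obstacle is precisely this monotonicity step: the implication $0 \leq \mathcal{D}_{\mathcal{B}} \leq \mathcal{D}_{\mathcal{A}} \Rightarrow \rho(\mathcal{D}_{\mathcal{B}}) \leq \rho(\mathcal{D}_{\mathcal{A}})$ fails without symmetry and nonnegativity, so the argument must route through Theorems~\ref{lemma:nonnegative_tensor_pho} and~\ref{lemma:nonnegative_tensor_max} rather than through any purely algebraic manipulation of eigenpairs. The remaining ingredients — choosing a large enough common shift $t$, checking that $\mathcal{D}_{\mathcal{A}}$ and $\mathcal{D}_{\mathcal{B}}$ are nonnegative, and the bound coming from Proposition~\ref{prop:nonnegative_tensor_sum} — are routine bookkeeping.
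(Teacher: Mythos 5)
Your proof is correct, but note that the paper does not actually prove this statement: Proposition~\ref{prop:z_tensor_m_tensor} is imported verbatim from \cite{kannan2015some} (their Proposition~2.7) and used as a black box, so there is no in-paper argument to compare against. What you have produced is a valid, self-contained derivation using only tools the paper does develop. The decomposition step is sound: choosing the common shift $t \geq \max\{s_1, \max_i b_{ii\dots i}\}$ makes both $\mathcal{D}_{\mathcal{A}} = (t-s_1)\mathcal{I} + \mathcal{D}_1$ and $\mathcal{D}_{\mathcal{B}} = t\mathcal{I} - \mathcal{B}$ entrywise nonnegative (the latter because $\mathcal{B}$ is a $Z$-tensor, hence has nonpositive off-diagonal entries), and $\rho\bigl((t-s_1)\mathcal{I}\bigr) = t - s_1$ combined with Proposition~\ref{prop:nonnegative_tensor_sum} gives $\rho(\mathcal{D}_{\mathcal{A}}) \leq t$. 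The monotonicity step is indeed the crux, and your route through Theorem~\ref{lemma:nonnegative_tensor_max} and the identity $\rho = \lambda_{\max}$ for symmetric nonnegative tensors (equation~\eqref{eq:rho_inequality_pf}, which rests on Theorem~\ref{lemma:nonnegative_tensor_pho}) is exactly right: for $x \in \R^n_+$ every monomial $x_{i_1}\cdots x_{i_m}$ is nonnegative, so $0 \leq \mathcal{D}_{\mathcal{B}} \leq \mathcal{D}_{\mathcal{A}}$ entrywise forces $\mathcal{D}_{\mathcal{B}}x^m \leq \mathcal{D}_{\mathcal{A}}x^m$ pointwise on the feasible set, hence $\rho(\mathcal{D}_{\mathcal{B}}) \leq \rho(\mathcal{D}_{\mathcal{A}}) \leq t$. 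One caveat worth flagging: your argument leans on the symmetry of $\mathcal{A}$ (needed for Theorems~\ref{lemma:nonnegative_tensor_pho} and~\ref{lemma:nonnegative_tensor_max} to apply to $\mathcal{D}_{\mathcal{A}}$), which the statement does not assert explicitly but which is covered by the paper's blanket symmetry convention and holds in the paper's only use of the proposition. The original result in \cite{kannan2015some} is proved for general (not necessarily symmetric) tensors via Perron--Frobenius-type monotonicity of $\rho$ for nonnegative tensors, so your proof is slightly less general but gains the advantage of being verifiable entirely from the results stated in this paper.
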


\begin{proposition}
	\label{prop:convex_cone_GDD}
	For $m, n \in \mathbb{N}$, $GDD_{m,n}^+$ is a convex cone.
\end{proposition}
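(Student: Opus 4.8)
The plan is to show $GDD_{m,n}^+$ is closed under nonnegative scaling and under addition. Closure under nonnegative scaling is immediate: if $\mathcal{A} \in GDD_{m,n}^+$ with certificate $D$ (so $\mathcal{A}DD\cdots D$ is DD by Proposition~\ref{prop: alter_gdd_def}), then for $\lambda \geq 0$ the tensor $(\lambda\mathcal{A})DD\cdots D = \lambda(\mathcal{A}DD\cdots D)$ is still DD (each inequality~\eqref{inner1:diag} is homogeneous), and scaling preserves nonnegativity of the diagonal; also $0 \in GDD_{m,n}^+$ trivially. So the real content is additivity: given $\mathcal{A}, \mathcal{B} \in GDD_{m,n}^+$, produce a single positive diagonal $D$ that simultaneously scales $\mathcal{A}+\mathcal{B}$ to a DD tensor.

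First I would translate GDD$^+$-membership into the language of $M$-tensors and comparison tensors, using Corollary~\ref{coro:gdd_h}: $\mathcal{A} \in GDD_{m,n}^+$ iff $\mathcal{A}$ is an $H^+$-tensor iff $M(\mathcal{A})$ is an $M$-tensor (and $\mathcal{A}$ has nonnegative diagonal). Writing the comparison tensors in $Z$-tensor form, say $M(\mathcal{A}) = s_A \mathcal{I} - \mathcal{D}_A$ and $M(\mathcal{B}) = s_B \mathcal{I} - \mathcal{D}_B$ with $\mathcal{D}_A, \mathcal{D}_B$ symmetric nonnegative and $s_A \geq \rho(\mathcal{D}_A)$, $s_B \geq \rho(\mathcal{D}_B)$. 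The key observation is that $M(\mathcal{A}+\mathcal{B}) \geq M(\mathcal{A}) + M(\mathcal{B})$ entrywise: on the diagonal the triangle inequality $|a_{ii\cdots i} + b_{ii\cdots i}| \leq |a_{ii\cdots i}| + |b_{ii\cdots i}|$ goes the right way since off-diagonal entries of $M(\cdot)$ are negated, while off the diagonal $-|a + b| \geq -|a| - |b|$. Now $M(\mathcal{A}) + M(\mathcal{B}) = (s_A + s_B)\mathcal{I} - (\mathcal{D}_A + \mathcal{D}_B)$, and by Proposition~\ref{prop:nonnegative_tensor_sum}, $\rho(\mathcal{D}_A + \mathcal{D}_B) \leq \rho(\mathcal{D}_A) + \rho(\mathcal{D}_B) \leq s_A + s_B$, so $M(\mathcal{A}) + M(\mathcal{B})$ is an $M$-tensor. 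Since $M(\mathcal{A}+\mathcal{B})$ is a $Z$-tensor (immediate from its definition~\eqref{eq:comptensor}) dominating an $M$-tensor entrywise, Proposition~\ref{prop:z_tensor_m_tensor} gives that $M(\mathcal{A}+\mathcal{B})$ is an $M$-tensor, i.e.\ $\mathcal{A}+\mathcal{B}$ is an $H$-tensor. Finally the diagonal entries of $\mathcal{A}+\mathcal{B}$ are sums of nonnegative numbers, hence nonnegative, so $\mathcal{A}+\mathcal{B}$ is an $H^+$-tensor, which by Corollary~\ref{coro:gdd_h} means $\mathcal{A}+\mathcal{B} \in GDD_{m,n}^+$.

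Combining closure under addition and nonnegative scaling, $GDD_{m,n}^+$ is a convex cone. I expect the main obstacle to be the additivity step, and specifically getting the subadditivity of spectral radii on symmetric nonnegative tensors routed correctly through Proposition~\ref{prop:nonnegative_tensor_sum}; this is why the proof is phrased in terms of comparison tensors and $M$-tensors rather than attempting to directly cook up a common diagonal scaling $D$, which would require a more delicate argument (the scalings for $\mathcal{A}$ and $\mathcal{B}$ need not coincide, and there is no obvious way to interpolate between them at the level of the DD inequalities). One should double-check that Proposition~\ref{prop:z_tensor_m_tensor} as stated applies — it requires $\mathcal{A} \leq \mathcal{B}$ with $\mathcal{A}$ an $M$-tensor and $\mathcal{B}$ a symmetric $Z$-tensor, which is exactly our situation with $\mathcal{A} = M(\mathcal{A})+M(\mathcal{B})$ and $\mathcal{B} = M(\mathcal{A}+\mathcal{B})$ — and that all the comparison/sum tensors involved are genuinely symmetric, which follows from $\mathcal{A},\mathcal{B} \in \mathbb{S}_{m,n}$.
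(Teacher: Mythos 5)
Your proposal is correct and follows essentially the same route as the paper's proof: reduce to comparison tensors via Corollary~\ref{coro:gdd_h}, show $M(\mathcal{A})+M(\mathcal{B})$ is an $M$-tensor using Proposition~\ref{prop:nonnegative_tensor_sum}, and conclude via the entrywise bound $M(\mathcal{A}+\mathcal{B})\geq M(\mathcal{A})+M(\mathcal{B})$ and Proposition~\ref{prop:z_tensor_m_tensor}. One small wording slip: on the diagonal the needed inequality is $|a_{ii\cdots i}+b_{ii\cdots i}|\geq |a_{ii\cdots i}|+|b_{ii\cdots i}|$, which holds (with equality) because the diagonal entries are nonnegative, not because of the triangle inequality or the sign convention on off-diagonal entries.
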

\begin{proof}
	Let $\mathcal{A} = (a_{i_1\dots i_m})\in GDD_{m,n}^+  $ and $\mathcal{B}  = (b_{i_1\dots i_m})\in  GDD_{m,n}^+  $. From Corollary~\ref{coro:gdd_h}, both $\mathcal{A}$ and $\mathcal{B}$ are symmetric $H^+$-tensors. Thus $M(\mathcal{A})$ and $M(\mathcal{B})$ are symmetric $M$-tensors. That is, there exist nonnegative scalars $s_1, s_2$ and nonnegative tensors $\mathcal{D}_1$ and~$\mathcal{D}_2$ such that $M(\mathcal{A}) = s_1 I - \mathcal{D}_1$, $M(\mathcal{B}) = s_2 I - \mathcal{D}_2$ and $s_1 \geq \rho(\mathcal{D}_1)$, $s_2 \geq \rho(\mathcal{D}_2)$. Then~$M(\mathcal{A}) + M(\mathcal{B}) = (s_1 + s_2)I - (\mathcal{D}_1 + \mathcal{D}_2)$. Since $s_1 + s_2 \geq 0$ and $D_1 + D_2$ is a nonnegative tensor, $M(\mathcal{A}) + M(\mathcal{B})$ is a symmetric $Z$-tensor. Also, from Proposition~\ref{prop:nonnegative_tensor_sum}, if follows that~$\rho(\mathcal{D}_1 + \mathcal{D}_2) \leq \rho(\mathcal{D}_1 ) + \rho(\mathcal{D}_2) \leq s_1 + s_2$. Thus, $M(\mathcal{A}) + M(\mathcal{B})$ is also a symmetric $M$-tensor.

	Next, we prove that $M(\mathcal{A}+\mathcal{B})$ is a $Z$-tensor. Recall that $M(\mathcal{A}+\mathcal{B})$ is the comparison matrix of $\mathcal{A}+\mathcal{B}$. Thus, all its diagonal elements are nonnegative and all off-diagonal elements are nonpositive. Denote $s = { \max}\{ |a_{ii\dots i}| + |b_{ii\dots i}| ,  i\in[n]  \}$. Then $M(\mathcal{A}+\mathcal{B}) = s I - (sI -M(\mathcal{A}+\mathcal{B}) )$ where $sI -M(\mathcal{A}+\mathcal{B})$ is a nonnegative tensor. Thus, $M(\mathcal{A}+\mathcal{B}) $ is a $Z$-tensor.

	From the definition of comparison tensors and the fact that $\mathcal{A}, \mathcal{B}$ have nonnegative diagonal elements, $M(\mathcal{A}+\mathcal{B}) \geq M(\mathcal{A}) + M(\mathcal{B})$ componentwise. From the fact that $M(\mathcal{A}) + M(\mathcal{B})$ is an $M $-tensor and $M(\mathcal{A}+\mathcal{B})$ is a $Z$-tensor, it follows from Proposition~\ref{prop:z_tensor_m_tensor} that $M(\mathcal{A}+\mathcal{B})$ is also an $M$-tensor. Thus $\mathcal{A}+\mathcal{B}$ is a symmetric $H^+ $-tensor, and from Corollary \ref{coro:gdd_h}, $\mathcal{A}+\mathcal{B}$ is a GDD$^+$ tensor. Thus, $\mathcal{A} + \mathcal{B} \in GDD_{m,n}^+$. This, together with the fact that $\mathcal{A} \in GDD_{m,n}^+$ implies~$\lambda \mathcal{A} \in GDD_{m,n}^+$ for any nonnegative scalar $\lambda$, implies that $GDD_{m,n}^+$ is a convex cone.
	
\end{proof}

\begin{theorem}
	\label{theo:gdd_decomp}
	For $m, n \in \mathbb{N}$, $\mathcal{A} \in  GDD_{m,n}^+$ if and only if $\mathcal{A} = \sum_{i =1}^{r} B_i$ where $r \in \mathbb{N}$ and $B_i \in \mathbb{D}_{m,n}  \cap GDD_{m,n}^+$.
\end{theorem}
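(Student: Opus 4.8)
The statement is an equivalence, so I would prove the two implications separately. The ``if'' direction is immediate: if $\mathcal{A} = \sum_{i=1}^{r} B_i$ with each $B_i \in \mathbb{D}_{m,n}\cap GDD^+_{m,n}\subseteq GDD^+_{m,n}$, then, since $GDD^+_{m,n}$ is a convex cone by Proposition~\ref{prop:convex_cone_GDD}, the finite sum $\sum_{i=1}^{r} B_i$ again lies in $GDD^+_{m,n}$.

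For the ``only if'' direction, the plan is to pull the DD$^+$ conic decomposition of Proposition~\ref{prop:DDT_cone} back through the diagonal scaling furnished by the GDD property. First, given $\mathcal{A}\in GDD^+_{m,n}$, Proposition~\ref{prop: alter_gdd_def} supplies a positive diagonal matrix $D$ (entries $d_i>0$) such that $\widetilde{\mathcal{A}} := \mathcal{A}DD\cdots D$ is diagonally dominant; since $a_{ii\cdots i}\ge 0$ and $\widetilde{\mathcal{A}}_{ii\cdots i}=a_{ii\cdots i}d_i^{m}$, in fact $\widetilde{\mathcal{A}}\in DD^+_{m,n}$. Next, applying the decomposition~\eqref{eq:DD_cone} (equivalently, Proposition~\ref{prop:DDT_cone}) to $\widetilde{\mathcal{A}}$, I write $\widetilde{\mathcal{A}}=\sum_{k=1}^{r} c_k\,\mathcal{V}_k$ with $c_k\ge 0$ and $\mathcal{V}_k\in\mathbb{E}_{m,n}$. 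Since $D^{-1}$ is again a positive diagonal matrix and its associated transformation is the inverse of the one for $D$ — namely $(\mathcal{B}D^{-1}D^{-1}\cdots D^{-1})_{i_1\cdots i_m}=b_{i_1\cdots i_m}\,d_{i_1}^{-1}\cdots d_{i_m}^{-1}$ — I recover $\mathcal{A}=\widetilde{\mathcal{A}}D^{-1}D^{-1}\cdots D^{-1}=\sum_{k=1}^{r} c_k\,\mathcal{V}_k D^{-1}D^{-1}\cdots D^{-1}$. Setting $B_k:=c_k\,\mathcal{V}_k D^{-1}D^{-1}\cdots D^{-1}$ and discarding terms with $c_k=0$ gives $\mathcal{A}=\sum_k B_k$, and it remains to check $B_k\in\mathbb{D}_{m,n}\cap GDD^+_{m,n}$.

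This check has two parts. For membership in $\mathbb{D}_{m,n}$: the passage $\mathcal{V}_k\mapsto B_k$ multiplies the entry in each position $(j_1,\dots,j_m)$ by the nonnegative constant $c_k\,d_{j_1}^{-1}\cdots d_{j_m}^{-1}$, so it sends zero entries to zero entries; by the definition of $\mathbb{E}_{m,n}$ and formula~\eqref{eq:DD_cone}, each $\mathcal{V}_k$ is either some $\mathcal{V}^{c,i_1\cdots i_m}$ with $(i_1,\dots,i_m)\in\mathscr{D}_n^m$ — in which case $B_k$ stays in $\mathbb{D}_{m,n}^{i_1\cdots i_m}\subseteq\mathbb{D}_{m,n}$ — or one of the diagonal tensors $\mathcal{V}^{0,ii\cdots i}$, which lies in $\mathbb{D}_{m,n}^{i_1\cdots i_m}$ for any $(i_1,\dots,i_m)\in\mathscr{D}_n^m$ that contains the index $i$ (such an index exists when $n\ge 2$). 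For membership in $GDD^+_{m,n}$: a direct computation gives $B_k DD\cdots D=c_k\,\mathcal{V}_k$, which lies in $DD^+_{m,n}$ since $\mathbb{E}_{m,n}\subseteq DD^+_{m,n}$ and $DD^+_{m,n}$ is a cone; Proposition~\ref{prop: alter_gdd_def} then gives that $B_k$ is GDD, and $(B_k)_{ii\cdots i}=c_k(\mathcal{V}_k)_{ii\cdots i}d_i^{-m}\ge 0$ shows its diagonal entries are nonnegative, so $B_k\in GDD^+_{m,n}$.

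I do not expect a real obstacle here; the proof is a chaining of Propositions~\ref{prop: alter_gdd_def}, \ref{prop:DDT_cone} and~\ref{prop:convex_cone_GDD}. The one point deserving care is noticing that the \emph{single} diagonal matrix $D$ produced by the GDD property does triple duty: it commutes with the conic DD$^+$ decomposition of $\widetilde{\mathcal{A}}$, its inverse preserves each sparse support class $\mathbb{D}_{m,n}^{i_1\cdots i_m}$, and it keeps every rescaled extreme tensor inside $GDD^+_{m,n}$ (via Proposition~\ref{prop: alter_gdd_def}) — so that a single change of variables handles all the summands at once.
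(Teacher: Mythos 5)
Your proof is correct and follows essentially the same route as the paper's: scale to a DD$^+$ tensor via Proposition~\ref{prop: alter_gdd_def}, decompose it with Proposition~\ref{prop:DDT_cone}, scale back by $D^{-1}$, and handle the converse with Proposition~\ref{prop:convex_cone_GDD}. If anything, you verify the membership claims ($B_k\in\mathbb{D}_{m,n}\cap GDD^+_{m,n}$, including why the diagonal tensors $\mathcal{V}^{0,ii\cdots i}$ sit inside $\mathbb{D}_{m,n}$ when $n\ge 2$) more carefully than the paper does.
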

\begin{proof}
	For $m, n \in \mathbb{N}$, let $\mathcal{A} \in GDD^+_{m,n}$. Then, from Proposition \ref{prop: alter_gdd_def}, there exists a positive diagonal matrix $D$ such that $\mathcal{B}: = \mathcal{A} DD \cdots D \in DD^+_{m,n}$. From Proposition~\ref{prop:DDT_cone}, it follows that there exist $r \in \mathbb{N}$, $\lambda_i \geq 0$, $\mathcal{C}_i \in \mathbb{E}_{m,n} \subset \mathbb{D}_{m,n} \cap DD_{m,n}^+$ for~$ i \in [r]$ such that $\mathcal{B} = \sum_{i=1}^{r} \lambda_i \mathcal{C}_i$. Then $\mathcal{A} = \sum_{i=1}^{r} \lambda_i \mathcal{C}_i D^{-1} \cdots D^{-1} D^{-1}$. Let $\mathcal{B}_i =\lambda_i \mathcal{C}_i D^{-1} \cdots D^{-1} D^{-1}$ for all $ i \in [r]$. Then the {only if} statement follows after noticing that for all $ i \in [r]$, $\mathcal{B}_i \in GDD_{m,n}^+$ and $\mathcal{B}_i \in \mathbb{D}_{m,n}$ (as multiplying with positive numbers will not affect the sparse structure of tensors $\mathcal{C}_i \in \mathbb{D}_{m,n}$, $i \in [r]$). For the if statement, note that if $\mathcal{A} = \sum_{i=1}^r \mathcal{B}_i$ with $\mathcal{B}_i \in \mathbb{D}_{m,n}^+ \cap GDD_{m,n}^+$ for all $i \in [r]$, then, from Proposition \ref{prop:convex_cone_GDD}, we have $\mathcal{A} \in GDD_{m,n}^+$.
\end{proof}

The matrix version (i.e. $m=2$) of Theorem \ref{theo:gdd_decomp} was presented in \citep{ahmadi2019dsos,boman2005factor}.

\begin{lemma}[{\citep[][Lem.~3.8]{ahmadi2019dsos}}]
	\label{lemma:gdd_matrix_decomp}
	For $n \in \mathbb{N}$, if matrix $A \in \mathbb{S}_{2,n} $, then $A $ is a GDD$^+$ matrix if and only if $A = \sum_{i<j}  M^{ij}$ where each $M^{ij} \in \mathbb{S}_{2,n}$ with zeros everywhere except for four entries $(M^{ij})_{ii}$, $(M^{ij})_{ij}$, $(M^{ij})_{ji}$, $(M^{ij})_{jj}$ which make $M^{ij}$ symmetric and positive semidefinite. 
\end{lemma}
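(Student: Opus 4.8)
The plan is to obtain Lemma~\ref{lemma:gdd_matrix_decomp} as the specialization of Theorem~\ref{theo:gdd_decomp} to order $m=2$, after translating the abstract sparsity class $\mathbb{D}_{2,n}$ and the cone $GDD^+_{2,n}$ into concrete matrix language. First I would unpack the relevant index sets for $m=2$: here $\mathscr{D}_n^2 = \{(i,j) : 1 \le i < j \le n\}$, and for such a pair $\mathscr{P}_{ij} \cup \mathscr{Q}_{ij} = \{(i,j),(j,i),(i,i),(j,j)\}$, so $\mathbb{D}_{2,n}^{ij}$ is exactly the set of symmetric $n \times n$ matrices that vanish outside those four entries, and $\mathbb{D}_{2,n} = \bigcup_{i<j} \mathbb{D}_{2,n}^{ij}$. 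Thus the support pattern appearing in $\mathbb{D}_{2,n}$ is precisely the one of the matrices $M^{ij}$ in the statement, and the task reduces to matching the membership conditions "$\in GDD^+_{2,n}$" and "symmetric positive semidefinite" for matrices confined to a single such block.

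For the \emph{only if} direction, I would start from a GDD$^+$ matrix $A$ and apply Theorem~\ref{theo:gdd_decomp} with $m=2$ to write $A = \sum_{k=1}^{r} B_k$ with each $B_k \in \mathbb{D}_{2,n} \cap GDD^+_{2,n}$. Each $B_k$ is then supported on the four entries of some pair $\{i,j\}$ and is GDD$^+$; writing $a = (B_k)_{ii} \ge 0$, $b = (B_k)_{jj} \ge 0$ (the nonnegativity comes from the ``$+$''), $c = (B_k)_{ij}$, and letting $D = \mathrm{diag}(d_1,\dots,d_n)$ be a positive diagonal scaling for which $DB_kD$ is diagonally dominant (Proposition~\ref{prop: alter_gdd_def}), the dominance inequalities on rows $i$ and $j$ read $d_i\,a \ge d_j\,|c|$ and $d_j\,b \ge d_i\,|c|$; multiplying them gives $ab \ge c^2$, so the $2\times 2$ principal submatrix on $\{i,j\}$ is positive semidefinite and hence $B_k$ (padded with zeros) is a PSD matrix. (Alternatively one may argue that $B_k$ is $H^+$ by Corollary~\ref{coro:gdd_h} and therefore PSD by Theorem~\ref{theo:H_plus_SOS} with $m=2$.) Grouping the $B_k$ by their support pair and summing then yields, for each $i<j$, a matrix $M^{ij} \in \mathbb{D}_{2,n}^{ij}$ which is PSD as a sum of PSD matrices, with $A = \sum_{i<j} M^{ij}$.

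For the \emph{if} direction, suppose $A = \sum_{i<j} M^{ij}$ with each $M^{ij}$ symmetric, supported on the four entries of $\{i,j\}$, and positive semidefinite. Then each $M^{ij}$ is GDD$^+$: with $a = (M^{ij})_{ii} \ge 0$, $b = (M^{ij})_{jj} \ge 0$, $c = (M^{ij})_{ij}$ and $ab \ge c^2$, the diagonal matrix $D$ with $d_i = \sqrt{b}$, $d_j = \sqrt{a}$ and $d_k = 1$ otherwise makes $D M^{ij} D$ diagonally dominant (the degenerate case $a = 0$ or $b = 0$ forces $c = 0$ by PSDness, so $M^{ij}$ is diagonal and already DD$^+$), and its diagonal $\{a,b,0,\dots,0\}$ is nonnegative. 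Since $GDD^+_{2,n}$ is a convex cone by Proposition~\ref{prop:convex_cone_GDD}, the sum $A$ lies in $GDD^+_{2,n}$, i.e. $A$ is a GDD$^+$ matrix.

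The one genuinely delicate point, which I expect to be the crux, is the equivalence "$2\times2$-block-supported matrix is GDD$^+$ $\iff$ it is positive semidefinite": this is where the explicit diagonal scalings must be produced in both directions and the rank-deficient degenerate cases ($a=0$ or $b=0$) disposed of, and where the role of the nonnegative-diagonal hypothesis is essential (diagonal dominance alone controls only $|a||b|\ge c^2$, not the sign of $a,b$). Everything else is bookkeeping once Theorem~\ref{theo:gdd_decomp} and Proposition~\ref{prop:convex_cone_GDD} are available.
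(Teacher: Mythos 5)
Your proof is correct and takes exactly the route the paper itself indicates: the paper does not prove this lemma but cites it from \cite{ahmadi2019dsos} and remarks only that it is the $m=2$ special case of Theorem~\ref{theo:gdd_decomp}, since a matrix supported on a single $\{i,j\}$ block is positive semidefinite if and only if it is GDD$^+$. You have simply supplied the details of that observation (the scalings $d_i a \ge d_j|c|$, $d_j b \ge d_i|c|$ in one direction and $d_i=\sqrt{b}$, $d_j=\sqrt{a}$ in the other, plus the degenerate cases) together with the specialization of $\mathbb{D}_{2,n}$ and Proposition~\ref{prop:convex_cone_GDD}, so there is nothing to correct.
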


It is easy to see that $M^{ij}$ in Lemma \ref{lemma:gdd_matrix_decomp} is positive semidefinite if and only if $M^{ij}$ is a GDD$^+$ matrix. Thus, Lemma~\ref{lemma:gdd_matrix_decomp} can be regarded as a special case of Theorem~\ref{theo:gdd_decomp}. In Theorem \ref{theo:gdd_characterization}, we provide sufficient and necessary conditions for a tensor to be in $\mathbb{D}_{m,n} \cap  GDD_{m,n}^+ $ (i.e., a sparse GDD$^+$ tensor).

\begin{theorem} 
	\label{theo:gdd_characterization}
	Let $m, n \in \mathbb{N}$, $(i_1, \dots, i_m) \in \mathscr{D}_{n}^m \cup  \mathscr{F}_{n}^m$, and a tensor $\mathcal{B} = (b_{p_1  \dots p_m}) \in \mathbb{D}^{i_1   \dots i_m}_{m,n}  $ be given. Then,
	\begin{enumerate}[label=(\roman*)]
		\item if $(i_1, \dots, i_m) \in \mathscr{D}_{n}^m $, $\mathcal{B} \in GDD^+_{m,n}$ if and only if its entries satisfy 
		\begin{equation}
			\label{eq:geo_mean_cone_}
			\prod_{k=1}^l b_{j_k j_k\dots j_k}^{\alpha_k}   \geq c| b_{i_1 \dots i_m} |^m,
		\end{equation}
		where $c = \prod_{k=1}^l {\binom{m-1}{\alpha - e_k} }^{\alpha_k}$, and $((j_1, \dots, j_l), \alpha = (\alpha_1,   \dots. \alpha_l))$ is the tight pair associated with $(i_1,\dots, i_m)$, and 
		\begin{equation}
			\label{eq:th_pf_nonnegative_diagonal}
			b_{pp\dots p} \geq 0, \quad  \forall \ (p,p,\dots, p) \in \mathscr{Q}_{i_1  \dots i_m}.
		\end{equation}  \label{thmgdd_characterization_item1}
		\item if $(i_1, \dots, i_m) \in \mathscr{F}_n^m$, $\mathcal{B} \in GDD^+_{m,n}$ if and only if $\mathcal{B}$ is a diagonal tensor satisfying $b_{i_1\dots i_m} \geq 0$.
		\label{thmgdd_characterization_item2}
	\end{enumerate}
\end{theorem}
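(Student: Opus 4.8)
The plan is to prove each direction of the two items by reducing the sparse GDD$^+$ membership question to a statement about a single low-dimensional polynomial, and then to an elementary inequality of arithmetic--geometric mean type. For item \ref{thmgdd_characterization_item2}, the argument is almost immediate: if $(i_1,\dots,i_m)\in\mathscr{F}_n^m$, then by the definition of $\mathbb{D}_{m,n}^{i_1\dots i_m}$ the only possible nonzero entry is $b_{i_1\dots i_m}=b_{ii\dots i}$ for the single index $i$ with $i_1=\dots=i_m=i$, since $\mathscr{P}_{i_1\dots i_m}\cup\mathscr{Q}_{i_1\dots i_m}=\{(i,\dots,i)\}$. So $\mathcal{B}$ is already a diagonal tensor, and a diagonal tensor is trivially a DD tensor hence a GDD tensor; it is in $GDD^+_{m,n}$ iff all its diagonal elements are nonnegative, which here means exactly $b_{ii\dots i}\ge 0$. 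I would write this out in two or three lines invoking Corollary~\ref{coro:gdd_h} and Definition~\ref{def:dd_sdd}.

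The substance is item \ref{thmgdd_characterization_item1}. Here I would use Theorem~\ref{theo:H_plus_SOS}/the $H^+$-to-PSD implication together with Corollary~\ref{coro:gdd_h}: since $\mathcal{B}$ is sparse and supported only on $\mathscr{P}_{i_1\dots i_m}\cup\mathscr{Q}_{i_1\dots i_m}$, the associated polynomial $\mathcal{B}x^m$ involves only the variables $x_{j_1},\dots,x_{j_l}$ (the entries of the tight index) and has the explicit form
\begin{equation*}
\mathcal{B}x^m \;=\; \sum_{k=1}^{l} b_{j_k\dots j_k}\,x_{j_k}^{m} \;+\; \binom{m}{\alpha_1,\dots,\alpha_l}\,b_{i_1\dots i_m}\,x_{j_1}^{\alpha_1}\cdots x_{j_l}^{\alpha_l},
\end{equation*}
where the multinomial coefficient counts the permutations in $\mathscr{P}_{i_1\dots i_m}$. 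Thus $\mathcal{B}$ being in $GDD^+_{m,n}$ should be shown equivalent to this $l$-variable polynomial being nonnegative on $\R^n$ together with $b_{j_k\dots j_k}\ge0$ for all $k$ (this gives \eqref{eq:th_pf_nonnegative_diagonal}). One direction is Theorem~\ref{theo:H_plus_SOS}. For the converse I would argue directly via Definition~\ref{def:dd_sdd}: I seek a positive diagonal scaling $D$ making $\mathcal{B}DD\cdots D$ diagonally dominant, and by Proposition~\ref{prop: alter_gdd_def} the DD condition on the scaled tensor is precisely
\begin{equation*}
b_{j_k\dots j_k}\,d_{j_k}^{m}\;\ge\;\binom{m-1}{\alpha-e_k}\,|b_{i_1\dots i_m}|\,d_{j_1}^{\alpha_1}\cdots d_{j_l}^{\alpha_l},\qquad k=1,\dots,l,
\end{equation*}
where $\binom{m-1}{\alpha-e_k}$ is the number of permutations in $\mathscr{P}_{i_1\dots i_m}$ whose first coordinate equals $j_k$. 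Raising the $k$-th such inequality to the power $\alpha_k/m$ and multiplying over $k$, the $d$'s cancel on the right, and I am left with exactly the inequality \eqref{eq:geo_mean_cone_} with $c=\prod_k\binom{m-1}{\alpha-e_k}^{\alpha_k}$ as stated; so \eqref{eq:geo_mean_cone_} is necessary. For sufficiency I would run this in reverse: assuming \eqref{eq:geo_mean_cone_} and $b_{j_k\dots j_k}\ge0$, I exhibit explicit $d_{j_k}$ — naturally $d_{j_k}=\big(\tfrac{1}{b_{j_k\dots j_k}}\big)^{1/m}$ up to a common normalization, with the degenerate cases $b_{i_1\dots i_m}=0$ or some $b_{j_k\dots j_k}=0$ handled separately (if $b_{i_1\dots i_m}=0$ the tensor is diagonal nonnegative; if some $b_{j_k\dots j_k}=0$ then \eqref{eq:geo_mean_cone_} forces $b_{i_1\dots i_m}=0$) — and check that these $d$'s satisfy all $l$ DD inequalities, which again is a weighted AM--GM manipulation.

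The technical heart, and the step I expect to be the main obstacle, is getting the combinatorial bookkeeping exactly right: correctly identifying that the coefficient of $x_{j_1}^{\alpha_1}\cdots x_{j_l}^{\alpha_l}$ in $\mathcal{B}x^m$ is the multinomial coefficient $\binom{m}{\alpha_1,\dots,\alpha_l}$ times $b_{i_1\dots i_m}$, that the off-diagonal row sum on slice $j_k$ of the scaled tensor equals $\binom{m-1}{\alpha-e_k}|b_{i_1\dots i_m}|d_{j_1}^{\alpha_1}\cdots d_{j_l}^{\alpha_l}$ (i.e., the number of $(i_2,\dots,i_m)$ with $\{j_k,i_2,\dots,i_m\}$ a permutation of $(i_1,\dots,i_m)$ is $\binom{m-1}{\alpha-e_k}$), and that the identity $\sum_{k=1}^l \alpha_k \cdot \tfrac{\alpha_k}{m}\cdot(\text{something})$ collapses so that the product of the $l$ scaled inequalities, each weighted by $\alpha_k/m$, returns \eqref{eq:geo_mean_cone_} with precisely the constant $c$ claimed. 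I would also need the elementary fact that for a univariate-after-substitution even/odd monomial $t\,x_{j_1}^{\alpha_1}\cdots x_{j_l}^{\alpha_l}+\sum_k s_k x_{j_k}^m$ with $s_k\ge0$, nonnegativity over $\R$ is equivalent to the weighted AM--GM inequality $\sum_k\tfrac{\alpha_k}{m}s_k \ge |t|$ with the understanding that $\sum\alpha_k=m$ — this is where the power-cone structure enters and connects to the stated form. Once the constants are pinned down, both implications are short; I would present the "only if" via the forced DD inequalities and the "if" via the explicit diagonal scaling, keeping the degenerate zero cases as a brief separate paragraph.
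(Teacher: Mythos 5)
Your operative argument coincides with the paper's: reduce membership in $GDD^+_{m,n}$ via Proposition~\ref{prop: alter_gdd_def} to the existence of positive scalars $d_{j_1},\dots,d_{j_l}$ satisfying the $l$ slice inequalities $b_{j_kj_k\dots j_k}d_{j_k}^{m}\ge\binom{m-1}{\alpha-e_k}|b_{i_1\dots i_m}|\,d_{i_1}\cdots d_{i_m}$, derive \eqref{eq:geo_mean_cone_} by taking weighted powers and multiplying (the $d$'s cancel through $\prod_k d_{j_k}^{\alpha_k}=d_{i_1}\cdots d_{i_m}$ and $\sum_k\alpha_k=m$), and prove sufficiency by exhibiting an explicit scaling, with the degenerate cases $b_{j_kj_k\dots j_k}=0$ or $b_{i_1\dots i_m}=0$ set aside; item \ref{thmgdd_characterization_item2} is handled identically. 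There is, however, a concrete gap in your sufficiency step: the scaling $d_{j_k}=b_{j_kj_k\dots j_k}^{-1/m}$, even up to a single common normalization, does not satisfy the slice inequalities in general, because the correct scaling must also carry the combinatorial weight $\binom{m-1}{\alpha-e_k}^{1/m}$, which varies with $k$. Take $m=4$, index $(1,1,1,2)$ with tight pair $((1,2),\alpha=(3,1))$, so the two weights are $3$ and $1$ and $c=27$; choose $b_{1111}=3$, $b_{2222}=1$, $b_{1112}=1$, which satisfies \eqref{eq:geo_mean_cone_} with equality. Your choice gives $d_1=z\,3^{-1/4}$, $d_2=z$, and the first slice inequality becomes $3z^4\cdot 3^{-1}\ge 3\cdot z^3\,3^{-3/4}\cdot z$, i.e.\ $z^4\ge 3^{1/4}z^4$, which fails. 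The paper's choice $d_{j_k}=z\bigl(\binom{m-1}{\alpha-e_k}/b_{j_kj_k\dots j_k}\bigr)^{1/m}$ is exactly what makes all $l$ inequalities collapse simultaneously to the single condition \eqref{eq:geo_mean_cone_}; with that correction your argument closes.

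A secondary point: the framing through Theorem~\ref{theo:H_plus_SOS} and nonnegativity of the $l$-variable polynomial $\mathcal{B}x^m$ should be dropped. Theorem~\ref{theo:H_plus_SOS} assumes $m$ even, whereas Theorem~\ref{theo:gdd_characterization} is stated for all $m$; moreover the reverse implication (nonnegativity of the polynomial plus nonnegative diagonal implies $GDD^+$) is not established anywhere in the paper and is not something your sketch actually proves. Since your final chain of equivalences runs entirely through the scaled diagonal-dominance inequalities, the polynomial detour is unnecessary, and deleting it removes the even-$m$ restriction it would silently impose.
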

\begin{proof} Let $(i_1,  \dots, i_m) \in \mathscr{D}_{n}^m $ be given. Denote $((j_1, \dots, j_l), \alpha = (\alpha_1,  \dots. \alpha_l))$ as the tight pair associated with $(i_1, \dots, i_m)$. Let $\mathcal{B} \in \mathbb{D}^{i_1 \dots i_m}_{m,n} $. Then, all the off-diagonal elements of $\mathcal{B}$ are zero except for the elements $b_{p_1 \dots p_m}$, where $ (p_1, \dots ,p_m) \in \mathscr{P}_{i_1\dots i_m} $. Then, using Proposition \ref{prop: alter_gdd_def}, it follows that $\mathcal{B} \in GDD_{m,n}^+$  if and only if its entries satisfy~\eqref{eq:th_pf_nonnegative_diagonal} and 
	\begin{equation}
		\label{eq:geo_mean_cone}
		b_{j_k j_k \dots j_k} d_{j_k}^m \geq \binom{m-1}{\alpha - e_k} | b_{i_1 \dots i_m} | d_{i_1} d_{i_2} \dots d_{i_m},
	\end{equation}
	for $k \in [l]$ and some $d_{j_k}>0$, for all $ k \in [l]$, after using \eqref{eq:prop1_prof3}, the sparsity pattern and symmetry of $\mathcal{B}$, and the fact that the number of equal summands in the right-hand side of \eqref{eq:prop1_prof3} in this case is $ \binom{m-1}{\alpha - e_k}$.
	
	Now note that if \eqref{eq:th_pf_nonnegative_diagonal} and \eqref{eq:geo_mean_cone} hold then \eqref{eq:th_pf_nonnegative_diagonal} and
	\begin{equation}
		\label{eq:geo_mean_cone1}
		b_{j_k j_k \dots j_k} ^{\alpha_k} d_{j_k}^{m \alpha_k } \geq \binom{m-1}{\alpha - e_k}^{\alpha_k}  |b_{i_1 \dots i_m}|^{\alpha_k} d_{i_1}^{\alpha_k} d_{i_2}^{\alpha_k} \dots d_{i_m}^{\alpha_k},
	\end{equation}
	hold for all $k \in [l]$, and some $d_{j_k} > 0$, for all $k \in [l]$; since~\eqref{eq:geo_mean_cone1} is obtained by taking the $\alpha_k$th power on both sides of \eqref{eq:geo_mean_cone}, whose (multiplicative) terms are all nonnegative. Given that both the left-hand side and the right-hand side of~\eqref{eq:geo_mean_cone1} are nonnegative, it follows, after multiplying the left-hand sides and the right-hand sides of~\eqref{eq:geo_mean_cone1} for all $k \in [l]$, and using the fact that $\|\alpha\|_1 = m$,  that \eqref{eq:th_pf_nonnegative_diagonal} and~\eqref{eq:geo_mean_cone1} imply \eqref{eq:th_pf_nonnegative_diagonal} and
	\begin{equation} 
		\label{eq:sdd_characterization_pf_1}
		\prod_{k=1}^l (	b_{j_k j_k \dots j_k} ^{\alpha_k} d_{j_k}^{m \alpha_k }) \geq  \left (\prod_{k=1}^l \binom{m-1}{\alpha - e_k}^{\alpha_k} \right ) |b_{i_1  \dots i_m}|^m  (d_{i_1} d_{i_2} \dots d_{i_m})^m,
	\end{equation}
	for some $d_{j_k}>0$, for all $k \in [l]$. In turn, \eqref{eq:sdd_characterization_pf_1} is equivalent to~\eqref{eq:geo_mean_cone_}, with $c :=  \prod_{k=1}^l \binom{m-1}{\alpha - e_k}^{\alpha_k}$, after noticing that from the definition of tight pair~\eqref{eq:tightpair}, it follows that 
	\begin{equation}
		\label{eq:tightpair2}
		\prod_{k=1}^l d_{j_k}^{\alpha_k } = d_{i_1} d_{i_2} \dots d_{i_m}.
	\end{equation}
	Now, to complete the proof, we show that~\eqref{eq:geo_mean_cone_} and~\eqref{eq:th_pf_nonnegative_diagonal} imply~\eqref{eq:geo_mean_cone} (i.e., that $\mathcal{B}$ is a $GDD^+_{m,n}$ tensor). First note that if for any $k \in [l]$, $b_{j_k j_k \dots j_k} = 0$, then~\eqref{eq:geo_mean_cone_} implies that $b_{i_1  \dots i_m}=0$. Thus, in this case, given \eqref{eq:th_pf_nonnegative_diagonal} and the fact that $d_{j_k} > 0$ for all $k \in [l]$, it follows that~\eqref{eq:geo_mean_cone} is satisfied for all $k \in [l]$. Moreover, in the case where $b_{i_1 \dots i_m}=0$, condition~\eqref{eq:geo_mean_cone} follows from~\eqref{eq:th_pf_nonnegative_diagonal}, given the fact that $d_{j_k} > 0$ for all $k \in [l]$. Thus, it is enough to consider the case in which $b_{j_k j_k \dots j_k} > 0$ for all $k \in [l]$, and $b_{i_1  \dots i_m} \not = 0$. In this case, using the fact that $d_{j_k} > 0$, we can write that 
	\begin{equation} 
		\label{eq:ddef}
		d_{j_k}= z \sqrt[m]{\frac{\binom{m-1}{\alpha - e_k}}{b_{j_k j_k \dots j_k} }},
	\end{equation}
	for some $z>0$, for all $k \in [l]$. Thus, for any $k \in [l]$, it follows that 
	\begin{equation} 
		\label{eq:sparse_tensor_pf}
		|b_{i_1 \dots i_m} | d_{i_1}   \dots d_{i_m} = z^m|b_{i_1  \dots i_m}|  \sqrt[m]{ \frac{c}{  \Pi_{k=1}^l b_{j_k j_k\dots j_k}^{\alpha_k} }} \leq z^m = \frac{b_{j_k j_k \dots j_k} d_{j_k}^{m }  }{  \binom{m-1}{\alpha - e_k}},
	\end{equation}
	where the first equality follows by using~\eqref{eq:tightpair2},~\eqref{eq:ddef}, and the definition of $c$; the inequality follows from~\eqref{eq:geo_mean_cone_}, and the last equality follows by using~\eqref{eq:ddef} again. After noticing that~\eqref{eq:sparse_tensor_pf} is equivalent to~\eqref{eq:geo_mean_cone}, it then follows that ~\eqref{eq:geo_mean_cone_} and~\eqref{eq:th_pf_nonnegative_diagonal} imply~\eqref{eq:geo_mean_cone}; that is, that $\mathcal{B} \in GDD^+_{n,m}$.
	
	If $(i_1, \dots, i_m) \in \mathscr{F}_n^m$ and tensor $\mathcal{B} = (b_{p_1  \dots p_m}) \in \mathbb{D}^{i_1   \dots i_m}_{m,n}  $, it follows from the definition of $\mathbb{D}^{i_1   \dots i_m}_{m,n}  $ (i.e.,  \eqref{eq:def_D_mn_i}) that $\mathcal{B}$ is a diagonal tensor in which the only nonzero entry is  $b_{i_1\dots i_m}$. Thus, $\mathcal{B} \in GDD^+_{m,n}$ tensor if and only if $\mathcal{B}$ is a diagonal tensor satisfying $b_{i_1\dots i_m} \geq 0$. 
\end{proof}

Next, in Corollary \ref{col:fullGDDplus}, we apply Theorem~\ref{theo:gdd_decomp} and Theorem~\ref{theo:gdd_characterization} to obtain sufficient and necessary conditions for a tensor $\mathcal{A} \in \mathbb{S}_{m,n}$ to be an $H^+$-tensor (or equivalently a GDD$^+$ tensor). Efforts to characterize $H$-tensors~\citep[see, e.g.,][]{huang2019iterative,li2014criterions,li2017programmable,liu2017iterative,wang2017new, zhang2016h,zhao2016criterions,sun2020new,huang2019some,liu2020iterative} have focused on establishing sufficient conditions for a tensor to qualify as such. While these existing algorithms can detect many $H^+$-tensors, some evade detection. Notably, \citep{luan2019new} employs spectral theory to derive a necessary and sufficient condition for strong $H$-tensors, offering an iterative method with linear convergence.  In contrast, building upon Corollary \ref{col:fullGDDplus}, our approach allows us to take advantage of
interior point methods for power cone optimization, ensuring polynomial time complexity and at least linear convergence~\citep{chares2009cones}. These conditions, derived from diagonal dominance properties, not only aid in identifying symmetric $H^+$-tensors but also enable direct optimization within this tensor class, highlighting the strengths of our method.

\begin{corollary}
	\label{col:fullGDDplus}
	Let $m, n \in \mathbb{N}$. Then $\mathcal{A} = (a_{p_1 p_2 \dots p_m}) \in \mathbb{S}_{m,n}$ is a GDD$^+$ tensor if and only if there exist $b^{\vec{i}}_{j} \ge 0$ for all $\vec{i} = (i_1,\dots,i_m) \in \mathscr{D}_n^m$,
	$j \in \ii$ satisfying
	\begin{enumerate}[label = (\roman*)]
		\item For $\vec{i} \in \mathscr{D}_n^m$, 
		\begin{equation}
			\label{eq:gdd_corollary2}
			\prod_{k=1}^{l^{\ii}} 
			(b^{{\vec{i}}}_{j_k})^{\alpha_k^{\ii}}  \geq c(\ii) | a_{\vec{i}} |^m 
		\end{equation}
		where  $c(\ii) = \prod_{k=1}^{l^{\vec{i}}} {\binom{m-1}{\alpha^{\vec{i}} - e_k} }^{\alpha^{\vec{i}}_k}$, and $((j^{\vec{i}}_1,j^{\vec{i}}_2, \dots, j^{\vec{i}}_{l^{\vec{i}}}),\alpha^{\vec{i}} =  (\alpha^{\vec{i}}_1, \alpha^{\vec{i}}_2, \dots. \alpha^{\vec{i}}_{l^{\vec{i}}}))$ is the tight pair associated with $\vec{i} $.
		\label{col:fullGDDplusitem1}
		\item For $j \in [n]$, 
		\begin{equation}
			\label{eq:gdd_corollary1}
			a_{jj\dots j} \geq  \sum_{\vec{i} \in  \mathscr{D}_{n}^m: j \in \vec{i}} b^{\vec{i}}_{j}.
		\end{equation}
	\end{enumerate}
\end{corollary}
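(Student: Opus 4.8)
The plan is to glue together the sparse decomposition of Theorem~\ref{theo:gdd_decomp} with the entrywise test for sparse GDD$^+$ tensors in Theorem~\ref{theo:gdd_characterization}, with essentially all of the work going into bookkeeping how the diagonal ($\mathscr{Q}$-)blocks of the sparse pieces overlap. The combinatorial fact used throughout is: for distinct sorted tuples $\ii,\ii'\in\mathscr{D}_n^m$ the permutation classes $\mathscr{P}_{\ii}$ and $\mathscr{P}_{\ii'}$ are disjoint and contain no constant tuple $(pp\dots p)$, whereas $(jj\dots j)\in\mathscr{Q}_{\ii}$ exactly when $j$ occurs in $\ii$. Hence, adding tensors supported on distinct patterns $\mathbb{D}^{\ii}_{m,n}$ leaves off-diagonal entries non-interacting, while the $(jj\dots j)$ diagonal entry accumulates precisely over those $\ii$ containing $j$. (When $n=1$ one has $\mathscr{D}_n^m=\emptyset$ and both sides of the claimed equivalence reduce to $a_{11\dots1}\ge0$, so assume $n\ge2$.)

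For the \emph{only if} direction, let $\mathcal{A}\in GDD_{m,n}^+$. By Theorem~\ref{theo:gdd_decomp}, $\mathcal{A}=\sum_{t=1}^{r}B_t$ with each $B_t\in\mathbb{D}_{m,n}\cap GDD_{m,n}^+$, so each $B_t$ lies in $\mathbb{D}^{\ii(t)}_{m,n}$ for some $\ii(t)\in\mathscr{D}_n^m$ (chosen arbitrarily among the admissible ones when $B_t$ is diagonal). Setting $\mathcal{B}^{\ii}:=\sum_{t:\,\ii(t)=\ii}B_t$, the fact that $\mathbb{D}^{\ii}_{m,n}$ is a linear subspace and $GDD_{m,n}^+$ a convex cone (Proposition~\ref{prop:convex_cone_GDD}) gives $\mathcal{B}^{\ii}\in\mathbb{D}^{\ii}_{m,n}\cap GDD_{m,n}^+$ and $\mathcal{A}=\sum_{\ii\in\mathscr{D}_n^m}\mathcal{B}^{\ii}$. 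Put $b^{\ii}_j:=(\mathcal{B}^{\ii})_{jj\dots j}$ for $j\in\ii$. Applying Theorem~\ref{theo:gdd_characterization}\ref{thmgdd_characterization_item1} to each $\mathcal{B}^{\ii}$, condition~\eqref{eq:th_pf_nonnegative_diagonal} gives $b^{\ii}_j\ge0$ and, since the disjointness above forces $(\mathcal{B}^{\ii})_{i_1\dots i_m}=a_{i_1\dots i_m}$, condition~\eqref{eq:geo_mean_cone_} gives exactly~\eqref{eq:gdd_corollary2}. Finally $(\mathcal{B}^{\ii})_{jj\dots j}=0$ whenever $j\notin\ii$, so summing the $(jj\dots j)$ entries yields $a_{jj\dots j}=\sum_{\ii\in\mathscr{D}_n^m:\,j\in\ii}b^{\ii}_j$, which implies~\eqref{eq:gdd_corollary1}.

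For the \emph{if} direction, given nonnegative scalars $b^{\ii}_j$ satisfying~\ref{col:fullGDDplusitem1} and~\eqref{eq:gdd_corollary1}, define for each $\ii\in\mathscr{D}_n^m$ the symmetric tensor $\mathcal{B}^{\ii}\in\mathbb{D}^{\ii}_{m,n}$ whose entries on $\mathscr{P}_{\ii}$ all equal $a_{i_1\dots i_m}$, whose diagonal entries are $(\mathcal{B}^{\ii})_{jj\dots j}=b^{\ii}_j$ for $j\in\ii$, and which is zero elsewhere. Conditions~\eqref{eq:gdd_corollary2} and $b^{\ii}_j\ge0$ are precisely~\eqref{eq:geo_mean_cone_} and~\eqref{eq:th_pf_nonnegative_diagonal} for $\mathcal{B}^{\ii}$, so Theorem~\ref{theo:gdd_characterization}\ref{thmgdd_characterization_item1} gives $\mathcal{B}^{\ii}\in GDD_{m,n}^+$. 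By symmetry of $\mathcal{A}$ and the disjointness above, $\sum_{\ii}\mathcal{B}^{\ii}$ agrees with $\mathcal{A}$ in every off-diagonal entry and has $(jj\dots j)$ entry $\sum_{\ii:\,j\in\ii}b^{\ii}_j\le a_{jj\dots j}$ by~\eqref{eq:gdd_corollary1}. Thus $\mathcal{C}:=\mathcal{A}-\sum_{\ii}\mathcal{B}^{\ii}$ is a diagonal tensor with nonnegative diagonal, hence a nonnegative combination of the tensors $\mathcal{V}^{0,jj\dots j}$, and therefore $\mathcal{C}\in DD_{m,n}^+\subseteq GDD_{m,n}^+$ by Proposition~\ref{prop:DDT_cone}. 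Then $\mathcal{A}=\mathcal{C}+\sum_{\ii\in\mathscr{D}_n^m}\mathcal{B}^{\ii}$ is a finite sum of GDD$^+$ tensors, so $\mathcal{A}\in GDD_{m,n}^+$ by Proposition~\ref{prop:convex_cone_GDD}.

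The only genuinely delicate point I anticipate is the handling of the diagonal blocks: each sparse summand carries not merely an off-diagonal permutation pattern but also the diagonal indices in $\mathscr{Q}_{\ii}$, and those blocks overlap across different $\ii$; the diagonal mass $a_{jj\dots j}$ therefore has to be split among all $\mathcal{B}^{\ii}$ with $j\in\ii$ (this is the sum in~\eqref{eq:gdd_corollary1}), with any remainder absorbed into the auxiliary nonnegative diagonal tensor $\mathcal{C}$ --- which is exactly why~\eqref{eq:gdd_corollary1} must be an inequality rather than an equality. Beyond this, the argument is a direct transcription through Theorems~\ref{theo:gdd_decomp} and~\ref{theo:gdd_characterization}.
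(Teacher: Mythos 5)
Your proof is correct and follows essentially the same route as the paper's: both directions are obtained by combining the sparse decomposition of Theorem~\ref{theo:gdd_decomp} with the entrywise test of Theorem~\ref{theo:gdd_characterization} and the cone property of Proposition~\ref{prop:convex_cone_GDD}. The only (cosmetic) difference is that the paper absorbs the leftover diagonal mass into summands indexed by $\mathscr{F}_n^m$ via Theorem~\ref{theo:gdd_characterization}\ref{thmgdd_characterization_item2}, whereas you collect it into a single auxiliary diagonal tensor $\mathcal{C}$; your explicit treatment of the grouping and of the overlap of the $\mathscr{Q}_{\ii}$-blocks is a careful transcription of the same argument.
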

\begin{proof}
	Let $m, n \in \mathbb{N}$. From Theorem~\ref{theo:gdd_decomp}, $\mathcal{A} = (a_{p_1 p_2 \dots p_m}) \in \mathbb{S}_{m,n}$ is a GDD$^+$ tensor if and only if 
	\begin{equation}
		\label{eq:geo_mean_cone_corollary_sum}
		\mathcal{A} = \sum_{\vec{i} \in \mathscr{D}_{n}^m\cup \mathscr{F}_{n}^m}\mathcal{B}^{{\vec{i}}}
	\end{equation} and for $\vec{i} \in \mathscr{D}_{n}^m\cup \mathscr{F}_{n}^m$, $\mathcal{B}^{{\vec{i}}} = (b^{{\vec{i}}}_{p_1 p_2 \dots p_m}) \in \mathbb{D}_{m,n}  \cap GDD_{m,n}^+$ satisfies conditions~\ref{thmgdd_characterization_item1} and~\ref{thmgdd_characterization_item2} in Theorem~\ref{theo:gdd_characterization}. Note that from the sparse structure of the tensors $\mathcal{B}^{\vec{i}}$ used in~\eqref{eq:geo_mean_cone_corollary_sum}, it follows that 
	for any $j \in [n]$,	
	\begin{equation}
		\label{eq:diagelements}
		a_{jj\dots j} = \sum_{\vec{i} \in \mathscr{D}_{n}^m: (j,j,\dots, j) \in \mathscr{Q}_{\vec{i}}} b^{\vec{i}}_{jj\dots j} + 
		b^{jj\dots j}_{jj\dots j},
	\end{equation}
	and 
	for any $\vec{i} \in \mathscr{D}_{n}^m$,
	\begin{equation}
		\label{eq:offdiagelements}
		a_{\vec{i}} = b^{\vec{i}}_{\vec{i}}.
	\end{equation}
	From Theorem~\ref{theo:gdd_characterization}\ref{thmgdd_characterization_item1} and~\eqref{eq:offdiagelements}, it follows that \[c(\ii)|a_{\vec{i}}|^m =c(\ii)|b^{\vec{i}}_{\vec{i}}|^m\leq \prod_{k=1}^{l^{\ii}} 
	(b^{{\vec{i}}}_{j_kj_k\dots j_k})^{\alpha_k^{\ii}}\]
	where  $c(\ii) = \prod_{k=1}^{l^{\vec{i}}} {\binom{m-1}{\alpha^{\vec{i}} - e_k} }^{\alpha^{\vec{i}}_k}$,  $((j^{\vec{i}}_1,j^{\vec{i}}_2, \dots, j^{\vec{i}}_{l^{\vec{i}}}),\alpha^{\vec{i}} =  (\alpha^{\vec{i}}_1, \alpha^{\vec{i}}_2, \dots. \alpha^{\vec{i}}_{l^{\vec{i}}}))$ is the tight pair associated with $\vec{i}$, and $b^{\ii}_{pp\dots p} \geq 0$, for all $(p,p,\dots, p) \in \mathscr{Q}_{\ii}$. The statement then follows from this and~\eqref{eq:diagelements}, after noticing that from 
	Theorem~\ref{theo:gdd_characterization}\ref{thmgdd_characterization_item2}, $b^{jj\dots j}_{jj\dots j} \ge 0$ for all $j \in [n]$, and after simplifying notation to let $b^{\vec{i}}_{jj\dots j} := b^{\vec{i}}_j$ for any $\vec{i} \in  \mathscr{D}_{n}^m: (jj\dots j) \in \mathscr{Q}_{\vec{i}}$; that is, for any $\vec{i} \in  \mathscr{D}_{n}^m: j \in \vec{i}$.
\end{proof}

Now we provide an example to illustrate the results in Theorem~\ref{theo:gdd_decomp} and 
Corollary~\ref{col:fullGDDplus}.
\begin{example}
	Consider the following symmetric tensor
	\[
	\mathcal{A} = (a_{i_i i_2 i_3 i_4}) = [ A(1,1,:,:), A(1,2,:,:); A(2,1,:,:), A(2,2,:,:) ] \in \mathbb{S}_{4,2},
	\]
	where 
	\begin{equation*}
		A(1,1,:,:) = \begin{pmatrix}
			4& -2 \\
			-2 & -1
		\end{pmatrix}, 
		A(1,2,:,:) =\begin{pmatrix}
			-2 & - 1\\
			-1 & 64/3
		\end{pmatrix},
	\end{equation*}
	\begin{equation*}
		A(2,1,:,:) = \begin{pmatrix}
			-2 & -1 \\
			-1 & 64/3
		\end{pmatrix},
		A(2,2,:,:) = \begin{pmatrix}
			-1 & 64/3 \\
			64/3 &1000
		\end{pmatrix}.
	\end{equation*}
	
	Denote $D_1 = \begin{pmatrix}
		1 & 0 \\
		0 & 2
	\end{pmatrix} $, $D_2 = \begin{pmatrix}
		1/2 & 0 \\
		0 & 2
	\end{pmatrix} $, $D_3 = \begin{pmatrix}
		1/3 & 0 \\
		0 & 4
	\end{pmatrix} $. Then, one can obtain \[
	\mathcal{A} = \frac{1037}{1296}\mathcal{V}^{0,1111} + 168\mathcal{V}^{0,2222} +  \mathcal{B}^{(1112)} +  \mathcal{B}^{(1122)}+  \mathcal{B}^{(1222)},
	\]
	where
	\[
	\mathcal{B}^{(1112)} = (b^{(1112)}_{j_1 j_2 j_3 j_4}) =  \mathcal{V}^{1,1112}D_1 D_1 D_1 D_1,
	\]
	\[
	\mathcal{B}^{(1122)}= (b^{(1122)}_{j_1 j_2 j_3 j_4})= \mathcal{V}^{1,1122}D_2 D_2 D_2 D_2,
	\]
	\[
	\mathcal{B}^{(1222)} = (b^{(1222)}_{j_1 j_2 j_3 j_4})= \mathcal{V}^{0,1222}D_3D_3 D_3 D_3.
	\]
	Let $b^{\vec{i}}_{j}$ = $b^{\vec{i}}_{jjjj}  \geq 0, j \in  {\vec{i}}$ for $\vec{i} \in \mathscr{D}_2^4$. Then it is easy to show that these $b^{\vec{i}}_{j}, j \in  {\vec{i}}, \vec{i} \in \mathscr{D}_2^4$ satisfy~\eqref{eq:gdd_corollary1} and~\eqref{eq:gdd_corollary2}. As a result, from Corollary \ref{col:fullGDDplus}, $\mathcal{A}$ is a symmetric $H^+$-tensor (GDD$^+$ tensor). In Section~\ref{sec:powercone}, we will show that Theorem \ref{th:check_gdd} allows us to obtain the matrices $D_1$, $D_2$ and $D_3$ by solving a power cone optimization problem~\citep{chares2009cones}.

	On the other hand, denote $D = \begin{pmatrix}
		3  & 0 \\
		0 & 1/2
	\end{pmatrix} $. Then \[
	\bar{\mathcal{A}} = \mathcal{A}DDDD =  [ \bar{A}(1,1,:,:), \bar{A}(1,2,:,:); \bar{A}(2,1,:,:), \bar{A}(2,2,:,:) ] ,
	\]
	where \begin{equation*}
		\bar{A}(1,1,:,:) = \begin{pmatrix}
			324 & -27 \\
			-27 & -9/4
		\end{pmatrix}, 
		\bar{A}(1,2,:,:) = \begin{pmatrix}
			-27 & -9/4 \\
			-9/4 & 8
		\end{pmatrix},
	\end{equation*}
	\begin{equation*}
		\bar{A}(2,1,:,:) = \begin{pmatrix}
			-27 & -9/4 \\
			-9/4 & 8
		\end{pmatrix},
		\bar{A}(2,2,:,:) = \begin{pmatrix}
			-9/4 & 8 \\
			8 &125/2
		\end{pmatrix},
	\end{equation*}
	is a DD$^+$ tensor. Thus, from Definition~\ref{def:dd_sdd}\ref{defitemii}, $\mathcal{A}$ is a symmetric $H^+$-tensor (GDD$^+$ tensor).
\end{example}

\subsection{Identifying symmetric $H^+$-tensors with power cone optimization}
\label{sec:powercone}

Corollary~\ref{col:fullGDDplus} readily implies that one can identify whether a 
symmetric tensor is an 
$H^+$-tensors  using tractable conic optimization techniques, and more precisely, the {\em power cone}~\citep[see, e.g.,][]{chares2009cones,hien2015differential}. To illustrate this, let us first introduce the {\em high-dimensional power cone}.

\begin{definition}[{High-dimensional power cone~\citep[][Sec. 4.1.2]{chares2009cones}}]
	\label{def:highpowercone}
	For any $\alpha \in \R^m_+$ such that $e\tr \alpha = 1$, the {\em  high-dimensional power cone} is defined by
	\begin{equation}
		\label{eq:highpowercone}
		\mathbb{K}^{(m)}_{\alpha} = \{(x, z) \in \R^m_+ \times \R: x_1^{\alpha_1} \cdots x_m^{\alpha_m} \ge |z| \}.
	\end{equation}
\end{definition}

Now, for any tensor $\mathcal{A} \in \mathbb{S}_{m,n}$, let

\begin{flalign}
	\label{eq:F_A}
	\mathbb{F}(\mathcal{A}) = \left \{ d^{\vec{i}}_{j} \in \R, \vec{i} \in \mathscr{D}_n^m, j \in \ii :
	\begin{array}{ll}
		a_{jj\dots j} \geq  \dsum_{\vec{i} \in  \mathcal{D}_{n}^m: j \in \ii} d^{\vec{i}}_{j}, &\forall j \in [n]\\
		( d^{\vec{i}}_{{i}_1}, \dots,  d^{\vec{i}}_{{i}_m}, c(\ii)^{\frac{1}{m}}a_{\ii}) \in \mathbb{K}^{(m)}_{\frac{1}{m}e}, & \forall  \vec{i} \in \mathscr{D}_n^m \\
	\end{array}  \right \}.
\end{flalign}

The next Corollary then follows from Definition~\ref{def:highpowercone} and Corollary~\ref{col:fullGDDplus}.

\begin{corollary}
	\label{cor:F(A)}
	Let $m, n \in \mathbb{N}$. Then $\mathcal{A} = (a_{p_1 p_2 \dots p_m}) \in \mathbb{S}_{m,n}$ is a GDD$^+$ tensor if and only if $\mathbb{F}(\mathcal{A}) \neq \emptyset$.
\end{corollary}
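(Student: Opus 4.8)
The plan is to derive Corollary~\ref{cor:F(A)} essentially by rewriting the conditions of Corollary~\ref{col:fullGDDplus} in the language of the high-dimensional power cone. The key observation is a direct dictionary: the nonnegative reals $b^{\vec{i}}_j$ in Corollary~\ref{col:fullGDDplus} play exactly the role of the $d^{\vec{i}}_j$ appearing in $\mathbb{F}(\mathcal{A})$, and condition~\eqref{eq:gdd_corollary2} is precisely a membership statement in $\mathbb{K}^{(m)}_{\frac{1}{m}e}$. So the proof is really just unpacking two definitions in both directions.

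First I would prove the ``only if'' direction. Suppose $\mathcal{A}$ is a GDD$^+$ tensor. By Corollary~\ref{col:fullGDDplus} there exist $b^{\vec{i}}_j \ge 0$ for all $\vec{i} \in \mathscr{D}_n^m$, $j \in \vec{i}$, satisfying~\eqref{eq:gdd_corollary1} and~\eqref{eq:gdd_corollary2}. Set $d^{\vec{i}}_j := b^{\vec{i}}_j$. The diagonal inequality in $\mathbb{F}(\mathcal{A})$ is then exactly~\eqref{eq:gdd_corollary1}. For the cone membership, fix $\vec{i} \in \mathscr{D}_n^m$ with tight pair $((j^{\vec{i}}_1,\dots,j^{\vec{i}}_{l^{\vec{i}}}), \alpha^{\vec{i}})$. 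Raising~\eqref{eq:gdd_corollary2} to the power $1/m$ and using $\|\alpha^{\vec{i}}\|_1 = m$ gives $\prod_{k=1}^{l^{\vec{i}}} (d^{\vec{i}}_{j_k})^{\alpha^{\vec{i}}_k/m} \ge c(\vec{i})^{1/m} |a_{\vec{i}}|$, which, after accounting for the multiplicities of each index $i_t$ in $(i_1,\dots,i_m)$ (the $\alpha^{\vec{i}}_k$ are exactly those multiplicities), is the same as $\prod_{t=1}^m (d^{\vec{i}}_{i_t})^{1/m} \ge |c(\vec{i})^{1/m} a_{\vec{i}}|$. Since each $d^{\vec{i}}_{i_t} \ge 0$, this says $(d^{\vec{i}}_{i_1},\dots,d^{\vec{i}}_{i_m}, c(\vec{i})^{1/m} a_{\vec{i}}) \in \mathbb{K}^{(m)}_{\frac{1}{m}e}$ by Definition~\ref{def:highpowercone}. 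Hence $\{d^{\vec{i}}_j\} \in \mathbb{F}(\mathcal{A})$, so $\mathbb{F}(\mathcal{A}) \ne \emptyset$.

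For the ``if'' direction, I reverse each step: given $\{d^{\vec{i}}_j\} \in \mathbb{F}(\mathcal{A})$, the cone membership forces $d^{\vec{i}}_{i_t} \ge 0$ for all $t$ (so $b^{\vec{i}}_j := d^{\vec{i}}_j \ge 0$ is well-defined) and gives $\prod_{t=1}^m (d^{\vec{i}}_{i_t})^{1/m} \ge |c(\vec{i})^{1/m}a_{\vec{i}}| = c(\vec{i})^{1/m}|a_{\vec{i}}|$; collecting equal indices and raising to the $m$th power recovers~\eqref{eq:gdd_corollary2}, while the diagonal inequality in $\mathbb{F}(\mathcal{A})$ is~\eqref{eq:gdd_corollary1}. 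Corollary~\ref{col:fullGDDplus} then yields that $\mathcal{A}$ is a GDD$^+$ tensor.

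The only genuinely non-routine point is the bookkeeping between the ``tight-pair'' indexing $(j^{\vec{i}}_1,\dots,j^{\vec{i}}_{l^{\vec{i}}})$ with powers $\alpha^{\vec{i}}$ used in Corollary~\ref{col:fullGDDplus} and the ``flat'' indexing $(i_1,\dots,i_m)$ with the symmetric exponent vector $\frac{1}{m}e$ used in $\mathbb{K}^{(m)}_{\frac{1}{m}e}$: one must check that $\prod_{k=1}^{l^{\vec{i}}}(d^{\vec{i}}_{j_k})^{\alpha^{\vec{i}}_k} = \prod_{t=1}^m d^{\vec{i}}_{i_t}$, which holds because, by the definition of the tight pair~\eqref{eq:tightpair}, each distinct value $j_k$ occurs exactly $\alpha^{\vec{i}}_k$ times among $i_1,\dots,i_m$ and $\sum_k \alpha^{\vec{i}}_k = m$. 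Everything else is a one-to-one translation of definitions, so I expect the proof to be short.
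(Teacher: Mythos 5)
Your proof is correct and is exactly the argument the paper intends: the paper simply asserts that the corollary ``follows from Definition~\ref{def:highpowercone} and Corollary~\ref{col:fullGDDplus},'' and your write-up supplies precisely that routine translation, including the one point worth checking, namely that $\prod_{k=1}^{l^{\vec{i}}}(d^{\vec{i}}_{j_k})^{\alpha^{\vec{i}}_k}=\prod_{t=1}^{m}d^{\vec{i}}_{i_t}$ via the tight pair.
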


Furthermore, the condition $\mathbb{F}(\mathcal{A}) \neq \emptyset$ in Corollary~\ref{cor:F(A)} can be checked in polynomial time using appropriate {\em interior point methods}~\citep[see, e.g.,][]{renegar2001mathematical}. To show this, we make use of the power cone, which is a lower-dimensional version of the high-dimensional power cone introduced in Definition~\ref{def:highpowercone}. Namely, 
for any $\alpha \in [0,1]$, the power cone $\mathbb{K}_{\alpha} := \mathbb{K}^2_{\alpha, 1-\alpha} = \{ (x,z) \in \R^2_+ \times \R : x_1^{\alpha} x_2^{1-\alpha} \ge |z| \}$~\citep[see, e.g.][]{koecher1957positivitatsbereiche, nesterov2012towards,roy2022self}. 
As shown in~\citep[][eq. (4.3), Sec. 4.1.2]{chares2009cones}, the higher-dimensional power cone $\mathbb{K}^{(m)}_{\alpha}$ can be decomposed into $m-1$ (low-dimensional) power cones. 

Using this fact, we can rewrite~\eqref{eq:F_A} as follows:
\begin{flalign}
	\label{eq:F_A_long}
	\mathbb{F}(\mathcal{A}) = &\left \{ 
	\begin{array}{l}
		d^{\vec{i}}_{j} \in \R, \vec{i} \in \mathscr{D}_n^m, j \in \ii\\
		v^{\vec{i}}_l \in \R_+, \vec{i} \in \mathscr{D}_n^m, l \in [m-2]\\
	\end{array}: \nonumber
	\right . \\ & \hspace{2cm} { \left . 
	\begin{array}{ll}
		a_{jj\dots j} \geq  \dsum_{\vec{i} \in  \mathscr{D}_{n}^m: j \in \ii} d^{\vec{i}}_{j}, &\forall j \in [n],\\
		( d^{\vec{i}}_{{i}_1}, v^{\ii}_1, c(\ii)^{\frac{1}{m}}a_{\ii}) \in \mathbb{K}_{\frac{1}{m}}, & \forall  \vec{i} \in \mathscr{D}_n^m \\
		( d^{\vec{i}}_{{i}_l}, v^{\ii}_l, v^{\ii}_{l-1}) \in \mathbb{K}_{\frac{1}{m-l+1}}, & \forall  \vec{i} \in \mathscr{D}_n^m, l=2, \dots, m-2 \\
		( d^{\vec{i}}_{{i}_{m-1}},  d^{\vec{i}}_{{i}_m}, v^{\ii}_{m-2}) \in {\mathbb{K}_{\frac{1}{2}}}, & \forall  \vec{i} \in \mathscr{D}_n^m \\
	\end{array}  \right \} }.
\end{flalign}

The relevance of introducing the power cone in~\eqref{eq:F_A_long} is that~\citep{chares2009cones,nesterov2012towards,roy2022self} provide  {\em self-concordant barriers} for the power cone. In short, this means that for any $\mathcal{A} \in \mathbb{S}_{m,n}$, the nonsymmetric conic feasibility system defined by~\eqref{eq:F_A_long}  can be solved in polynomial time using a {\em primal-dual predictor-corrector method}~\citep{wright1997primal}. The reference to nonsymmetry, stems from the fact that the power cone is not symmetric if $\alpha \neq \frac{1}{2}$~\citep{hien2015differential,tunccel2010self}. Open source software such as \texttt{SCS}~\citep{ocpb:16}, \texttt{Hypatia}~\citep{coey2022solving}, \texttt{DDS}~\citep{karimi2024domain}, \texttt{alfonso}~\citep{papp2022alfonso}, \texttt{Clarabel}~\citep{Clarabel_2024} and the commercial solver \texttt{MOSEK}~\citep{mosek2022} are powerful tools for solving power cone optimization problems. In particular, current solvers can handle power cones of size 2,500 in just 24 milliseconds~\citep[][Table~1]{chen2023efficient}. A GPU solver implementation, {\tt CuClarabel}, for power cone optimization problems has also recently emerged~\citep{chen2024cuclarabel}. Furthermore, the high-dimensional power cone can be represented by an {\em exponential cone} \citep{henrik2024}, which allows to leverage solvers for exponential cone optimization problems. This work leverages \texttt{SPOT}~\citep{SPOT} for formulating the power cone optimization problems, and \texttt{MOSEK 9.3.22}~\citep{mosek2022} is used to efficiently solve the resulting optimization tasks. 

\begin{theorem}
	\label{th:check_gdd}
	For $m, n \in \mathbb{N}$, to check if a tensor in $\mathbb{S}_{m,n}$ is an $H^+$-tensor (GDD$^+$ tensor) is equivalent to solve a power cone optimization problem of size polynomial in $n$ for a fixed $m$.
\end{theorem}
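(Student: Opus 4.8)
The plan is to derive Theorem~\ref{th:check_gdd} as a direct consequence of the characterization already assembled in Corollary~\ref{cor:F(A)} together with the explicit power-cone reformulation~\eqref{eq:F_A_long}. By Corollary~\ref{cor:F(A)}, a tensor $\mathcal{A} \in \mathbb{S}_{m,n}$ is a GDD$^+$ tensor (equivalently, an $H^+$-tensor, by Corollary~\ref{coro:gdd_h}) if and only if the feasible region $\mathbb{F}(\mathcal{A})$ is nonempty. So checking $H^+$-membership reduces to testing feasibility of the conic system~\eqref{eq:F_A_long}. The task is therefore to argue that (i)~this system is a genuine power cone optimization problem (e.g.\ feasibility can be posed as a power cone program with a trivial or slack objective), and (ii)~its size is polynomial in $n$ for fixed $m$.

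First I would argue point~(ii), the size bound. The decision variables are the $d^{\vec{i}}_j$ indexed by $\vec{i} \in \mathscr{D}_n^m$ and $j \in \vec{i}$, plus the auxiliary slacks $v^{\vec{i}}_l$ for $\vec{i} \in \mathscr{D}_n^m$, $l \in [m-2]$. Since $\card(\mathscr{D}_n^m) \le \binom{n+m-1}{m} = O(n^m)$ and each $\vec{i}$ contributes at most $m$ variables $d^{\vec{i}}_j$ and $m-2$ variables $v^{\vec{i}}_l$, the total number of variables is $O(m\, n^m)$, which is polynomial in $n$ for fixed $m$. Similarly, there are $n$ linear inequality constraints (one per diagonal entry), and for each $\vec{i} \in \mathscr{D}_n^m$ exactly $m-1$ power cone constraints $\mathbb{K}_{\alpha}$ of dimension $3$ (as displayed in the last three lines of~\eqref{eq:F_A_long}); hence $O(m\, n^m)$ cone constraints total, again polynomial in $n$. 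I would also note that the data of the system (the coefficients $a_{jj\dots j}$, the exponents $\tfrac{1}{m-l+1}$, and the constants $c(\vec{i})^{1/m}$ where $c(\vec{i}) = \prod_{k=1}^{l^{\vec{i}}} \binom{m-1}{\alpha^{\vec{i}} - e_k}^{\alpha^{\vec{i}}_k}$) has bit-size polynomial in the input size, so the reduction is genuinely a polynomial-time many-one reduction.

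For point~(i), I would recall that~\eqref{eq:F_A_long} was obtained from~\eqref{eq:F_A} by the standard decomposition of the high-dimensional power cone $\mathbb{K}^{(m)}_{\frac{1}{m}e}$ into $m-1$ three-dimensional power cones $\mathbb{K}_\alpha = \mathbb{K}^2_{\alpha,1-\alpha}$, as given in~\cite[eq.~(4.3), Sec.~4.1.2]{chares2009cones}; the equivalence $\mathbb{F}(\mathcal{A}) \ne \emptyset \iff \mathcal{A} \in GDD^+_{m,n}$ is then exactly Corollary~\ref{cor:F(A)} combined with this reformulation. To phrase it as an \emph{optimization} problem, one can minimize (say) a single auxiliary variable $t$ subject to the constraints of~\eqref{eq:F_A_long} with each linear inequality $a_{jj\dots j} \ge \sum_{\vec{i}} d^{\vec{i}}_j$ relaxed to $a_{jj\dots j} + t \ge \sum_{\vec{i}} d^{\vec{i}}_j$, so that the system is always feasible and the optimal value is $\le 0$ precisely when $\mathbb{F}(\mathcal{A}) \ne \emptyset$; this is a power cone program with a linear objective. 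Finally, as already observed in the paragraph preceding the theorem, the power cone admits explicit self-concordant barriers~\cite{chares2009cones,nesterov2012towards,roy2018self}, so such a program is solved to any fixed accuracy in time polynomial in the problem size by a primal-dual predictor-corrector interior point method~\citep{wright1997primal,renegar2001mathematical}; combined with the polynomial size bound from point~(ii), this yields the claimed polynomial-time check.

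I do not expect a serious obstacle here, since the theorem is essentially a bookkeeping corollary of the machinery already in place. The one point requiring a little care is making the size count rigorous: I would be explicit that ``polynomial in $n$ for fixed $m$'' is the right statement (the $\binom{n+m-1}{m}$ bound is polynomial in $n$ only when $m$ is held constant) and that the feasibility-versus-optimization distinction is handled by the slack-variable trick above, so that the reduction lands in the class of problems for which interior point methods give the stated polynomial-time guarantee.
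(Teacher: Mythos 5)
Your proposal is correct and follows essentially the same route as the paper, whose proof is a one-line appeal to Corollary~\ref{cor:F(A)}, equation~\eqref{eq:F_A_long}, and the bound $|\mathscr{D}_{n}^m| = \binom{n+m-1}{m} - n$. You simply make explicit the variable/constraint counts and the feasibility-to-optimization conversion that the paper leaves implicit.
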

\begin{proof}
	The result follows from Corollary~\ref{cor:F(A)}, equation~\eqref{eq:F_A_long}, and the fact that $|\mathscr{D}_{n}^m | = \binom{n+m-1}{m} - n$.
\end{proof}

For a detailed discussion of the properties of, and optimization over the power cone, we direct the reader to~\citep{mosek,chares2009cones}.

As mentioned earlier, even order symmetric $H^+$-tensors are PSD tensors~\citep{chen2015sos}. As we will demonstrate below, this property enables the introduction of a novel class of nonnegative polynomials, which can be used to address the solution of polynomial optimization (PO) problems; that is, problems whose objective and constraints can be defined by polynomials. PO is an area that takes advantage of algebraic geometric results to construct hierarchies of convex optimization problems that provide increasingly tight approximations of the PO problem. The most common approach is to draw on properties of sums of squares (SOS) polynomials to construct the desired hierarchies using semidefinite optimization (SDO), as checking if a polynomial is SOS is equivalent to solving a SDO~\citep[see, e.g.,][]{lasserre2015introduction}. However, solving the associated SDO problems is in general prohibitively expensive in terms of computational effort. As a result, a direction of research in PO now focuses on using new classes of nonnegative polynomials that might lead to hierarchies that are constructed using other optimization techniques such as linear optimization or second-order cone optimization~\citep[see, e.g.,][]{kuryatnikova2024reducing, ahmadi2019dsos}. As shown below, even order symmetric $H^+$-tensors (GDD$^+$ tensor) provide a way to create such a class of nonnegative polynomials. For that purpose, we begin by defining the set of polynomials derived from symmetric $H^+$-tensors (GDD$^+$ tensor).

\begin{definition}
	\label{def:sddtpsd}
	A polynomial $p(x)  \in \R[x]$ with degree $m$ and $n$ variables is called GDDTSOS if there is a tensor $\mathcal{A} = (a_{p_1 p_2 \dots p_m}) \in GDD^+_{m,n+1}$ such that 
	$p(x) = \langle A, x\otimes \dots \otimes x \rangle$, where $x = (1, x_1,x_2,\dots, x_n)^T$.
\end{definition}


\begin{example}[Application in polynomial optimization]
	\label{example:pop}
	For $m, n \in \mathbb{N}$, let 
	\[
	\mathcal{K}_{2m,n} = \{ p(x) \in \mathbb{R}[x]: p(x) \text{ is GDDTSOS with degree $2m$ and $n$ variables} \}.
	\]
	Then  $\mathcal{K}_{2m,n} \supset \mathbb{R}_+ $ and $\mathcal{K}_{2m,n}$ is contained in the set of nonnegative polynomials. To see that $\mathcal{K}_{2m,n} \supset \mathbb{R}_+ $ , notice that for $m, n \in \mathbb{N}$ and any $c \in \mathbb{R}_+ $, if we let $\mathcal{A} = (a_{i_1 i_2, \cdots, i_{2m}}) \in  \mathbb{S}_{2m,n+1}$ be the tensor with $a_{11,\cdots,1} = c$ and all the other entries are 0. Then $c = \langle \mathcal{A}, x\otimes \dots \otimes x \rangle$, where $x = (1, x_1,x_2,\dots, x_n)^T$. Clearly,  $\mathcal{A}  \in GDD^+_{2m,n+1} $ and thus $\mathcal{K}_{2m,n} \supset \mathbb{R}_+$. From~\citep{chen2015sos}, even order symmetric $H^+$-tensors are PSD tensors. From~\citep{kannan2015some}, $GDD_{2m, n+1}^+$ is equivalent to the set of symmetric $H^+$-tensors with order $2m$ and dimension $n+1$. Thus, tensors in $GDD_{2m, n+1}^+$ are also PSD tensors and $\mathcal{K}_{2m,n}$ is contained in the set of nonnegative polynomials for $m, n \in \mathbb{N}$.
From Proposition~3.5 and Remark~2 in~\citep{kuryatnikova2024reducing}, it follows that $\mathcal{K}$ satisfies the properties required to construct hierarchies of convex optimization problems that can be solved using power cone optimization, rather than semidefinite optimization,  to approximate any polynomial optimization problem with compact feasible set. This type of approximation approach can be used to address problems in statistics and machine learning, derivative pricing, and control theory~\citep{ahmadi2019dsos}.
\end{example}

%
%


\section{Minimum $H$-eigenvalue of $M$-tensors}
\label{sec:Meigenvalue}

The problem of obtaining bounds on the minimum $H$-eigenvalue of $M$-matrices and $M$-tensors has received significant attention in the literature~\citep{he2014inequalities,huang2018some,li2013new, tian2010inequalities}. This is due to the important role the $M$-tensors play in a wide range of interesting applications~\citep[see,][and the references therein]{huang2018some}. For example, $M$-tensors are used to encode systems of multilinear equations arising in the  numerical solution of partial differential equations, as well as data mining and tensor complementarity problems~\citep{han2017homotopy}.
However, these bounds are loose~\citep[see, e.g.,][Table~1]{huang2018some}, and 
even expensive to compute~\citep[see, e.g.,][Table~2]{huang2018some}. 
Further, the minimum $H$-eigenvalue of $M$-matrices can be computed with {\em homotopy continuation} type algorithms that allow the more general computation of complex generalized tensor eigenpairs~\citep{chen2016computing}. However, these algorithms are not guaranteed to work in polynomial time. 
Next, we show that the characterization in Corollary~\ref{cor:F(A)} can be applied to obtain the exact minimum $H$-eigenvalue of symmetric $M$-tensors in polynomial time by solving a power cone optimization problem. Besides, this result can also be used to obtain lower bounds for the minimum $H$-eigenvalue of {\em general} (i.e., not necessarily symmetric) $M$-tensors in polynomial time by solving a power cone optimization problem. For that purpose, we first introduce the following results.

\begin{lemma}[{\citep[][Lem.~2.2]{zhang2014m}}] 
	\label{lemma:linear_eigenvalue_tensor}
	For $m,n \in \mathbb{N}$, let $\mathcal{A} \in \mathbb{T}_{m,n}$. Suppose that $\mathcal{B} = a(\mathcal{A} + b \mathcal{I})$, where $a$ and $b$ are two real numbers. Then $\mu$ is an eigenvalue ($H$-eigenvalue) of $\mathcal{B}$ if and only if $\mu = a (\lambda +b)$ and $\lambda$ is an eigenvalue ($H$-eigenvalue) of $\mathcal{A}$.
\end{lemma}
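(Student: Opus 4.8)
The plan is to argue directly from the definition of (H-)eigenvalues together with the elementary identity $\mathcal{I}x^{m-1} = x^{[m-1]}$, valid for every $x \in \mathbb{C}^n$ since $\mathcal{I}$ is the diagonal tensor with unit diagonal entries. First I would record that, by linearity of the map $\mathcal{C} \mapsto \mathcal{C}x^{m-1}$ in the tensor argument, for all $x \in \mathbb{C}^n$ we have $\mathcal{B}x^{m-1} = a\,\mathcal{A}x^{m-1} + ab\,x^{[m-1]}$. I would also note that we may assume $a \neq 0$; if $a=0$ then $\mathcal{B}$ is the zero tensor and the statement is degenerate.

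For the \emph{only if} direction I would take an eigenvalue $\lambda$ of $\mathcal{A}$ with eigenvector $x \in \mathbb{C}^n\setminus\{0\}$, so $\mathcal{A}x^{m-1} = \lambda x^{[m-1]}$, and substitute into the identity above to get $\mathcal{B}x^{m-1} = a(\lambda+b)\,x^{[m-1]}$; hence $\mu := a(\lambda+b)$ is an eigenvalue of $\mathcal{B}$ with the same eigenvector $x$. If in addition $x$ is real, then $x$ remains a real eigenvector, so $\mu$ is an $H$-eigenvalue of $\mathcal{B}$ whenever $\lambda$ is an $H$-eigenvalue of $\mathcal{A}$.

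For the \emph{if} direction I would start from $\mathcal{B}x^{m-1} = \mu x^{[m-1]}$ with $x \neq 0$, combine with the identity for $\mathcal{B}x^{m-1}$, and divide by $a \neq 0$ to obtain $\mathcal{A}x^{m-1} = (\mu/a - b)\,x^{[m-1]}$; thus $\lambda := \mu/a - b$ is an eigenvalue of $\mathcal{A}$ with eigenvector $x$, and $\mu = a(\lambda+b)$ by construction. The same computation, carried out with a real eigenvector, shows $\lambda$ is an $H$-eigenvalue of $\mathcal{A}$ whenever $\mu$ is an $H$-eigenvalue of $\mathcal{B}$.

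I do not expect any real obstacle: the argument is a one-line substitution in each direction. The only points requiring care are that $x \neq 0$ implies $x^{[m-1]} \neq 0$ (some coordinate $x_i\neq 0$ gives $(x^{[m-1]})_i = x_i^{m-1}\neq 0$), so that the eigen-relation determines $\lambda$ (resp.\ $\mu$) uniquely, and the standing assumption $a\neq 0$ needed for the division in the converse direction.
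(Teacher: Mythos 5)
Your proof is correct. The paper does not actually prove this lemma---it is imported verbatim as Lemma~2.2 of the cited reference \cite{zhang2014m}---so there is no in-paper argument to compare against; your direct verification from the definition, using $\mathcal{I}x^{m-1}=x^{[m-1]}$ and linearity of $\mathcal{C}\mapsto\mathcal{C}x^{m-1}$, is exactly the standard one-line substitution one would expect, and you are right to flag both that $x\neq 0$ forces $x^{[m-1]}\neq 0$ and that the converse direction genuinely needs $a\neq 0$ (the case $a=0$ being degenerate as stated).
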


The following two results, Lemma~\ref{lemma:H-eigenvalue_Z-tensor} and Proposition~\ref{prop:M-tensor_eigenvalue}, can be derived from~{\citep[][Thm.~3.9, Cor.~3.10, Thm.~3.11]{zhang2014m}}. However, we provide brief proofs of these results for clarity.
\begin{lemma}
	\label{lemma:H-eigenvalue_Z-tensor}
	For $m,n \in \mathbb{N}$, if $\mathcal{A} = s\mathcal{I} - \mathcal{D}\in \mathbb{S}_{m,n}$ where $\mathcal{D}$ is a nonnegative tensor and $s$ is a scalar, then $s - \rho(\mathcal{D})$ is the minimum $H$-eigenvalue of $\mathcal{A}$.
\end{lemma}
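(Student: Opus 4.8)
The plan is to combine Lemma~\ref{lemma:linear_eigenvalue_tensor} with the basic relationship between $\rho(\mathcal{D})$ and the $H$-eigenvalues of a symmetric nonnegative tensor. First I would invoke Theorem~\ref{lemma:nonnegative_tensor_pho}: since $\mathcal{D} \in \mathbb{S}_{m,n}$ is a nonnegative tensor, $\rho(\mathcal{D})$ is itself an $H$-eigenvalue of $\mathcal{D}$, and moreover (by definition of $\rho$) it is the largest one in modulus; since all entries are nonnegative, $\rho(\mathcal{D}) \ge 0$ and it is in fact the largest $H$-eigenvalue, i.e. $\rho(\mathcal{D}) = \lambda_{\text max}(\mathcal{D})$ (this is exactly equation~\eqref{eq:rho_inequality_pf} established in the proof of Proposition~\ref{prop:nonnegative_tensor_sum}).

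Next I would apply Lemma~\ref{lemma:linear_eigenvalue_tensor} with the choice $a = -1$ and $b = -s$, so that $\mathcal{A} = s\mathcal{I} - \mathcal{D} = -1\cdot(\mathcal{D} + (-s)\mathcal{I}) = a(\mathcal{D} + b\mathcal{I})$. The lemma then says that $\mu$ is an $H$-eigenvalue of $\mathcal{A}$ if and only if $\mu = -(\lambda - s) = s - \lambda$ for some $H$-eigenvalue $\lambda$ of $\mathcal{D}$. Consequently the set of $H$-eigenvalues of $\mathcal{A}$ is exactly $\{ s - \lambda : \lambda \text{ is an } H\text{-eigenvalue of } \mathcal{D}\}$, and the minimum element of this set is $s - \lambda_{\text max}(\mathcal{D})$.

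Finally I would combine the two observations: the minimum $H$-eigenvalue of $\mathcal{A}$ equals $s - \lambda_{\text max}(\mathcal{D}) = s - \rho(\mathcal{D})$, which is the claimed identity. One should note a small subtlety to address: Lemma~\ref{lemma:linear_eigenvalue_tensor} is stated for $\mathcal{A} \in \mathbb{T}_{m,n}$ and gives a bijection at the level of $H$-eigenvalues, so it is legitimate to transport the minimum, but one must be careful that $\mathcal{A}$ does in fact possess $H$-eigenvalues (it does, since $\mathcal{D}$ has the $H$-eigenvalue $\rho(\mathcal{D})$ by Theorem~\ref{lemma:nonnegative_tensor_pho}, which maps to $s - \rho(\mathcal{D})$). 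The only mild obstacle is making precise that ``$\rho(\mathcal{D})$ is the largest $H$-eigenvalue'' — i.e. that no other $H$-eigenvalue of the nonnegative symmetric tensor $\mathcal{D}$ exceeds $\rho(\mathcal{D})$ — but this is immediate from the definition of $\rho$ as the largest modulus of any eigenvalue, together with $\rho(\mathcal{D}) \ge 0$; alternatively it follows directly from~\eqref{eq:rho_inequality_pf}. No other step requires more than a line.
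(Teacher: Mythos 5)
Your proof is correct and follows essentially the same route as the paper's: both arguments rest on Theorem~\ref{lemma:nonnegative_tensor_pho} to see that the value $s-\rho(\mathcal{D})$ is actually attained as an $H$-eigenvalue of $\mathcal{A}$, and on Lemma~\ref{lemma:linear_eigenvalue_tensor} to transport $H$-eigenvalues between $\mathcal{D}$ and $\mathcal{A}=s\mathcal{I}-\mathcal{D}$. The only cosmetic difference is that you certify minimality through the identity $\rho(\mathcal{D})=\lambda_{\text{max}}(\mathcal{D})$ from~\eqref{eq:rho_inequality_pf}, whereas the paper bounds $\rho(\mathcal{D})\geq |s-\lambda|\geq s-\lambda$ directly from the definition of the spectral radius; both reductions are valid.
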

\begin{proof}
	First, from Theorem \ref{lemma:nonnegative_tensor_pho} it follows that $\rho(\mathcal{D})$ is an $H$-eigenvalue of $\mathcal{D}$. Then, from Lemma \ref{lemma:linear_eigenvalue_tensor}, $s - \rho(\mathcal{D})$ is an $H$-eigenvalue of $\mathcal{A}$. Assume that $\lambda$ is an $H$-eigenvalue of $\mathcal{A}$. Then, $s - \lambda$ is an $H$-eigenvalue of $\mathcal{D}$. Thus, $\rho(\mathcal{D}) \geq |s - \lambda| \geq s - \lambda $. That is, $\lambda \geq s - \rho(\mathcal{D})$. Thus, $s - \rho(\mathcal{D})$ is the minimum $H$-eigenvalue of $\mathcal{A}$.
\end{proof}

In what follows, for any $\mathcal{A}  \in \mathbb{S}_{m,n}$, let $\lambda_{\min}(\mathcal{A})$ denote the minimum $H$-eigenvalue of $\mathcal{A}$.

\begin{proposition}
	\label{prop:M-tensor_eigenvalue}
	For $m,n \in \mathbb{N}$, if $\mathcal{A} \in \mathbb{S}_{m,n}$ is a $Z$-tensor, then for any $\lambda \leq \lambda_{\min}(\mathcal{A})$, $\mathcal{A} - \lambda I$ is an $M$-tensor. Besides, for any $\lambda > \lambda_{\min}(\mathcal{A})$, $\mathcal{A} - \lambda I$ is not an $M$-tensor.
\end{proposition}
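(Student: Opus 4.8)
The plan is to handle the two assertions separately, using the $M$-tensor structure and Lemma~\ref{lemma:H-eigenvalue_Z-tensor} as the main tools. Since $\mathcal{A} \in \mathbb{S}_{m,n}$ is an $M$-tensor, by definition there exist a nonnegative scalar $s$ and a nonnegative tensor $\mathcal{D}$ such that $\mathcal{A} = s\mathcal{I} - \mathcal{D}$ with $s \geq \rho(\mathcal{D})$. The key observation is that for any $\lambda \in \R$, we can write $\mathcal{A} - \lambda\mathcal{I} = (s - \lambda)\mathcal{I} - \mathcal{D}$, which keeps the same nonnegative ``off-diagonal part'' tensor $\mathcal{D}$ and only shifts the scalar multiple of $\mathcal{I}$. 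Moreover, from Lemma~\ref{lemma:H-eigenvalue_Z-tensor}, the minimum $H$-eigenvalue of $\mathcal{A}$ is $\lambda_{\min}(\mathcal{A}) = s - \rho(\mathcal{D})$.

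For the first assertion, fix $\lambda \leq \lambda_{\min}(\mathcal{A}) = s - \rho(\mathcal{D})$. I would first note that $\mathcal{A} - \lambda\mathcal{I} = (s-\lambda)\mathcal{I} - \mathcal{D}$ is a $Z$-tensor: indeed $s - \lambda \geq \rho(\mathcal{D}) \geq 0$ (using that $\rho(\mathcal{D}) \geq 0$ for a nonnegative tensor), and $\mathcal{D}$ is nonnegative, so this is a valid $Z$-tensor representation. To conclude it is an $M$-tensor, I need $s - \lambda \geq \rho(\mathcal{D})$, which is exactly the hypothesis $\lambda \leq s - \rho(\mathcal{D})$. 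Hence $\mathcal{A} - \lambda\mathcal{I}$ is an $M$-tensor. (One should be slightly careful that the symmetry of $\mathcal{A}$ is inherited by $\mathcal{A} - \lambda\mathcal{I}$, which is immediate since $\mathcal{I}$ is diagonal.)

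For the second assertion, fix $\lambda > \lambda_{\min}(\mathcal{A})$ and suppose for contradiction that $\mathcal{A} - \lambda\mathcal{I}$ is an $M$-tensor. By Lemma~\ref{lemma:H-eigenvalue_Z-tensor} applied to the representation $\mathcal{A} - \lambda\mathcal{I} = (s-\lambda)\mathcal{I} - \mathcal{D}$ (valid since, being an $M$-tensor, $\mathcal{A}-\lambda\mathcal{I}$ has nonnegative diagonal so $s - \lambda \geq 0$, hence it is a $Z$-tensor with nonnegative part $\mathcal{D}$), the minimum $H$-eigenvalue of $\mathcal{A} - \lambda\mathcal{I}$ equals $(s-\lambda) - \rho(\mathcal{D}) = \lambda_{\min}(\mathcal{A}) - \lambda < 0$. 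But an $M$-tensor is a $Z$-tensor of the form $t\mathcal{I} - \mathcal{E}$ with $t \geq \rho(\mathcal{E})$, and by Lemma~\ref{lemma:H-eigenvalue_Z-tensor} its minimum $H$-eigenvalue is $t - \rho(\mathcal{E}) \geq 0$, a contradiction. Alternatively, and perhaps more cleanly, I would invoke Lemma~\ref{lemma:linear_eigenvalue_tensor}: the $H$-eigenvalues of $\mathcal{A} - \lambda\mathcal{I}$ are exactly $\{\mu - \lambda : \mu \text{ an } H\text{-eigenvalue of } \mathcal{A}\}$, so $\lambda_{\min}(\mathcal{A} - \lambda\mathcal{I}) = \lambda_{\min}(\mathcal{A}) - \lambda < 0$, and then use that every $M$-tensor has nonnegative minimum $H$-eigenvalue (again via Lemma~\ref{lemma:H-eigenvalue_Z-tensor}).

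\textbf{Main obstacle.} The only genuinely delicate point is justifying that $\mathcal{A} - \lambda\mathcal{I}$, written as $(s-\lambda)\mathcal{I} - \mathcal{D}$, is a legitimate $Z$-tensor when $\lambda \leq \lambda_{\min}(\mathcal{A})$ — this needs $s - \lambda \geq 0$, which follows from $s - \lambda \geq \rho(\mathcal{D}) \geq 0$, so it is not a real obstacle. Everything else is bookkeeping with the definitions of $Z$-tensor and $M$-tensor together with Lemmas~\ref{lemma:linear_eigenvalue_tensor} and~\ref{lemma:H-eigenvalue_Z-tensor}; no new spectral-theory input is required.
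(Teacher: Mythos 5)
Your proof is correct and follows essentially the same route as the paper: both parts rest on the representation $\mathcal{A}-\lambda\mathcal{I}=(s-\lambda)\mathcal{I}-\mathcal{D}$ together with Lemma~\ref{lemma:H-eigenvalue_Z-tensor}, the only cosmetic difference being that in the contradiction step you evaluate $\lambda_{\min}(\mathcal{A}-\lambda\mathcal{I})$ in two ways while the paper evaluates $\lambda_{\min}(\mathcal{A})$ in two ways. (Your parenthetical justification that $s-\lambda\ge 0$ is unnecessary, since Lemma~\ref{lemma:H-eigenvalue_Z-tensor} only requires the coefficient of $\mathcal{I}$ to be a scalar.)
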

\begin{proof}
	Since $\mathcal{A} \in \mathbb{S}_{m,n}$ is a $Z$-tensor, then there exist a nonnegative tensor $\mathcal{D}$ and nonnegative scalar $s$ such that $\mathcal{A} = sI - \mathcal{D}$. Then, for any $\lambda \leq   \lambda_{\min}(\mathcal{A})$,
	\[
	\mathcal{A} - \lambda I = (s - \lambda)I - \mathcal{D}.
	\]
	From Lemma \ref{lemma:H-eigenvalue_Z-tensor}, $\lambda_{\min}(\mathcal{A}) = s - \rho(\mathcal{D})$. Thus for any $\lambda \leq   \lambda_{\min}(\mathcal{A})$, $s - \lambda -  \rho(\mathcal{D}) \geq s - \lambda_{\min}(\mathcal{A}) -  \rho(\mathcal{D}) =  0$. Furthermore, $s - \lambda \geq \rho(\mathcal{D})  \geq 0$. As a result, $\mathcal{A} - \lambda I$ is an $M$-tensor. Now, for some $\lambda > \lambda_{\min}(\mathcal{A})  $, assume $\mathcal{A} - \lambda I$ is an $M$-tensor. Then there exist a nonnegative tensor $\tilde{\mathcal{D}}$ and nonnegative scalar $\tilde{s} \geq \rho(\tilde{\mathcal{D}})$ such that $\mathcal{A} - \lambda I = \tilde{s} I -\tilde{\mathcal{D}}$. Thus $\mathcal{A} = (\lambda + \tilde{s}) I  - \tilde{\mathcal{D}}$. From Lemma \ref{lemma:H-eigenvalue_Z-tensor}, $\lambda_{\min}(\mathcal{A}) = (\lambda + \tilde{s}) - \rho(\tilde{\mathcal{D}})  \geq \lambda$ which contradicts the condition $\lambda > \lambda_{\min}(\mathcal{A})$. Thus, $\mathcal{A} - \lambda I$ is not an $M$-tensor.
\end{proof}

Note that from Corollary~\ref{cor:F(A)}  and the definition of $H^+$-tensors in terms of the comparison tensor (cf., \eqref{eq:comptensor}), one obtains the following characterization for symmetric $M$-tensors.

\begin{corollary}
	\label{cor:F_M(A)}
	Let $m, n \in \mathbb{N}$. Then $\mathcal{A} = (a_{i_1 i_2 \dots i_m}) \in \mathbb{S}_{m,n}$ is an $M$-tensor if and only if 
	$a_{i_1 i_2 \dots i_m}\le 0$ for all $(i_1,i_2, \dots, i_m) \in  \mathscr{D}_n^m$, and $\mathbb{F}(\mathcal{A}) \neq \emptyset$.
\end{corollary}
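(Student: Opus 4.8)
The plan is to reduce both implications to Corollary~\ref{cor:F(A)} via one elementary observation about the comparison tensor~\eqref{eq:comptensor}: if a (symmetric) tensor $\mathcal{A}$ has nonpositive off-diagonal entries and nonnegative diagonal entries, then $M(\mathcal{A}) = \mathcal{A}$. Indeed, in that case $M(\mathcal{A})_{ii\dots i} = |a_{ii\dots i}| = a_{ii\dots i}$, while for every off-diagonal index $M(\mathcal{A})_{p_1\dots p_m} = -|a_{p_1\dots p_m}| = a_{p_1\dots p_m}$. For tensors with this sign pattern, ``$\mathcal{A}$ is an $M$-tensor'' and ``$\mathcal{A}$ is an $H^+$-tensor'' say the same thing: the latter means $M(\mathcal{A}) = \mathcal{A}$ is an $M$-tensor and the diagonal is nonnegative. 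Since $H^+$ coincides with GDD$^+$ (Corollary~\ref{coro:gdd_h}) and Corollary~\ref{cor:F(A)} characterizes GDD$^+$ as $\mathbb{F}(\mathcal{A}) \neq \emptyset$, the statement will follow once the sign pattern is verified on each side.

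For the ``only if'' direction, let $\mathcal{A} = (a_{i_1 i_2 \dots i_m})$ be an $M$-tensor and write $\mathcal{A} = s\mathcal{I} - \mathcal{D}$ with $\mathcal{D}$ nonnegative and $s \ge \rho(\mathcal{D})$; note $\mathcal{D} = s\mathcal{I} - \mathcal{A}$ is symmetric because $\mathcal{I}$ and $\mathcal{A}$ are. Every off-diagonal entry of $\mathcal{A}$ equals $-\mathcal{D}_{i_1\dots i_m} \le 0$; in particular $a_{i_1 i_2 \dots i_m} \le 0$ for all $(i_1,i_2,\dots,i_m) \in \mathscr{D}_n^m$. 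For the diagonal, Theorem~\ref{lemma:nonnegative_tensor_max} gives that the largest $H$-eigenvalue of $\mathcal{D}$ equals $\max\{\mathcal{D} x^m : x \in \R^n_+, \sum_{j=1}^n x_j^m = 1\} \ge \mathcal{D} e_i^m = \mathcal{D}_{ii\dots i}$, and this eigenvalue is at most $\rho(\mathcal{D}) \le s$; hence $a_{ii\dots i} = s - \mathcal{D}_{ii\dots i} \ge 0$. By the observation, $M(\mathcal{A}) = \mathcal{A}$, so $\mathcal{A}$ is an $H$-tensor with nonnegative diagonal, i.e.\ a GDD$^+$ tensor by Corollary~\ref{coro:gdd_h}, and Corollary~\ref{cor:F(A)} yields $\mathbb{F}(\mathcal{A}) \neq \emptyset$.

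For the ``if'' direction, assume $a_{i_1 i_2 \dots i_m} \le 0$ for all $(i_1,i_2,\dots,i_m) \in \mathscr{D}_n^m$ and $\mathbb{F}(\mathcal{A}) \neq \emptyset$. By Corollary~\ref{cor:F(A)} and Corollary~\ref{coro:gdd_h}, $\mathcal{A}$ is an $H^+$-tensor; hence its diagonal entries are nonnegative and $M(\mathcal{A})$ is an $M$-tensor. Since $\mathcal{A}$ is symmetric, the hypothesis on $\mathscr{D}_n^m$ propagates to every off-diagonal index (each sorts to an element of $\mathscr{D}_n^m$), so the observation gives $M(\mathcal{A}) = \mathcal{A}$, and therefore $\mathcal{A}$ is an $M$-tensor.

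The only step that is more than bookkeeping --- and the one I would flag as the main, albeit minor, obstacle --- is the nonnegativity of the diagonal entries of an $M$-tensor; I would handle it via Theorem~\ref{lemma:nonnegative_tensor_max} as above (using that $\mathcal{D}$ inherits symmetry from $\mathcal{A}$ and $\mathcal{I}$), or equivalently via Lemma~\ref{lemma:H-eigenvalue_Z-tensor} together with the evaluation $\mathcal{D} e_i^m = \mathcal{D}_{ii\dots i}$. Everything else is immediate from the definition~\eqref{eq:comptensor} of the comparison tensor and the equivalences already established.
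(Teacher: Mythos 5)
Your proof is correct and follows the same route the paper intends: the paper states this corollary as an immediate consequence of Corollary~\ref{cor:F(A)} and the definition~\eqref{eq:comptensor} of the comparison tensor, without writing out a proof, and your argument is exactly the natural fleshing-out of that remark via the observation that $M(\mathcal{A})=\mathcal{A}$ for symmetric tensors with nonpositive off-diagonal and nonnegative diagonal entries. Your careful justification of the diagonal nonnegativity of an $M$-tensor (via Theorem~\ref{lemma:nonnegative_tensor_max} applied to the symmetric nonnegative tensor $\mathcal{D}=s\mathcal{I}-\mathcal{A}$) is a detail the paper glosses over, and it is handled correctly.
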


Proposition~\ref{prop:M-tensor_eigenvalue}, the characterization of symmetric $M$-tensors in Corollary~\ref{cor:F_M(A)}, and~\eqref{eq:F_A_long}, readily provide a way to compute the minimum $H$-eigenvalue of symmetric $Z$-tensors in polynomial time by solving a power cone optimization problem.

\begin{corollary}
	\label{cor:Meigen}
	For $m,n \in \mathbb{N}$, if $\mathcal{A} \in \mathbb{S}_{m,n}$ is a $Z$-tensor, then 
	\begin{equation}
		\label{eq:M-tensor_eigenvalue_compute}
		\lambda_{\min}(\mathcal{A}) =  {\max} \left\{  \lambda:  \mathbb{F}(\mathcal{A} - \lambda \mathcal{I}) \neq \emptyset \right\}.
	\end{equation}
\end{corollary}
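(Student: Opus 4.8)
The plan is to show that the maximum in~\eqref{eq:M-tensor_eigenvalue_compute} is attained precisely at $\lambda = \lambda_{\min}(\mathcal{A})$ by combining Proposition~\ref{prop:M-tensor_eigenvalue} with the characterization of $M$-tensors given in Corollary~\ref{cor:F_M(A)}. The key observation is that, for a fixed $M$-tensor $\mathcal{A}$ and a scalar $\lambda$, the tensor $\mathcal{A} - \lambda \mathcal{I}$ differs from $\mathcal{A}$ only in its diagonal entries; in particular, for any $\lambda$ the off-diagonal condition $a_{i_1 i_2 \dots i_m} \le 0$ for all $(i_1, \dots, i_m) \in \mathscr{D}_n^m$ holds for $\mathcal{A} - \lambda \mathcal{I}$ if and only if it holds for $\mathcal{A}$, and it does hold since $\mathcal{A}$ is an $M$-tensor (every $M$-tensor is a $Z$-tensor, hence has nonpositive off-diagonal entries).

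First I would argue that $\lambda_{\min}(\mathcal{A})$ is feasible for the right-hand side of~\eqref{eq:M-tensor_eigenvalue_compute}. By Proposition~\ref{prop:M-tensor_eigenvalue}, taking $\lambda = \lambda_{\min}(\mathcal{A})$, the tensor $\mathcal{A} - \lambda_{\min}(\mathcal{A})\mathcal{I}$ is an $M$-tensor. Applying Corollary~\ref{cor:F_M(A)} to this tensor then gives $\mathbb{F}(\mathcal{A} - \lambda_{\min}(\mathcal{A})\mathcal{I}) \neq \emptyset$, and, as noted above, the off-diagonal entries of $\mathcal{A} - \lambda_{\min}(\mathcal{A})\mathcal{I}$ coincide with those of $\mathcal{A}$ and are therefore nonpositive. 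Hence $\lambda_{\min}(\mathcal{A})$ satisfies both constraints, so it is a feasible value and the maximum is at least $\lambda_{\min}(\mathcal{A})$.

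Next I would show optimality, i.e.\ that no $\lambda > \lambda_{\min}(\mathcal{A})$ is feasible. Suppose $\lambda > \lambda_{\min}(\mathcal{A})$ satisfies both constraints in~\eqref{eq:M-tensor_eigenvalue_compute}. Since the off-diagonal entries of $\mathcal{A} - \lambda \mathcal{I}$ are nonpositive and $\mathbb{F}(\mathcal{A} - \lambda \mathcal{I}) \neq \emptyset$, Corollary~\ref{cor:F_M(A)} implies that $\mathcal{A} - \lambda \mathcal{I}$ is an $M$-tensor. But this directly contradicts the second assertion of Proposition~\ref{prop:M-tensor_eigenvalue}, which states that $\mathcal{A} - \lambda \mathcal{I}$ is \emph{not} an $M$-tensor for any $\lambda > \lambda_{\min}(\mathcal{A})$. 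Therefore the feasible region is exactly $\{\lambda : \lambda \le \lambda_{\min}(\mathcal{A})\}$, and the maximum equals $\lambda_{\min}(\mathcal{A})$, which is attained.

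The main subtlety — rather than a genuine obstacle — is bookkeeping the diagonal shift correctly: one must check that $\mathcal{A} - \lambda\mathcal{I}$ is symmetric (clear, since $\mathcal{I}$ is diagonal and symmetric), that Corollary~\ref{cor:F_M(A)} is being applied to the shifted tensor and not to $\mathcal{A}$ itself, and that the off-diagonal constraint is genuinely $\lambda$-independent so that it is automatically satisfied whenever $\mathcal{A}$ is an $M$-tensor. A secondary point worth a sentence is that the maximum is indeed attained (not merely a supremum), which follows because $\lambda_{\min}(\mathcal{A})$ itself lies in the feasible set by the first step; this also justifies writing $\max$ rather than $\sup$ in~\eqref{eq:M-tensor_eigenvalue_compute}.
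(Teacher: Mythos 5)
Your proposal is correct and follows essentially the same route as the paper: the paper's proof likewise observes via Proposition~\ref{prop:M-tensor_eigenvalue} that $\lambda_{\min}(\mathcal{A}) = \max\{\lambda : \mathcal{A} - \lambda\mathcal{I} \text{ is an } M\text{-tensor}\}$ and then invokes Corollary~\ref{cor:F_M(A)} to rewrite membership in the set of $M$-tensors. Your additional remarks (that the off-diagonal constraint is $\lambda$-independent and that the maximum is attained) are correct elaborations of details the paper leaves implicit.
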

\begin{proof}
	From Proposition~\ref{prop:M-tensor_eigenvalue}, it follows that
	\[
	\lambda_{\min}(\mathcal{A}) =  {\max} \left\{  \lambda:  \mathcal{A} - \lambda \mathcal{I} \text{ is an $M$-tensor} \right\}.
	\]
	Then using Corollary~\ref{cor:F_M(A)} to characterize the set of symmetric $M$-tensors, it follows that 
	
	\begin{equation}
		\label{eq:M-tensor_eigenvalue_compute_pf}
		\lambda_{\min}(\mathcal{A}) = \max \left\{
		\begin{aligned}
			& \lambda : \mathbb{F}(\mathcal{A} - \lambda \mathcal{I}) \neq \emptyset, \\
			& (\mathcal{A} - \lambda \mathcal{I})_{i_1 i_2 \dots i_m} \le 0, \forall ({i_1, i_2, \dots, i_m}) \in \mathscr{D}_n^m
		\end{aligned}
		\right\}.
	\end{equation}
	If $\mathcal{A}$ is a symmetric $Z$-tensor, then for any $\lambda \in \mathbb{R}$,
	\[
	(\mathcal{A} - \lambda \mathcal{I})_{i_1 i_2 \dots i_m} \le 0, \forall ({i_1,i_2, \dots, i_m}) \in  \mathscr{D}_n^m.
	\]
	Thus, one can simplify \eqref{eq:M-tensor_eigenvalue_compute_pf} and obtain \eqref{eq:M-tensor_eigenvalue_compute} for a symmetric $Z$-tensor $\mathcal{A}$.
\end{proof}

 Furthermore, according to~\citep{ding2013m} and Theorem~3.3 in~\citep{zhang2012m}, a $Z$-tensor $\mathcal{A}$ is a strong $M$-tensor if and only if $\lambda_{\min}(\mathcal{A}) > 0$. Consequently, \eqref{eq:M-tensor_eigenvalue_compute} can also be used to determine whether a symmetric $Z$-tensor is a strong $M$-tensor.

Symmetric $M$-tensors are all symmetric $Z$-tensors. Thus, Equation~\eqref{eq:F_A_long}, and the discussion that follows it, mean that one can compute the minimum $H$-eigenvalue of a symmetric $M$-tensor by solving the power cone optimization problem~\eqref{eq:M-tensor_eigenvalue_compute}. To benchmark the performance of the proposed method, we apply it to 
obtain the minimum $H$-eigenvalue of the symmetrized\footnote{Tensor $\mathcal{A}:=\text{sym}(\mathcal{B})$ is called the symmetrized version of tensor $\mathcal{B}$ if their corresponding polynomials are the same and $\mathcal{A}$ is a symmetric tensor. In what follows, for $\mathcal{A} \in \mathbb{T}_{m,n}$, denote sym($\mathcal{A} $) as the symmetrized version of~$\mathcal{A} $.} $M$-tensors considered in 
Example~3.1 and Example~3.2 in~\citep{huang2018some}. Specifically, in Table~\ref{tab:bounds}, we compare the best upper and lower bounds for the minimum $H$-eigenvalue of the symmetrized $M$-tensors obtained using the methodologies proposed in~\citep{huang2018some}, versus the value of the minimum $H$-eigenvalue of these $M$-tensors obtained using~\eqref{eq:M-tensor_eigenvalue_compute}.

\begin{table}[!htb]
	\begin{center}
		\begin{tabular}{
				>{\centering\arraybackslash}m{2.0cm}
				>{\centering\arraybackslash}m{0.3cm}
				>{\centering\arraybackslash}m{0.3cm}
				>{\centering\arraybackslash}m{2cm}
				>{\centering\arraybackslash}m{1.5cm}
				>{\centering\arraybackslash}m{2cm}}
			\toprule
			& & & \multicolumn{3}{c}{minimum $H$-eigenvalue} \\
			\cmidrule{4-6}
			\multicolumn{1}{c}{symmetrized} & & & \multicolumn{1}{c}{best lower bound} & & \multicolumn{1}{c}{best upper bound} \\
			\multicolumn{1}{c}{$M$-tensor} & \multicolumn{1}{c}{$m$} & \multicolumn{1}{c}{$n$} & \multicolumn{1}{c}{\citep{huang2018some}} & \multicolumn{1}{c}{value \eqref{eq:M-tensor_eigenvalue_compute}} & \multicolumn{1}{c}{\citep{huang2018some}} \\
			\midrule
			Example~3.1 in~\citep{huang2018some} & 3 & 3 & 1.1196 & 4.4404 & 6.9383 \\
			Example~3.2 in~\citep{huang2018some} & 3 & 3 & 2.6088 & 6.3122 & 9.1984 \\
			\bottomrule
		\end{tabular}
	\end{center}
	\caption{Minimum $H$-eigenvalues of symmetric $M$-tensors. \label{tab:bounds}}
\end{table}

The results in Table~\ref{tab:bounds} show that, neither the lower or upper bounds for the minimum $H$-eigenvalues resulting from the results in~\citep{huang2018some} are particularly tight in comparison with the actual minimum $H$-eigenvalues. 

To show the efficiency of the proposed method on computing the minimum $H$-eigenvalues of symmetric $M$-tensors, we also compare the proposed method with the method in~\citep{chen2016computing}; namely, a {\em homotopy continuation} type algorithm that finds complex generalized eigenpairs. This method combines a heuristic approach and a Newton homotopy method to extract real eigenpairs. The advantage of this method is that it works for general tensors. However, eigenvalue computation is very difficult for third or higher order tensors~\citep{hillar2013most}. In general, the algorithm proposed in~\citep{chen2016computing} is not guaranteed to work in polynomial time. 
Thus, compared to the method proposed in this work, which is particularly designed for symmetric $M$-tensors, the method in~\citep{chen2016computing} is very inefficient when it is applied to this class of tensors. The results in Table \ref{tab:comparison2} show that both the method in~\citep{chen2016computing} and the method proposed here, return the same minimum $H$-eigenvalues for symmetric $M$-tensors. However, the computation time of the method proposed here is about four times lower than the one of the method in~\citep{chen2016computing}. Actually, the time used in solving the corresponding power cone optimization problems of the method proposed here 
is an order of magnitude lower than the time used by the method in~\citep{chen2016computing} (see the numbers in parenthesis of the last column in Table~\ref{tab:comparison2}). That is, most of the computational time used to implement the methodology proposed here is spent constructing the actual power cone optimization problems that need to be solved.
Thus, with a better optimization model formulation framework (currently we use SPOT~\citep{SPOT} which is quite inefficient), the total solution time of the method proposed here can be improved a lot. In conclusion, compared to the method in~\citep{chen2016computing}, the proposed method in this work is both theoretically and empirically more efficient in obtaining the minimum $H$-eigenvalue of symmetric $M$-tensors.


\begin{table}[!htb]
	\begin{center}
		\begin{tabular}{
				>{\centering\arraybackslash}m{1.8cm}
				>{\centering\arraybackslash}m{0.2cm}
				>{\centering\arraybackslash}m{0.2cm}
				>{\centering\arraybackslash}m{1.7cm}
				>{\centering\arraybackslash}m{0.8cm}
				>{\centering\arraybackslash}m{1.7cm}
				>{\centering\arraybackslash}m{1.3cm}}
			\toprule
			& & & \multicolumn{2}{c}{minimum $H$-eigenvalue} & \multicolumn{2}{c}{solution time} \\
			\cmidrule{4-5} \cmidrule{6-7}
			\multicolumn{1}{c}{symmetrized} & & & \multicolumn{1}{c}{value} & \multicolumn{1}{c}{value} & \multicolumn{1}{c}{time (s)} &  \multicolumn{1}{c}{time (s)} \\
			\multicolumn{1}{c}{$M$-tensor} & \multicolumn{1}{c}{$m$} & \multicolumn{1}{c}{$n$} & \multicolumn{1}{c}{ \citep{chen2016computing}} &\multicolumn{1}{c}{\eqref{eq:M-tensor_eigenvalue_compute}} &\multicolumn{1}{c}{\citep{chen2016computing}} &\multicolumn{1}{c}{\eqref{eq:M-tensor_eigenvalue_compute}} \\
			\midrule
			Example~3.1 \newline in~\citep{huang2018some} & 3 & 3 & 4.4404 & 4.4404 & 0.2681 & 0.0623 (0.0169) \\
			Example~3.2 \newline in~\citep{huang2018some} & 3 & 3 & 6.3122 & 6.3122 & 0.2715 & 0.0649 (0.0225) \\
			\bottomrule
		\end{tabular}
	\end{center}
	\caption{Minimum $H$-eigenvalues of symmetric $M$-tensors and solution time using~\eqref{eq:M-tensor_eigenvalue_compute} and the method in~\citep{chen2016computing} (The numbers in parenthesis in the last column are the times used in solving the corresponding power cone optimization problems). \label{tab:comparison2}}
\end{table}

The proposed method to compute the minimum $H$-eigenvalue of symmetric $M$-tensors can also be used to compute the largest $H$-eigenvalue of symmetric nonnegative tensors. Assume $\mathcal{D}$ is a symmetric nonnegative tensor. From Lemma~\ref{lemma:H-eigenvalue_Z-tensor}, $\rho(\mathcal{D})$ can be obtained by computing the minimum $H$-eigenvalue of $-\mathcal{D}$ using~\eqref{eq:M-tensor_eigenvalue_compute}. 
To illustrate the practical applications of this proposed method, we show how it can be used to obtain an upper bound for the chromatic number of a {\em hypergraph}~\citep[see, e.g.,][]{chang2013survey}. Before presenting the application, we introduce some definitions related to hypergraphs. For more details, we refer the reader to~\citep{chang2013survey,qi2013h,cooper2012spectra}.

\begin{definition}[{\citep[][Def. 6.1]{chang2013survey}}]
	\label{def:hypergraph}
	A hypergraph $\mathcal{H}$ is pair of $(V, E)$ where $E \in \mathcal{P}(V)$, the power set of $V$. The elements of $V = V(\mathcal{H})$ are called vertices, and the elements of $E = E(\mathcal{H})$ are called edges. A hypergraph is said to be $k$-uniform for an integer $k \geq 2$, if for any $e \in E(\mathcal{H})$, the cardinality of the subset, $card(e) = k$. 
\end{definition}

\begin{definition}[{\citep[][Def.~6.2]{chang2013survey}}]
	\label{def:adjacency_tensor_hypergraph}
	The adjacency tensor $\mathcal{A}_{\mathcal{H}}$ for a $m$-uniform hypergraph $H = (V, E)$, denoted as $\mathcal{A}_\mathcal{H} = (a_{i_1\cdots i_m}) \in \mathbb{S}_{m,n}$, where $n$ is the number of nodes in set $V$, is the symmetric tensor given by 
	\begin{equation}
		\label{eq:adjacency_tensor}
		\mathcal{A}_{\mathcal{H}} = \frac{1}{(m-1)!}\begin{cases}
			1 & \text{if } \{i_1, \cdots, i_m\} \in E, \\
			0 & \text{otherwise}.
		\end{cases}
	\end{equation}
\end{definition}
For a hypergraph $\mathcal{H}$, a function $f:  V(\mathcal{H}) \rightarrow [r]$ is a (weak) proper $r$-coloring of $\mathcal{H}$ if for every edge $\{v_1, v_2, \cdots, v_k\}$, there exist $i \neq  j $ such that $f(v_i) \neq f(v_j)$. The (weak) {\em  chromatic number} of $\mathcal{H}$, denoted $\chi(\mathcal{H})$, is the minimum $r$ such that $\mathcal{H}$ has a proper $r$-coloring. The chromatic number of a hypergraph can be bounded using the largest $H$-eigenvalue of the adjacency tensor.

\begin{theorem}[{\citep[][Thm.~3.10]{cooper2012spectra}}]
	For any $m$-uniform hypergraph $\mathcal{H}$, $\chi(\mathcal{H}) \leq \lambda_{\max}(\mathcal{A}_{\mathcal{H}}) +1$.
\end{theorem}

Following our discussion and Lemma~\ref{lemma:H-eigenvalue_Z-tensor}, $\lambda_{\max}(\mathcal{A}_{\mathcal{H}}) = - \lambda_{\min}(-\mathcal{A}_{\mathcal{H}})$. Next we use this approach to compute an upper bound for the chromatic number of a 3-uniform hypergraph.

\begin{example}[Hypergraph application]
	\label{example:hypergraph}
	Let $\mathcal{H}$ be a $3$-uniform hypergraph whose vertex set and edge set are $V(\mathcal{H}) = \{ 1, 2,3,4\}$ and $E(\mathcal{H}) = \{123, 134\}$, respectively. Then $\mathcal{A}_{\mathcal{H}} = (a_{i_1 i_2 i_3})$ is a symmetric tensor in $\mathbb{S}_{3,4}$, where $a_{i_1 i_2 i_3} = 1/2$ if $\{i_1, i_2, i_3\} = \{ 1, 2, 3 \}$ or $\{i_1, i_2, i_3\} = \{ 1, 3 ,4 \}$, and $a_{i_1 i_2 i_3} = 0$ otherwise. With~\eqref{eq:M-tensor_eigenvalue_compute}, one can obtain $\lambda_{\max}(\mathcal{A}_{\mathcal{H}}) = - \lambda_{\min}(-\mathcal{A}_{\mathcal{H}}) = 1.5874$. Thus, the upper bound of $\chi(\mathcal{H})$ is 2.5874. Actually for the given $\mathcal{H}$, $\chi(\mathcal{H}) = 2$. Figure \ref{figure:hypergraph} is the colored $\mathcal{H}$. Node $1$ and $4$ are colored with gray while node $2$ and $3$ are colored with black. For edge $123$ and $134$, node $1$ and node $3$ are with different colors. Thus this is a kind of proper $2$-coloring for $\mathcal{H}$. On the other hand, it is impossible to have a proper $1$-coloring for $\mathcal{H}$. Thus, $\chi(\mathcal{H}) = 2$.
	
	\begin{figure}[bph!]
		\caption{$3$-uniform hypergraph $\mathcal{H}$ in Example \ref{example:hypergraph}}
		\includegraphics[scale=0.6]{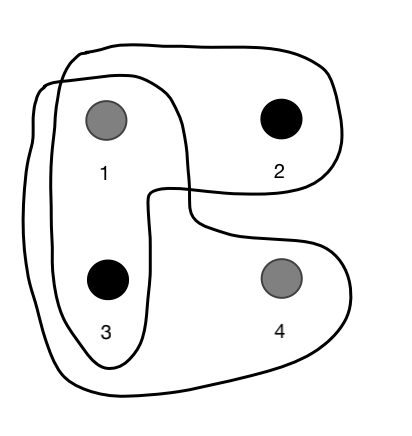}
		\label{figure:hypergraph}
		\centering
	\end{figure}	
	
\end{example}

The analysis of properties of hypergraphs, such as their chromatic number, arises when modeling problems in areas as varied as 
informatics, transportation, molecular biology, and telecommunications, to name just a few~\citep[see, e.g.,][]{bretto2013applications}.

Note that $Z$-tensors with positive minimum $H$-eigenvalue are strong $M$-tensors. Thus, computing the minimum $H$-eigenvalue of $Z$-tensors is also useful in deciding if a $Z$-tensor is a strong $M$-tensor. This question arises when one is interested in finding the sparsest solutions to tensor complementarity problems. Specifically, The authors in \citep{luo2017sparsest} propose the following optimization problem to find one of the sparsest solutions to a tensor complementarity problem	
\begin{equation}
	\label{eq:sparsest1}
	\text{min:} \  \| x \|_0, \ \text{s.t.}\  \mathcal{A} x^{m-1} - b \geq 0 ,\ x \geq 0 , \ x^\intercal (\mathcal{A}x^{m-1} - b) = 0.
\end{equation}	
The objective of~\eqref{eq:sparsest1} is written using the nuclear (i.e., $l_0$) norm. In~\citep{luo2017sparsest}, it is shown that  if $\mathcal{A}$ is a $Z$-tensor, then a sparsest solution of the above tensor complementarity problem can be obtained  by
solving the following polynomial optimization problem

\begin{equation}
	\label{eq:sparsest2}
	\text{min:} \ \|x \|_1, \ \text{s.t.} \ \mathcal{A}x^{m-1} = b, \ x\geq 0.
\end{equation}

Furthermore, they show that if $\mathcal{A}$ is a strong $M$-tensor, problem \eqref{eq:sparsest2} is uniquely solvable and the unique solution is also an optimal solution to problem \eqref{eq:sparsest1}. Besides, when $\mathcal{A}$ is a strong $M$-tensor, the authors in \citep{ding2016solving} propose algorithms which can solve problem \eqref{eq:sparsest2} in polynomial time. Thus, it is helpful to check if $\mathcal{A}$ in problem \eqref{eq:sparsest1} is a strong $M$-tensor so that one can solve it efficiently. In Example \ref{example:tan}, we illustrate this result by considering a tensor that is commonly used in the related literature.

\begin{example}[Sparse solutions of multilinear systems of equations]
	\label{example:tan}
	Let $\mathcal{A}$ in~\eqref{eq:sparsest1} be the $Z$-tensor given by 
		$\mathcal{A} = s \mathcal{I} - \mathcal{D}$,
	where $\mathcal{I}\in \mathbb{S}_{3,2}$ is a diagonal tensor and $\mathcal{D} =(d_{i_1, i_2, i_3}) \in \mathbb{S}_{3,2}$ with $d_{i_1, i_2, i_3} = |\text{tan}(i_1 + i_2 + i_3)|$. Let $s = (1+ \alpha)*\max_{1\leq i\leq 2}(\mathcal{D} {\bf{e}}^{m-1})$, where ${\bf{e}} $ is vector of ones in dimension $2$ and $a = 0.01$. This example is introduced in~\citep[][Ex.~1]{liang2021alternating}. $\mathcal{A}$ is a symmetric tensor, and using~\eqref{eq:M-tensor_eigenvalue_compute}, we find that $\lambda_{\min}(\mathcal{A}) = 1.1538$, which implies that $\mathcal{A}$ is a strong $M$-tensor. Thus, from~\citep[][Thm. 3.2]{ding2016solving}, if $b$ in~\eqref{eq:sparsest1} is positive, one can solve problem~\eqref{eq:sparsest1} in polynomial time by solving problem~$\eqref{eq:sparsest2}$ and the solution is unique.
\end{example}

Furthermore, Corollary \ref{cor:Meigen} can also be used to obtain lower bounds for the minimum $H$-eigenvalues of general $M$-tensors in polynomial time by solving a power cone optimization problem. This follows from the fact that the minimum $H$-eigenvalues of an $M$-tensor is always greater than or equal to the minimum $H$-eigenvalues of its corresponding symmetrized tensor. We prove this fact in the discussion next.
\begin{lemma}[{\citep[][Lem.~2.3]{zhang2014m}}]
	\label{lemma:min_eigenvalue_equation}
	For $m,n \in \mathbb{N}$ and $M$-tensor $\mathcal{A} \in \mathbb{S}_{m,n}$,
	\begin{equation}
		\label{prob:min_H_eigenvalue1}
		\lambda_{\min}(\mathcal{A}) = {\min} \left\{  \mathcal{A} x^{m}: x\in \R^n, \sum_{i=1}^n x_i^{m} = 1  \right\}.
	\end{equation}
\end{lemma}

Let $\tau (\mathcal{A}) = \min \{ \text{Re}(\lambda): \lambda \in \sigma(\mathcal{A}) \}$ where $\sigma(\mathcal{A})$ is the set of all the eigenvalues of $\mathcal{A}$.
\begin{lemma}[{\citep[][Thm.~3.4(a)]{zhang2014m}}]
	\label{lemma:tau_lambda}
	If $m,n \in \mathbb{N}$ and $M$-tensor $\mathcal{A} \in \mathbb{T}_{m,n}$, then $\tau (\mathcal{A}) $ is an $H$-eigenvalue of $\mathcal{A}$. That is~$\lambda_{\min}(\mathcal{A}) = \tau (\mathcal{A})$.
\end{lemma}
In light of Lemma \ref{lemma:tau_lambda}, in what follows, we use $\lambda_{\min}(\mathcal{A})$ to refer to $\tau (\mathcal{A})$ for an $M$-tensor $\mathcal{A}$.

\begin{lemma}
	\label{lemma: sym_minimum_eigenvalue}
	If $\mathcal{A} \in  \mathbb{T}_{m,n}$, then 
	\begin{equation}
		\lambda_{\min}(\mathcal{A}) \geq  \lambda_{\min}(\text{sym}(\mathcal{A} )).
	\end{equation}
\end{lemma}
\begin{proof}
	From the definition of $H$-eigenvalue of a tensor (see Section~\ref{sec:introduction}), if the real value $\lambda$ is an $H$-eigenvalue of $\mathcal{A}$, then there exists $x \in \mathbb{R}^n \backslash \{0 \}$ such that 
	\begin{equation*}
		\mathcal{A}x^{m-1} = \lambda x^{[m-1]}.
	\end{equation*}
	Thus, $\lambda$ satisfies $	\mathcal{A}x^{m} = \lambda  \sum_{i=1}^n x_i^{m}$. When $m$ is even, then clearly $ \sum_{i=1}^n x_i^{m} > 0$. When $m$ is odd, if $\sum_{i=1}^n x_i^{m} < 0$, one can set $y = -x$. Then $y$ and $\lambda $ satisfy
	\[
	\sum_{i=1}^n y_i^{m} > 0, \quad \mathcal{A}y^{m} = \lambda  \sum_{i=1}^n y_i^{m}.
	\]
	Thus, for each $H$-eigenvalue $\lambda$ of $\mathcal{A}$, there exists $x \in \mathbb{R}^n \backslash \{0 \}$ such that
	\[
	\sum_{i=1}^n x_i^{m} > 0, \quad \mathcal{A}x^{m} = \lambda  \sum_{i=1}^n x_i^{m}.
	\]
	Following this result, we have
	
	\begin{equation*}
		\lambda \geq {\min} \left\{  \mathcal{A} x^{m}: x\in \R^n, \sum_{i=1}^n x_i^{m} = 1  \right\}.
	\end{equation*}
	Furthermore, $\lambda \geq 	\lambda_{\min}(\text{sym}(\mathcal{A} ))$ from Lemma~\ref{lemma:min_eigenvalue_equation}.
\end{proof}

To show the performance of the proposed method in obtaining lower bounds for the minimum $H$-eigenvalue of general $M$-tensors, we apply it to compute the lower bounds of the minimum $H$-eigenvalues of the $M$-tensors considered in 
Example~3.1 and Example~3.2 in~\citep{huang2018some} (i.e., different from the tensors in Table~\ref{tab:bounds}, here the tensors are directly taken from~\citep{huang2018some}). Namely, in Table~\ref{tab:bounds_asy}, we list the best upper and lower bounds for the minimum $H$-eigenvalue of the $M$-tensors using the methods proposed in~\citep{huang2018some}, versus the lower bounds of the minimum $H$-eigenvalue of these $M$-tensors obtained using~\eqref{eq:M-tensor_eigenvalue_compute}.


\begin{table}[!htb]
	\begin{center}
		\begin{tabular}{
				>{\centering\arraybackslash}m{1.8cm}
				>{\centering\arraybackslash}m{0.2cm}
				>{\centering\arraybackslash}m{0.2cm}
				>{\centering\arraybackslash}m{1.8cm}
				>{\centering\arraybackslash}m{1.8cm}
				>{\centering\arraybackslash}m{1.8cm}}
			\toprule
			& & & \multicolumn{3}{c}{minimum $H$-eigenvalue}\\
			\cmidrule{4-6}
			\multicolumn{1}{c}{$M$-tensor} &  &  & \multicolumn{1}{c}{best lower bound} & \multicolumn{1}{c}{lower bound} & \multicolumn{1}{c}{best upper bound} \\
			\multicolumn{1}{c}{} & \multicolumn{1}{c}{$m$} & \multicolumn{1}{c}{$n$} &  \multicolumn{1}{c}{\citep{huang2018some}} & \multicolumn{1}{c}{ \eqref{eq:M-tensor_eigenvalue_compute}}& \multicolumn{1}{c}{\citep{huang2018some}}  \\
			\midrule
			Example~3.1 \newline in~\citep{huang2018some} & 3 & 3 & 3.0738 & 4.4404 & 6.8390 \\
			Example~3.2 \newline in~\citep{huang2018some} & 3 & 3 & 4.0768 & 6.3122 & 9.0313 \\
			\bottomrule
		\end{tabular}
	\end{center}
	\caption{Minimum $H$-eigenvalues of asymmetric $M$-tensors. \label{tab:bounds_asy}}
\end{table}

Table \ref{tab:bounds_asy} shows that the lower bounds obtained using~\eqref{eq:M-tensor_eigenvalue_compute} are much tighter than the lower bounds obtained in~\citep{huang2018some}. This empirically indicates that the proposed lower bound is able to provide high quality bounds when comparing with the methods presented in~\citep{huang2018some}.

As another application of the results above, one can verify that an asymmetric $Z$-tensor $\mathcal{A}$ is a strong $M$-tensor by computing the minimum $H$-eigenvalue of $\text{sym}(\mathcal{A} )$. From Lemma \ref{lemma: sym_minimum_eigenvalue}, if $ \lambda_{\min}(\text{sym}(\mathcal{A} )) > 0 $, then $\lambda_{\min}(\mathcal{A}) > 0$. Thus, $\mathcal{A}$ is also a strong $M$-tensor. In Example~\ref{example:pdf}, we use this fact to validate that a tensor that arises when  numerical solving a partial differential equation is a strong $M$-tensor.

\begin{example}[Solutions of multilinear systems of equations]
	\label{example:pdf}
	Consider the problem of numerically solving the Klein–Gordon equation \citep{matsuno1987exact,rheinboldt1998methods}:
	\[ \begin{cases} 
		u(x)^{m-2}\cdot \Delta u(x) = -f(x), &  \quad  \text{in } \Omega, \\
		u(x) = g(x), & \quad \text{on }\partial \Omega,
	\end{cases}
	\]
	where $\Delta = \sum_{k=0}^{d} (\partial^2 / \partial x_k^2)$, $\Omega = [0,1]^d$, and $m = 3, 4, \dots$. When $d=1$, this Klein–Gordon equation can be discretized as the following multilinear system
	\begin{equation*}
		\mathcal{L}_h x^{m-1} = f, 
	\end{equation*}
	in which $h = 1/(n-1)$ and $\mathcal{L}_h = (({\mathcal{L}_h})_{i_1 i_2, \dots, i_m}) \in \mathbb{T}_{m,n}$ with entries defined by
	
%
	
	\[
	\begin{cases} 
		({\mathcal{L}_h})_{1,1, \dots, 1} = ({\mathcal{L}_h})_{n,n, \dots, n} = 1/h^2, & \\
		({\mathcal{L}_h})_{i,i, \dots, i} = 2/h^2, & \text{for all } i = 2, 3, \dots, n-1, \\
		({\mathcal{L}_h})_{i, i-1, i, \dots, i} = -1/h^2(m-1), & \text{for all } i = 2, 3, \dots, n-1, \\
		({\mathcal{L}_h})_{i ,i,i-1 , \dots, i} = -1/h^2(m-1), & \text{for all } i = 2, 3, \dots, n-1, \\
		({\mathcal{L}_h})_{i,i, i, \dots, i-1} = -1/h^2(m-1), & \text{for all } i = 2, 3, \dots, n-1, \\
		({\mathcal{L}_h})_{i, i+1, i, \dots, i} = -1/h^2(m-1), & \text{for all } i = 2, 3, \dots, n-1, \\
		({\mathcal{L}_h})_{i ,i,i+1 , \dots, i} = -1/h^2(m-1), & \text{for all } i = 2, 3, \dots, n-1, \\
		({\mathcal{L}_h})_{i,i, i, \dots, i+1} = -1/h^2(m-1), & \text{for all } i = 2, 3, \dots, n-1.
	\end{cases}
	\]
	
	When $m = 3$ and $n = 3$, with ~\eqref{eq:M-tensor_eigenvalue_compute}, we obtain that $\lambda_{\min}(\text{sym}(\mathcal{L}_{0.5} )) = 2.6667 $. Thus, using Lemma~\ref{lemma: sym_minimum_eigenvalue}, $\lambda_{\min}(\mathcal{L}_{0.5} )  \geq \lambda_{\min}(\text{sym}(\mathcal{L}_{0.5} ))  >0$. As a result, $\mathcal{L}_{0.5}$ is actually a strong $M$-tensor and the algorithms in \citep[][among others]{ding2016solving, rheinboldt1998methods,wang2019neural} can be used to solve the discretized multilinear system.
\end{example}

We subsequently explore further applications within the context of high-order Markov chains~\citep{chung1967markov}.

\begin{example}[Application in high-order Markov chains]
	\label{example:markov_chain}
	The authors in~\citep{li2014limiting} introduce an approximated high-order Markov chain model, described as follows.
	
	\begin{equation}
		\label{eq:mark_chain_main}
		\mathcal{P}x^{m-1} = x, \quad ||x||_{1} = 1,
	\end{equation}
	where $\mathcal{P} = (p_{i_{1}i_{2} \cdots i_{m}}) \in \mathbb{T}_{m,n}$ representing an $(m-1)$th order Markov chain, which is called an $m$-order $n$-dimensional transition probability tensor. That is,
	
	\begin{equation}
		p_{i_{1}i_{2}\cdots i_{m}} \geq 0, \quad \sum_{i_{1} \in [n]} p_{i_{1}i_{2}\cdots i_{m}} = 1,
	\end{equation}
	and $x$ is called a vector of random variables with $x_{i} \geq 0$ and $\sum_{i \in [n]} x_{i} = 1$. It was pointed out in~\citep{liu2018tensor} that the nonlinear system~\eqref{eq:mark_chain_main} is equivalent to the following equation:
	
	\begin{equation}
		\label{eq:mark_chain_main2}
		\begin{cases}
			(\mathcal{I} - \beta \mathcal{P})x^{m-1} = x^{[m-1]} - \beta x, \\
			||x||_{1} = 1.
		\end{cases}
	\end{equation}	
	If we choose the parameter $\beta$ such that $\mathcal{I} - \beta \mathcal{P}$ is a strong $M$-tensor, then we may use the tensor splitting method proposed in~\citep{liu2018tensor} to solve the nonlinear equation.
	We take two examples $\mathcal{P}_1$ and $\mathcal{P}_2$ in~\citep{liu2018tensor,li2014limiting}; also see~\citep{raftery1985model}. The first example $\mathcal{P}_1$ pertains to inter-personal relationships, while the second $\mathcal{P}_2$ is derived from occupational mobility data for physicists. The two transition probability tensors are defined by 
	\begin{itemize}
		\item[]	\begin{align*}  &\mathcal{P}_1(:,:,1) = 
			\begin{pmatrix}
				0.5810 & 0.2432 & 0.1429 \\
				0 & 0.4109 & 0.0701 \\
				0.4190 & 0.3459 & 0.7870 
			\end{pmatrix}, 
			\mathcal{P}_1(:,:,2) = 
			\begin{pmatrix}
				0.4708 & 0.1330 & 0.0327 \\
				0.1341 & 0.5450 & 0.2042 \\
				0.3951 & 0.3220 & 0.7631 
			\end{pmatrix}, \\
			&\mathcal{P}_1(:,:,3) = 
			\begin{pmatrix}
				0.4381 & 0.1003 & 0 \\
				0.0229 & 0.4338 & 0.0930 \\
				0.5390 & 0.4659 & 0.9070 
			\end{pmatrix}. 
		\end{align*}
		\item[]  \begin{align*}	&\mathcal{P}_2(:,:,1) = 
			\begin{pmatrix}
				0.9000 & 0.3340 & 0.3106 \\
				0.0690 & 0.6108 & 0.0754 \\
				0.0310 & 0.0552 & 0.6140 
			\end{pmatrix},
			\mathcal{P}_2(:,:,2) = 
			\begin{pmatrix}
				0.6700 & 0.1040 & 0.0805 \\
				0.2892 & 0.8310 & 0.2956 \\
				0.0408 & 0.0650 & 0.6239 
			\end{pmatrix}, \\
			&\mathcal{P}_2(:,:,3) = 
			\begin{pmatrix}
				0.6604 & 0.0945 & 0.0710 \\
				0.0716 & 0.6133 & 0.0780 \\
				0.2680 & 0.2922 & 0.8501 
			\end{pmatrix}.
		\end{align*}
	\end{itemize}
\end{example}
respectively. Their orders $m$ are both 3 and their numbers of states $n$ are both~3. It was pointed out in~\citep{li2014limiting} that the model~\eqref{eq:mark_chain_main} has a unique positive solution for the two transition probability tensors. Using Corollary~\ref{cor:Meigen}, we find that when $\beta = 0.3$, $\lambda_{\min}(\text{sym}(I - \beta \mathcal{P}_1)) = 0.081666 $ and $\lambda_{\min}(\text{sym}(I - \beta \mathcal{P}_2)) =0.099781 $ . Thus, using Lemma~\ref{lemma: sym_minimum_eigenvalue}, $\lambda_{\min}(I - \beta \mathcal{P})  \geq \lambda_{\min}(\text{sym}(I - \beta \mathcal{P} ))  >0$ for $\mathcal{P} = \mathcal{P}_1$ and $\mathcal{P} = \mathcal{P}_2$. The tensor splitting method proposed in~\citep{liu2018tensor} can be utilized to solve the nonlinear equation~\eqref{eq:mark_chain_main2} under these conditions.



\section{Minimum $H$-eigenvalue of the Fan product of symmetric $M$-tensors}
\label{section:fan_prod}
For two tensors $\mathcal{A} = (a_{i_1 i_2 \dots i_m}) \in \mathbb{T}_{m,n}$ and $\mathcal{B} = (b_{i_1 i_2 \dots i_m}) \in \mathbb{T}_{m,n}$, their Fan product $\mathcal{A} \star \mathcal{B}$ is a tensor defined by 
\begin{equation}
	(\mathcal{A} \star \mathcal{B})_{i_1i_2\dots i_m} = (-1)^{\delta_{i_1i_2 \dots i_m} + 1} a_{i_1 i_2 \dots i_m}b_{i_1 i_2 \dots i_m},
\end{equation}
where $\delta_{i_1i_2 \dots i_m}=1$ if $i_1 = i_2 = \cdots = i_m$, and $\delta_{i_1i_2 \dots i_m} = 0$ otherwise. 

One of the main characteristics of the Fan product is that the Fan product of $M$-tensors is also an $M$-tensor \citep{shen2019some}. The authors in \citep{cheng2014new,fang2007bounds,shen2019some} propose bounds for the minimum $H$-eigenvalue of $\mathcal{A} \star \mathcal{B}$ where $\mathcal{A}$ and $\mathcal{B}$ are $Z$-matrices ($Z$-tensors). With the help of the proposed new characterisation of symmetric $M$-tensors, we provide tighter bounds for the minimum $H$-eigenvalue of the Fan product of symmetric $M$-tensors. Specifically, we show both theoretically and empirically that our proposed lower bounds are tighter than any of the bounds provided in \citep{shen2019some}.

For a symmetric $M$-tensor $\mathcal{A}$, it follows from Theorem \ref{theo:gdd_decomp} and \ref{theo:gdd_characterization} and Corollary~\ref{cor:Meigen} that one can write $\mathcal{A}$ as 
\begin{equation}
	\label{eq:tensor_decomp_A}
	\mathcal{A} =  \lambda_{\min}(\mathcal{A})  \mathcal{I} +  \sum_{ \vec{i} \in \mathscr{D}_n^m} \mathcal{A}^{\vec{i}},
\end{equation}
where 
$\mathcal{A}^{\vec{i}} = (a^{\vec{i}}_{j_1 j_2 \dots j_m}) \in GDD^+_{m,n} \cap \mathbb{D}^{\ii}_{m,n}$, for all $\vec{i} \in \mathscr{D}_n^m$. 
Similarly, for another symmetric $M$-tensor $\mathcal{B}$, we can also decompose it as 
\begin{equation}
	\label{eq:tensor_decomp_B}
	\mathcal{B} =  \lambda_{\min}(\mathcal{B})  \mathcal{I} +  \sum_{\vec{i} \in \mathscr{D}_n^m} \mathcal{B}^{\vec{i}},
\end{equation}
where 
$\mathcal{B}^{\vec{i}} = (b^{\vec{i}}_{j_1 j_2 \dots j_m}) \in GDD^+_{m,n} \cap \mathbb{D}^{\ii}_{m,n}$, for all $\vec{i} \in \mathscr{D}_n^m$. From Theorem \ref{th:check_gdd}, the decomposition of $\mathcal{A}$ and $\mathcal{B}$ can be done in polynomial time.

Using this decomposition, we obtain tighter lower bounds of the minimum $H$-eigenvalue for the Fan product of two symmetric $M$-tensors. Before presenting the lower bounds, we need the following result regarding the minimum $H$-eigenvalue of $M$-tensors. 
\begin{lemma}[{\citep[][Lem.~2.2]{shen2019some}}]
	If $\mathcal{A} = (a_{i_1\dots i_m}) \in \mathbb{T}_{m,n}$ is an $M$-tensor, then
	\begin{equation*}
		\min_{1\leq i\leq n} \frac{(\mathcal{A} x^{m-1})_i}{x_i^{m-1}} \leq \lambda_{\min}(\mathcal{A})
	\end{equation*}
	for any $x = (x_1, \dots, x_n)^\intercal \in \mathbb{R}^n_{++}$.
\end{lemma}

For $M$-tensor $\mathcal{A} = (a_{i_1\dots i_m}) \in \mathbb{S}_{m,n}$ and $\mathcal{B} = (b_{i_1\dots i_m}) \in \mathbb{S}_{m,n}$ and decomposition \eqref{eq:tensor_decomp_A}  and \eqref{eq:tensor_decomp_B}, let
\begin{flalign}
	\Omega_1 (\mathcal{A}, \mathcal{B})= \min_{1 \leq i \leq n} \left(a_{ii\dots i} b_{ii\dots i} -  \sum_{(i,i_2,\dots, i_m)  \in \mathscr{D}_n^m}  \frac{ 1}{\binom{m-1}{\alpha^{\vec{i}} -e_i}} {a}_{ii \dots     i}^{i,i_2,\dots, i_m } {b}_{ii \dots     i}^{i,i_2,\dots, i_m } \right) \label{eq:Fan_prod_lb1},
\end{flalign}
\begin{flalign}
	\Omega_2 (\mathcal{A}, \mathcal{B})= \min_{1 \leq i \leq n} \left(a_{ii\dots i} b_{ii\dots i} -  \sum_{(i,i_2,\dots, i_m)  \in \mathscr{D}_n^m}   |{a}_{i i_2\dots i_m} | {b}_{ii \dots     i}^{i,i_2,\dots, i_m } \right) \label{eq:Fan_prod_lb2},
\end{flalign}
\begin{flalign}
	\Omega_2 (\mathcal{B}, \mathcal{A})= \min_{1 \leq i \leq n} \left(a_{ii\dots i} b_{ii\dots i} -  \sum_{(i,i_2,\dots, i_m)  \in \mathscr{D}_n^m}   |{b}_{i i_2\dots i_m} | {a}_{ii \dots     i}^{i,i_2,\dots, i_m } \right) \label{eq:Fan_prod_lb21},
\end{flalign}
and 

\begin{flalign}
	& \Omega_3 (\mathcal{A}, \mathcal{B})= \min_{1 \leq i \leq n} \bigg(a_{ii\dots i} b_{ii\dots i} -  \nonumber \\
	& \qquad \sum_{(i,i_2,\dots, i_m)\in \mathscr{D}_n^m}   \big(|{a}_{i i_2\dots i_m}|)^{\frac{1}{2}} (|{b}_{i i_2\dots i_m}|)^{\frac{1}{2}} ({a}_{ii \dots     i}^{i,i_2,\dots, i_m })^{\frac{1}{2}}  ({b}_{ii \dots     i}^{i,i_2,\dots, i_m })^{\frac{1}{2}} \bigg) \label{eq:Fan_prod_lb3}.
\end{flalign}

Let $\alpha_i(\mathcal{A}) = \max_{(i_2,\dots , i_m) \neq (i, \dots, i)} |a_{i i_2 \dots i_m}|$ for $i \in [n]$,
\begin{equation}
	\Gamma_1(\mathcal{A}, \mathcal{B}) = \min_{1 \leq i \leq n}\{a_{ii \dots i} \lambda_{\min}(\mathcal{B}) + b_{ii \dots i}\lambda_{\min}(\mathcal{A})  \} - \lambda_{\min}(\mathcal{A}) \lambda_{\min}(\mathcal{B}),
	\label{eq:Fan_prod_lb1_comp}
\end{equation}
\begin{equation}
	\Gamma_2(\mathcal{A}, \mathcal{B}) = \min_{1\leq i \leq n} \{ a_{ii\dots i} b_{ii\dots i} - \alpha_i(\mathcal{A})(b_{ii \dots i} -\lambda_{\min}(\mathcal{B}))  \},
	\label{eq:Fan_prod_lb2_comp}
\end{equation}
\begin{equation}
	\Gamma_2(\mathcal{B}, \mathcal{A}) = \min_{1\leq i \leq n} \{ a_{ii\dots i} b_{ii\dots i} - \alpha_i(\mathcal{B})(a_{ii \dots i} -\lambda_{\min}(\mathcal{A}))  \},
	\label{eq:Fan_prod_lb21_comp}
\end{equation}
and
\begin{equation}
	\begin{aligned}
		&\Gamma_3(\mathcal{A}, \mathcal{B}) \\
		=& \min_{1\leq i \leq n} \Bigg\{  a_{ii\dots i} b_{ii\dots i} \\
		& - (\alpha_i(\mathcal{A}))^{\frac{1}{2}} (\alpha_i(\mathcal{B}))^{\frac{1}{2}} (a_{ii \dots i} - \lambda_{\min}(\mathcal{A}))^{\frac{1}{2}} (b_{ii \dots i} - \lambda_{\min}(\mathcal{B}))^{\frac{1}{2}}  \Bigg\}.
	\end{aligned}
	\label{eq:Fan_prod_lb3_comp}
\end{equation}
Expressions \eqref{eq:Fan_prod_lb1_comp}, \eqref{eq:Fan_prod_lb2_comp}, \eqref{eq:Fan_prod_lb21_comp} and \eqref{eq:Fan_prod_lb3_comp} are proposed in \citep{shen2019some} as lower bounds for the minimum $H$-eigenvalue of the Fan product of $M$-tensors. In Theorem \ref{them:Fan_prod_lb}, we prove that the expressions~\eqref{eq:Fan_prod_lb1},~\eqref{eq:Fan_prod_lb2},~\eqref{eq:Fan_prod_lb21} and~\eqref{eq:Fan_prod_lb3} tighten these lower bounds. As it will be illustrated in Table \ref{tab:fan_prod}, which of the expression~\eqref{eq:Fan_prod_lb1},~\eqref{eq:Fan_prod_lb2},~\eqref{eq:Fan_prod_lb21} and~\eqref{eq:Fan_prod_lb3} provides the best lower bound depends on the specific tensors being considered.
\begin{theorem}
	\label{them:Fan_prod_lb}
	For symmetric $M$-tensors $\mathcal{A}$ and $\mathcal{B}$,
	\begin{enumerate}[label = (\roman*)]
		\item \label{label:lb1} $\Gamma_1(\mathcal{A}, \mathcal{B}) \leq  \Omega_1 (\mathcal{A}, \mathcal{B})\leq  \lambda_{\min}(\mathcal{A} \star \mathcal{B})  $ 
		\item \label{label:lb2}  $\Gamma_2(\mathcal{A}, \mathcal{B}) \leq  \Omega_2 (\mathcal{A}, \mathcal{B}) \leq  \lambda_{\min}(\mathcal{A} \star \mathcal{B}) $,   $\Gamma_2(\mathcal{B}, \mathcal{A}) \leq  \Omega_2 (\mathcal{B}, \mathcal{A}) \leq  \lambda_{\min}(\mathcal{A} \star \mathcal{B}) $
		\item   \label{label:lb3}  $\Gamma_3(\mathcal{A}, \mathcal{B}) \leq  \Omega_3 (\mathcal{A}, \mathcal{B}) \leq  \lambda_{\min}(\mathcal{A} \star \mathcal{B}) $ 
	\end{enumerate}
\end{theorem}
\begin{proof}
	First, for symmetric $M$-tensors $\mathcal{A}$ and $\mathcal{B}$, one can derive the decompositions given in \eqref{eq:tensor_decomp_A} and \eqref{eq:tensor_decomp_B}. For each $\vec{i} = (i_1, i_2, \dots ,i_m)  \in \mathscr{D}_n^m$ with $\alpha^{\vec{i}}$ as the tight power, since $\mathcal{A}^{\vec{i}} = (a^{\vec{i}}_{j_1 j_2 \dots j_m}) \in GDD^+_{m,n} \cap \mathbb{D}^{\ii}_{m,n}$, there exist $u_i > 0$ for $i \in [n]$ such that
	\begin{flalign}
		{a}_{ii \cdots     i}^{\vec{i}} u_i^m \geq \binom{m-1}{\alpha^{\vec{i}} -e_i} |  {a}_{\vec{i}}^{\vec{i}} u_{i_1} \cdots u_{i_m}|.
	\end{flalign}
	Similarly, there exist $v_i > 0$ for $i \in [n]$ such that
	\begin{flalign}
		{b}_{ii \cdots     i}^{\vec{i}} v_i^m \geq \binom{m-1}{\alpha^{\vec{i}} -e_i} |  {b}_{\vec{i}}^{\vec{i}} v_{i_1} \cdots v_{i_m}|.
	\end{flalign}
	To show \ref{label:lb1}, let $z = (z_1, z_2,\dots, z_n)^\intercal \in \mathbb{R}^n_{++}$, where $z_i = u_iv_i$ for all $i \in [n]$. Then for $i \in [n]$,
	\begin{flalign*}
		&\frac{((\mathcal{A}\star \mathcal{B} )z^{m-1})_i}{z_i^{m-1}} \\
		=& a_{ii \cdots i} b_{ii \cdots i} -  \frac{1}{z_i^{m-1}} \left( \sum_{( i_2, \dots, i_m) \neq (i,\dots,i)} a_{ii_2 \dots i_m}b_{i i_2\dots i_m}z_{i_2} \cdots z_{i_m} \right) \\
		=& a_{ii \cdots i} b_{ii \cdots i} -   \frac{1}{(u_i v_i)^{m-1}}\sum_{(i_2, \dots, i_m) \neq (i,i,\dots,i)} a_{ii_2 \dots i_m}^{i, i_2, \dots, i_m}u_{i_2} \cdots u_{i_m}b_{i i_2 \dots i_m}^{i, i_2, \dots, i_m}v_{i_2} \cdots v_{i_m}\\
		\geq &a_{ii \cdots i} b_{ii \cdots i} - \left(  \sum_{(i,i_2,\dots, i_m)  \in \mathscr{D}_n^m}  \frac{ 1}{\binom{m-1}{\alpha^{\vec{i}} -e_i}} {a}_{ii \cdots     i}^{i,i_2,\dots, i_m } {b}_{ii \cdots     i}^{i,i_2,\dots, i_m }  \right) \\
		\geq & a_{ii \cdots i} b_{ii \cdots i} - \left(  \sum_{(i,i_2,\dots, i_m)  \in \mathscr{D}_n^m}  {a}_{ii \cdots     i}^{i,i_2,\dots, i_m } {b}_{ii \cdots     i}^{i,i_2,\dots, i_m }  \right)  \\
		\geq & a_{ii \cdots i} b_{ii \cdots i} -\left(  \sum_{(i,i_2,\dots, i_m)  \in \mathscr{D}_n^m}  {a}_{ii \cdots     i}^{i,i_2,\dots, i_m }  \right)  \left( \sum_{(i,i_2,\dots, i_m)  \in \mathscr{D}_n^m} {b}_{ii \cdots     i}^{i,i_2,\dots, i_m }   \right) \\
		= &  a_{ii \cdots i} b_{ii \cdots i} - (a_{ii \cdots i } -  \lambda_{\min}(\mathcal{A})) (b_{ii \cdots i }- \lambda_{\min}(\mathcal{B})) \\
		=  & a_{ii \dots i} \lambda_{\min}(\mathcal{B}) + b_{ii \dots i}\lambda_{\min}(\mathcal{A}) - \lambda_{\min}(\mathcal{A}) \lambda_{\min}(\mathcal{B}).
	\end{flalign*}
	The second to last equality follows from the decompositions \eqref{eq:tensor_decomp_A} and \eqref{eq:tensor_decomp_B}.
	Thus, it follows that 
	\begin{equation*}
		\Gamma_1(\mathcal{A}, \mathcal{B})  \leq \Omega_1 (\mathcal{A}, \mathcal{B})\leq  \min_{1\leq i\leq n} \frac{((\mathcal{A}\star \mathcal{B} )z^{m-1})_i}{z_i^{m-1}} \leq  \lambda_{\min}(\mathcal{A} \star \mathcal{B}) .
	\end{equation*}
	To show \ref{label:lb2}, let $z = (z_1, z_2,\dots, z_n)^\intercal \in \mathbb{R}^n_{++}$, where $z_i = u_i$ for all $i \in [n]$. Then for $i \in [n]$,
	\begin{flalign*}
		&\frac{((\mathcal{A}\star \mathcal{B} )z^{m-1})_i}{z_i^{m-1}} \\
		=& a_{ii \cdots i} b_{ii \cdots i} -  \frac{1}{u_i^{m-1}} \left( \sum_{( i_2, \dots, i_m) \neq (i,\dots,i)} a_{i i_2 \dots i_m}b_{i i_2 \dots i_m}u_{i_2} \cdots u_{i_m} \right) \\
		\geq & a_{ii \cdots i} b_{ii \cdots i} - \left( \sum_{(i,i_2,\dots, i_m)  \in \mathscr{D}_n^m}  {a}_{ii \cdots     i}^{i,i_2,\dots, i_m }  |b_{i i_2 \dots i_m}|  \right) \\
		\geq &a_{ii \cdots i} b_{ii \cdots i} - \alpha_i(\mathcal{B})  \left( \sum_{(i,i_2,\dots, i_m)  \in \mathscr{D}_n^m}  {a}_{ii \cdots     i}^{i,i_2,\dots, i_m }    \right) \\
		=  & a_{ii \cdots i} b_{ii \cdots i} - \alpha_i(\mathcal{B})  \left(  a_{ii\cdots i} - \lambda_{\min}(\mathcal{A})   \right). 
	\end{flalign*}
	Thus,
	\begin{equation}
		\label{eq:Fan_prod_lb_2_1}
		\Gamma_2(\mathcal{B}, \mathcal{A}) \leq  \Omega_2 (\mathcal{B}, \mathcal{A}) \leq  \min_{1\leq i\leq n} \frac{((\mathcal{A}\star \mathcal{B} )z^{m-1})_i}{z_i^{m-1}} \leq 	  \lambda_{\min}(\mathcal{A} \star \mathcal{B}) .
	\end{equation}
	Similarly, it then follows that
	\begin{equation*}
		\Gamma_2(\mathcal{A}, \mathcal{B}) \leq  \Omega_2 (\mathcal{A}, \mathcal{B}) \leq  \min_{1\leq i\leq n} \frac{((\mathcal{A}\star \mathcal{B} )z^{m-1})_i}{z_i^{m-1}} \leq 	  \lambda_{\min}(\mathcal{A} \star \mathcal{B})
	\end{equation*}
	by setting $z_i = v_i$ for all $i \in [n]$ in the proof of \eqref{eq:Fan_prod_lb_2_1}.
	
	To show \ref{label:lb3}, let $z = (z_1, z_2,\dots, z_n)^\intercal \in \mathbb{R}^n_{++}$, where $z_i = (u_i)^{\frac{1}{2}} (v_i)^{\frac{1}{2}}$ for all $i \in [n]$. Then for $i  \in [n]$,
	\begin{flalign*}
		& \frac{((\mathcal{A}\star \mathcal{B} )z^{m-1})_i}{z_i^{m-1}}  \\
		=& a_{ii \cdots i} b_{ii \cdots i} -  \frac{1}{(u_i v_i)^{\frac{m-1}{2}}}  \sum_{( i_2, \dots, i_m) \neq (i,\dots,i)} a_{i i_2 \dots i_m}u_{i_2}^{\frac{1}{2}} \cdots u_{i_m}^{\frac{1}{2}}b_{i i_2 \dots i_m}v_{i_2}^{\frac{1}{2}} \cdots v_{i_m}^{\frac{1}{2}}\\
		\geq & a_{ii \cdots i} b_{ii \cdots i} -    \sum_{(i,i_2,\dots, i_m)  \in \mathscr{D}_n^m} |a_{i i_2 \dots i_m}|^\frac{1}{2} (a_{ii \cdots i}^{i,i_2,\dots, i_m })^\frac{1}{2}  |b_{i i_2 \dots i_m}|^\frac{1}{2} (b_{ii \cdots i}^{i,i_2,\dots, i_m })^\frac{1}{2}  \\ 
		&\text{in what follows,  notice that we use $\newi := (i, i_2, \cdots, i_m)$ for ease of presentation} \\
		\geq & a_{ii \cdots i} b_{ii \cdots i} -   \left( \sum_{\newi := (i,i_2,\dots, i_m)  \in \mathscr{D}_n^m} |a_{\newi }| a_{ii \cdots i}^{\newi  }  \right)^{\frac{1}{2}}   \left( \sum_{\newi :=(i,i_2,\dots, i_m)  \in \mathscr{D}_n^m}|b_{\newi }|b_{ii \cdots i}^{\newi } \right)^{\frac{1}{2}}\\
		\geq & a_{ii \cdots i} b_{ii \cdots i} -    \left( \sum_{\newi :=(i,i_2,\dots, i_m)  \in \mathscr{D}_n^m}   \alpha_i(\mathcal{A}) a_{ii \cdots i}^{\newi  }  \right)^{\frac{1}{2}}   \left( \sum_{\newi :=(i,i_2,\dots, i_m)  \in \mathscr{D}_n^m} \alpha_i(\mathcal{B})b_{ii \cdots i}^{\newi  } \right)^{\frac{1}{2}}\\
		= & a_{ii \cdots i} b_{ii \cdots i} -  (\alpha_i(\mathcal{A}))^{\frac{1}{2}}(a_{ii \cdots i} -  \lambda_{\min}(\mathcal{A}) )^{\frac{1}{2}}  (\alpha_i(\mathcal{B}))^{\frac{1}{2}}(b_{ii \cdots i} -  \lambda_{\min}(\mathcal{B}) )^{\frac{1}{2}} \\
		= & \Gamma_3(\mathcal{A}, \mathcal{B}) . 
	\end{flalign*}
	The second inequality follows from the Cauchy–Schwarz inequality. Thus,
	\begin{equation*}
		\Gamma_3(\mathcal{A}, \mathcal{B}) \leq  \Omega_3 (\mathcal{B}, \mathcal{A}) \leq  \min_{1\leq i\leq n} \frac{((\mathcal{A}\star \mathcal{B} )z^{m-1})_i}{z_i^{m-1}} \leq 	  \lambda_{\min}(\mathcal{A} \star \mathcal{B}) .
	\end{equation*}
	
\end{proof}

To illustrate how the new bounds introduced in Theorem \ref{them:Fan_prod_lb} tighten the bounds introduced in \citep{shen2019some}, we compute bounds proposed here (i.e., \eqref{eq:Fan_prod_lb1},~\eqref{eq:Fan_prod_lb2}~\eqref{eq:Fan_prod_lb21} and~\eqref{eq:Fan_prod_lb3}) on the minimum $H$-eigenvalue of the Fan product of the symmetrized tensors in Example~3.9 in~\citep{shen2019some} and compare the results with the bounds (including \eqref{eq:Fan_prod_lb1_comp}, \eqref{eq:Fan_prod_lb2_comp}, \eqref{eq:Fan_prod_lb21_comp} and \eqref{eq:Fan_prod_lb3_comp}) proposed in~\citep{shen2019some}.

\begin{example} In this example, a tensor $\mathcal{A} = (a_{i_1 i_2 i_3 i_4}) \in \mathbb{R}^{[4,2]}$ is written in unfolded form as
	\begin{flalign*}
		\mathcal{A} = 	\left[
		\begin{array}{cc|cc|cc|cc}
			a_{1111} & a_{1211} & a_{1112} & a_{1212} & a_{1121}  & a_{1221} & a_{1122} & a_{1222} \\
			a_{2111} & a_{2211} & a_{2112} & a_{2212} & a_{2121}  & a_{2221} & a_{2122}& a_{2222}
		\end{array}
		\right].
	\end{flalign*}
	
	Symmetric $\mathcal{M}$-tensors $\mathcal{A}_i$, $\mathcal{B}_i \in \mathbb{R}^{[4,2]}$ for $i = 1,2,3$ are given as follows 
	\begin{flalign*}
		\mathcal{A}_1 = 	\left[
		\begin{array}{cc|cc|cc|cc}
			3 & -0.5 & -0.5 & 0 & -0.5  & 0 & 0 & -0.25 \\
			-0.5 & 0 & 0 & -0.25 & 0  & -0.25 & -0.25& 2
		\end{array}
		\right],
	\end{flalign*}
	\begin{flalign*}
		\mathcal{B}_1 = 	\left[
		\begin{array}{cc|cc|cc|cc}
			1.5 & -0.125 & -0.125 & 0 & -0.125  & 0 & 0 & -0.625 \\
			-0.125 & 0 & 0 & -0.625 & 0  & -0.625 & -0.625& 2.5
		\end{array}
		\right],
	\end{flalign*}
	\begin{flalign*}
		\mathcal{A}_2 = 	\left[
		\begin{array}{cc|cc|cc|cc}
			3.8 & -0.5 & -0.5 & -13/30 & -0.5  & -13/30 & -13/30 & -0.5 \\
			-0.5 & -13/30 & -13/30 & -0.5 & -13/30  & -0.5 & -0.5& 3.9
		\end{array}
		\right],
	\end{flalign*}
	\begin{flalign*}
		\mathcal{B}_2 = 	\left[
		\begin{array}{cc|cc|cc|cc}
			3.2 & -0.675 & -0.675 & -1/3 & -0.675  & -1/3 & -1/3 & -0.35 \\
			-0.675 & -1/3 & -1/3 & -0.35 & -1/3  & -0.35 & -0.35& 3.9
		\end{array}
		\right],
	\end{flalign*}
	\begin{flalign*}
		\mathcal{A}_3 = 	\left[
		\begin{array}{cc|cc|cc|cc}
			3.8 & -0.575 & -0.575 & -11/30 & -0.575  & -11/30 & -11/30 & -0.4 \\
			-0.575 & -11/30 & -11/30 & -0.4 & -11/30  & -0.4 & -0.4& 3.7
		\end{array}
		\right],
	\end{flalign*}
	\begin{flalign*}
		\mathcal{B}_3 = 	\left[
		\begin{array}{cc|cc|cc|cc}
			3.5 & -0.35 & -0.35 & -23/60 & -0.35  & -23/60 & -23/60 & -0.525 \\
			-0.35 & -23/60 & -23/60 & -0.525 & -23/60  & -0.525 & -0.525& 3.1
		\end{array}
		\right],
	\end{flalign*}
	
	Similar to Table~1 in~\citep{shen2019some}, Table~\ref{tab:fan_prod} shows the bounds for the minimum $H$-eigenvalue of the Fan product of $\mathcal{A}_i$ and~$\mathcal{B}_i$, $i = 1,2,3$ obtained with the expressions \eqref{eq:Fan_prod_lb1}, \eqref{eq:Fan_prod_lb2}, \eqref{eq:Fan_prod_lb21}, \eqref{eq:Fan_prod_lb3} and the lower bound expressions from~\citep{shen2019some}. For $\mathcal{A}_i$ and~$\mathcal{B}_i$, $i=1,2,3$, the lower bounds from~\eqref{eq:Fan_prod_lb1} are 4.0717, 10.8346 and 10.3187 respectively which are larger than the values from all the bounds proposed in~\citep{shen2019some}. Besides, the lower bounds from~\eqref{eq:Fan_prod_lb2},~\eqref{eq:Fan_prod_lb21} and~\eqref{eq:Fan_prod_lb3} are also tighter than the lower bounds from (3.4), (3.5) and (3.6)  (i.e., \eqref{eq:Fan_prod_lb2_comp}, \eqref{eq:Fan_prod_lb21_comp}, and\eqref{eq:Fan_prod_lb3_comp}) in~\citep{shen2019some}, respectively. This empirically validates the fact that the proposed bounds~\eqref{eq:Fan_prod_lb1}, \eqref{eq:Fan_prod_lb2}, \eqref{eq:Fan_prod_lb21}, \eqref{eq:Fan_prod_lb3} are tighter lower bounds for the minimum $H$-eigenvalue of the Fan product of two symmetric $M$-tensors. The proposed bounds \eqref{eq:Fan_prod_lb1}, \eqref{eq:Fan_prod_lb2}, \eqref{eq:Fan_prod_lb21} and \eqref{eq:Fan_prod_lb3} contain more information comparing the bounds proposed in \citep{shen2019some}. As a result, they are able to provide the tighter lower bounds. Note also that the expression among \eqref{eq:Fan_prod_lb1}, \eqref{eq:Fan_prod_lb2}, \eqref{eq:Fan_prod_lb21} and \eqref{eq:Fan_prod_lb3} that provides the best lower bound depends on the specific tensors. For example in the first column ($\mathcal{A}_1$ and $\mathcal{B}_1$), the best lower bound is given by expression~\eqref{eq:Fan_prod_lb2} while for the second column ($\mathcal{A}_2$ and $\mathcal{B}_2$) it is expression~\eqref{eq:Fan_prod_lb1}.

	\begin{table}[!htb]
		\begin{center}
			\begin{tabular}{lrrr}
				\toprule
				& $\mathcal{A} = \mathcal{A}_1, \mathcal{B} = \mathcal{B}_1$ &$\mathcal{A} = \mathcal{A}_2, \mathcal{B} = \mathcal{B}_2$ &$\mathcal{A} = \mathcal{A}_3, \mathcal{B} = \mathcal{B}_3$   \\
				\midrule
				$ \lambda_{\min}(\mathcal{A}) $  & 0.9723& 0.54995& 0.6970 \\
				$ \lambda_{\min}(\mathcal{B}) $  & 0.5000&0.41253 & 0.3717 \\
				$ \lambda_{\min}(\mathcal{A} \star \mathcal{B}) $  & 4.2762& 11.3818&12.0646  \\
				\midrule
				\multirow{2}{10 em}{Lower bounds on $ \lambda_{\min}(\mathcal{A} \star \mathcal{B})$ from  \citep{shen2019some} }  
				& &  & \\
				& &  & \\
				& &  & \\
				\midrule
				(3.1) in \citep{shen2019some} & 2.4722 & 3.1006   &  3.2768   \\
				(3.3) in \citep{shen2019some} \eqref{eq:Fan_prod_lb1_comp}  & 4.0000 &10.7663 &  9.9012 \\
				(3.4) in \citep{shen2019some} \eqref{eq:Fan_prod_lb2_comp} & 3.2327&9.9662 &  9.8934 \\
				(3.5) in \citep{shen2019some} \eqref{eq:Fan_prod_lb21_comp} & 4.0000&10.7663 & 9.9012   \\
				(3.6) in \citep{shen2019some} \eqref{eq:Fan_prod_lb3_comp} & 3.7040 & 10.4114 &  9.8973 \\
				(3.7) in \citep{shen2019some}  & 2.5000&10.2250 & 2.6294  \\
				\midrule
				\multirow{2}{10 em}{Proposed lower bounds on $ \lambda_{\min}(\mathcal{A} \star \mathcal{B})$ }  
				& &  & \\
				& &  & \\
				& &  & \\
				\midrule
				\eqref{eq:Fan_prod_lb1}  & 4.0717  & 10.8346 &  10.3187   \\
				\eqref{eq:Fan_prod_lb2}  & 4.1562  & 10.8177  &  10.3682  \\
				\eqref{eq:Fan_prod_lb21}  &4.0717 & 10.5605  &  10.1657  \\
				\eqref{eq:Fan_prod_lb3}  & 4.1169  & 10.6959  &  10.2691  \\
				\bottomrule
			\end{tabular}
	\end{center}
	\caption{Lower bounds for the minimum $H$-eigenvalues of the Fan product of symmetric $M$-tensors. \label{tab:bounds_asy_fan}}
	\label{tab:fan_prod}
\end{table}

\end{example}

\section{Conclusions}
\label{part:conclusion}

In this work, a new characterization of symmetric $H^+$-tensors is presented (see Corollary \eqref{col:fullGDDplus}). As a result of this characterization, it follows that one can identify whether a tensor is a symmetric $H^+$-tensor in polynomial time (see Theorem~\ref{th:check_gdd}). Comparing other characterizations which typically focus on sufficient conditions for a tensor to be an $H^+$-tensor, our characterization provides sufficient and necessary conditions. Besides, the set of symmetric $H^+$-tensors is described using tractable convex cones; in particular, the power cone.

We apply the new characterization of symmetric $H^+$-tensors in computing the minimum $H$-eigenvalue of symmetric $M$-tensors. 
In
particular, we compare the best bounds for the minimum $H$-eigenvalues proposed
in the related literature with these $H$-eigenvalues of symmetric $M$-tensors; which
can be computed in polynomial time by solving a power cone optimization problem.
We also show that this approach to computing $H$-eigenvalues of symmetric $M$-tensors is more efficient than using 
homotopy continuation type algorithms that allow the more general computation of complex generalized tensor eigenpairs~\citep{chen2016computing}.
Furthermore, we illustrate how this new characterization of symmetric $H^+$-tensors can be used to obtain tighter lower bounds for the minimum $H$-eigenvalue of the Fan product of two symmetric $M$-tensors. We show both theoretically and empirically that the proposed bounds are tighter compared to the bounds proposed in \citep{shen2019some}. We illustrate the relevance of our results with practical examples drawn from polynomial optimization, hypergraphs analysis, complementarity problems, multilinear systems, and high-order Markov chains. Besides these applications, we believe more interesting results can be obtained with the proposed new characterization of symmetric $H^+$-tensors and $M$-tensors.


%
%



%
\bibliography{references}
\bibliographystyle{plainnat}

\end{document}